\def \ra {\rightarrow}
\def \E {\mathbb{E}}
\def \a {\alpha}
\def \be {\beta}
\newtheorem{definition}{\bf Definition}
	\newtheorem{remark}{\bf Remark}
	\newtheorem{theorem}{\bf Theorem}
	\newtheorem{prop}[theorem]{\bf Proposition}
	\newtheorem{lem}[theorem]{\bf Lemma}
	\newtheorem{as}[theorem]{\bf Assumption}
\begin{document}
	
\title{\bfseries Optimal Feedback Control in Social Networks in a McKean-Vlasov-Friedkin-Johnsen System}

\author{
	Paramahansa Pramanik\footnote{Corresponding author, {\small\texttt{ppramanik@southalabama.edu}}}\; \footnote{Department of Mathematics and Statistics,  University of South Alabama, Mobile, AL, 36688,
		United States.}
}

\date{\today}
\maketitle

\begin{abstract}
This paper presents a comprehensive analytical formulation for deriving a closed-form optimal strategy for agents operating within a social network, modeled through a  McKean-Vlasov stochastic differential equation (SDE). Each agent aims to minimize a personal dynamic cost functional that accounts for deviations from the collective opinions of others, their own past beliefs, and is influenced by randomness and inherent opinion rigidity, often described as stubbornness. To tackle this, we develop a novel methodology rooted in a Feynman-type path integral framework, incorporating a specially designed integrating factor to obtain explicit feedback control laws. This approach provides a tractable and insightful solution to the control problem in a setting shaped by both memory and noise. As part of our analysis, we adopt a modified form of the Friedkin-Johnsen opinion dynamics model to more accurately capture the influence of prior beliefs and social interactions, enabling the explicit derivation of the optimal strategy. Comparative simulations further illustrate the effectiveness and adaptability of our method across different network structures, highlighting its potential relevance to understanding opinion evolution and influence strategies in complex social systems.
\end{abstract}

\subparagraph{Key words:}
Opinion dynamics; McKean-Vlasov SDE; path integral control; Friedkin-Johnsen model.

\section{Introduction.}
In recent years, the literature of opinion dynamics has garnered considerable attention, primarily due to its ability to model and explain the evolution of individual and collective beliefs within interconnected social systems \citep{acemoglu2011opinion}. This domain focuses on how a population’s beliefs, often represented as scalar values or multidimensional vectors, shift and adapt over time through ongoing interactions among agents within a network. These belief adjustments are frequently modeled as movements toward a weighted average of others’ opinions, thereby reflecting the influence of social interactions and relational structure \citep{pramanik2021consensus}. Classic continuous opinion dynamics models often assume that, in a connected network, the interplay of influence across agents inevitably leads to a consensus, an eventual alignment of beliefs across the entire population \citep{stella2013opinion}. Yet, this theoretical consensus is challenged under alternative modeling frameworks such as bounded confidence models, where agents selectively interact only with others whose beliefs lie within a pre-defined tolerance band. These models highlight how real-world belief formation is constrained by cognitive and psychological biases, leading individuals to ignore opinions that differ too greatly from their own. Furthermore, another significant deviation from consensus models emerges when agents possess a certain level of stubbornness, rendering them resistant to external influence. These ``stubborn agents" may function as political leaders, media outlets, or ideologically rigid individuals who maintain fixed beliefs while continuing to exert influence on the broader network \citep{acemouglu2013opinion}. Such models are not only more reflective of real-world phenomena but also introduce mathematical complexity by inhibiting full convergence and fostering persistent belief heterogeneity.

As opinion dynamics models scale to larger populations, researchers have turned to the study of mean-field limits to understand emergent behavior in the thermodynamic limit. Specifically, in homogeneous agent settings, as the number of interacting individuals grows, the stochastic evolution of their opinions tends to converge toward a deterministic process described by a mean-field partial differential equation defined over the space of probability measures. This convergence is consistent with the propagation of chaos phenomenon observed in interacting particle systems, where the behavior of any finite subset of agents becomes asymptotically independent from the rest, conditioned on the mean-field \citep{stella2013opinion}. These results provide powerful analytical tools for approximating the macroscopic evolution of beliefs from microscopic interaction rules. Empirical and simulation-based studies further validate these theoretical outcomes by demonstrating how large-scale social influence often results in patterns such as consensus, polarization, or fragmentation, depending on interaction topology and initial opinion distribution \citep{castellano2009statistical}. Notably, global or uniform interactions often facilitate consensus, while localized interactions tend to generate opinion clusters, reflecting ideological segmentation commonly observed in social networks. This segmentation arises when agents are limited to engaging with like-minded peers, leading to groupings of similar beliefs and the marginalization of dissenting voices. To further analyze these phenomena, researchers have developed sophisticated models such as the Lagrangian framework for dissensus, which utilizes graph-theoretic constructs and stochastic stability analysis to examine how structural properties of networks and noise levels influence the persistence of divergent beliefs \citep{bauso2016opinion}. These theoretical advancements contribute significantly to our understanding of how opinions evolve, stabilize, or diverge in complex social systems, offering valuable insights for applications ranging from political campaign strategies to information dissemination and collective decision-making.

Social networks play a fundamental role in shaping a wide range of individual behaviors and socioeconomic outcomes. Empirical research has consistently demonstrated that network structures significantly impact educational achievements \citep{calvo2009}, access to employment opportunities \citep{calvo2004}, adoption of new technologies \citep{conley2010}, consumer behavior \citep{moretti2011}, and even health-related habits such as smoking \citep{nakajima2007,sheng2020}. These pervasive influences stem from the fact that social networks are not exogenously given but are endogenously formed through the decisions of individual agents. As such, a thorough understanding of consensus the tendency of beliefs or actions to align across agents is essential to uncovering the mechanisms behind network formation and its broader implications. While the theoretical underpinnings of social networks have been extensively explored, the specific treatment of consensus formation as a Nash equilibrium in stochastic networks remains underdeveloped in the literature. In addressing this gap, \citet{sheng2020} conceptualizes network formation as a simultaneous-move game in which agents strategically choose their social ties based on the utility derived from both direct and indirect connections. This framework reflects the endogenous nature of network development and aligns with rational choice theory, which posits that individuals form links not arbitrarily but to maximize their expected payoffs. Furthermore, \citet{sheng2020} contributes to the empirical analysis of social networks by proposing a method that allows for the partial identification of large-scale network structures in a computationally tractable manner, thereby overcoming some of the scalability limitations often associated with network inference.

The empirical study of network formation has deep historical roots, beginning with the seminal work of \citet{erdos1959}, who introduced the concept of random graphs composed of independently formed links with a constant probability. The Erdős--Rényi model laid the groundwork for probabilistic modeling of networks, but its simplistic assumptions have spurred the development of more complex models capable of generating graphs that resemble real-world networks. These alternative models aim to capture features such as heterogeneous degree distributions, clustering, and short average path lengths, hallmarks of empirical social networks often described as small-world or scale-free. In particular, model-based approaches are considered valuable not only for simulating realistic network topologies but also for enabling empirical analysis, provided that the models are amenable to estimation and inference. One widely adopted framework in this regard is the exponential random graph model (ERGM), which can be parameterized to match observed network statistics and thus reproduce key structural characteristics of empirical networks \citep{snijders2002,hua2019,polansky2021motif}. Despite its flexibility, ERGM suffers from a lack of microfoundations, making it ill-suited for conducting counterfactual analysis, a major limitation from an economic perspective, where individual decision-making and utility maximization are central assumptions. To address this, alternative approaches have been proposed that treat networks as outcomes of stochastic processes governed by underlying probabilistic rules. These process-based models shift the analytical focus from individual agents to the evolution of network structures over time, emphasizing the estimation of parameters that define the transition dynamics rather than the properties of specific network realizations \citep{polansky2021motif}. Together, these modeling paradigms underscore the importance of integrating both structural realism and theoretical rigor in the study of social networks, particularly when exploring equilibrium behavior such as consensus formation within dynamic and stochastic environments.

Figure \ref{fig:ergm_combined} comprises two distinct visual representations designed to illustrate the structural and dynamical properties of ERGMs under the influence of stochastic opinion dynamics. The left subfigure displays a densely connected network generated from an ERGM, with opinion evolution governed by a stochastic McKean-Vlasov SDE incorporating Friedkin-Johnsen-type feedback. Each vertex represents an agent whose opinion evolves as a solution to a controlled stochastic differential equation. These dynamics are shaped by three principal forces: the statistical mean-field arising from the population's distribution, memory of historical beliefs, and direct local interactions with neighboring agents. The presence of heterogeneity is explicitly encoded in the graph: vertices are visually differentiated by color and size, where node color distinguishes standard agents from stubborn or opinion-fixed nodes, and node size may encode influence weight or degree centrality. Edges denote bilateral influence channels and are assumed to be undirected for modeling symmetric communication or interaction effects.

This particular realization reflects a regime in which central nodes are densely interconnected, forming a core-periphery structure. The inner core facilitates rapid belief convergence among centrally located agents, while peripheral nodes interact less frequently and may exhibit delayed convergence or even persistent disagreement. The observable topology is consistent with known statistical properties of ERGMs designed to match empirical network statistics, such as clustering coefficients and degree distributions.

\begin{figure}[H]
	\centering
	\includegraphics[width=\textwidth]{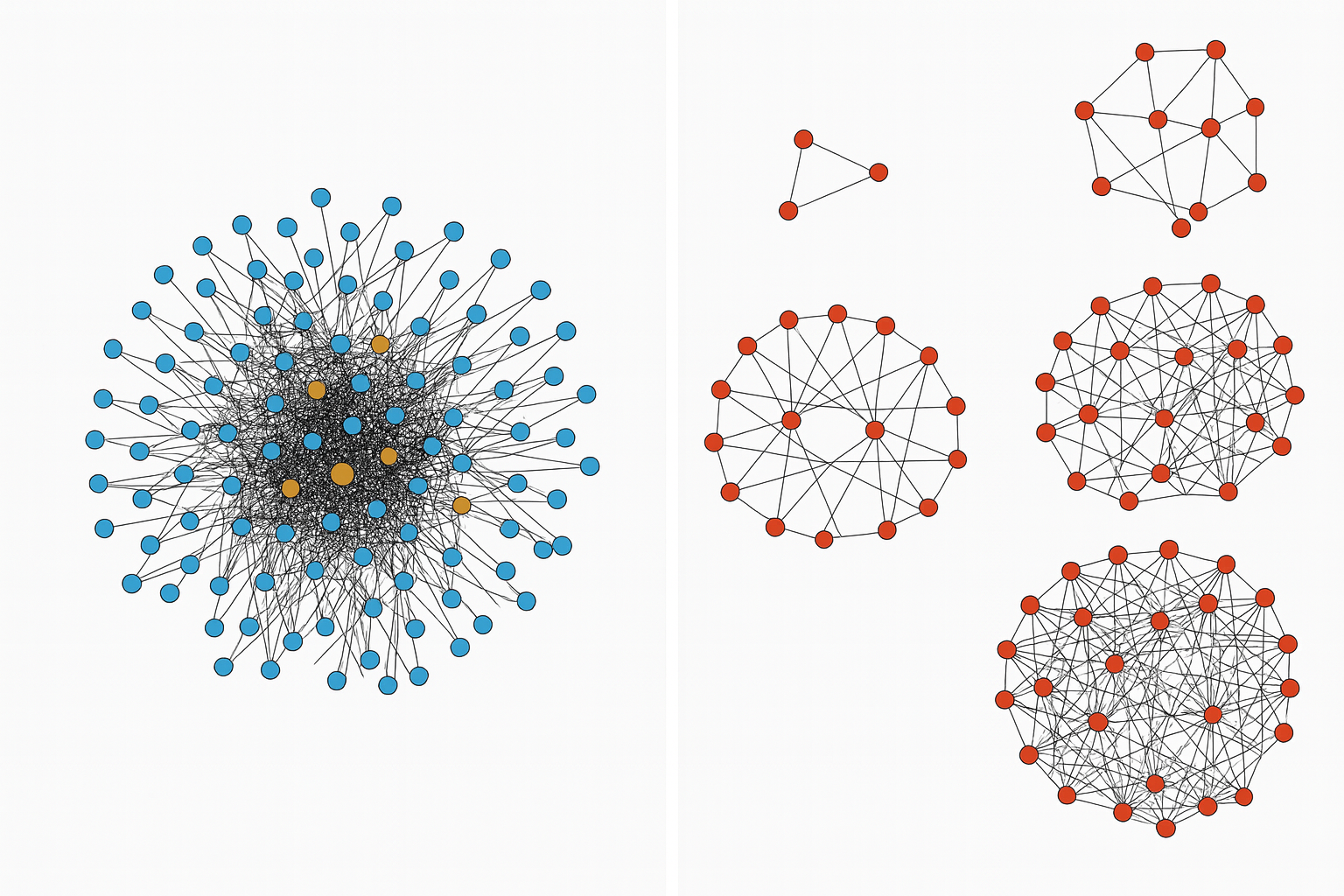}
	\caption{The left panel illustrates a dense network generated under McKean-Vlasov-Friedkin-Johnsen dynamics, where node size and color reflect opinion influence and stubbornness. The right panel shows four smaller ERGM networks with increasing numbers of vertices, demonstrating the structural and topological evolution of social interaction networks.}
	\label{fig:ergm_combined}
\end{figure}

\noindent The coupling of McKean--Vlasov feedback with Fredkin--Johnsen self-reference results in a nontrivial equilibrium state, where agents may stabilize at distinct belief values depending on their topological position and stubbornness parameters. The network thus captures a departure from classical consensus dynamics, admitting stable opinion diversity, clustering, or metastable polarization, depending on the stochastic inputs and influence weights.

The right subfigure consists of four smaller network diagrams, arranged in a 2-by-2 panel format, exhibiting the behavior of ERGM structures under increasing vertex cardinality. The networks represent sequential graph realizations, beginning with minimal configurations and scaling to more complex formations \citep{pramanik2021scoring}. The top-left panel contains a low-order network with minimal connectivity, emphasizing sparse structure and limited paths for opinion diffusion \citep{pramanik2021optimala}. As vertex count increases across the subsequent panels, the networks exhibit growing average degree, tighter clustering, and enhanced path redundancy. The bottom-right network reaches a structural complexity reminiscent of the left subfigure, suggesting a continuum from micro-level interaction systems to large-scale social dynamics. This progression serves to highlight how topological features scale with system size in ERGM constructions and how such scaling interacts with the emergent properties of stochastic opinion processes \citep{pramanik2024parametric}.

Figure \ref{fig:fj_clustered} describes two large-scale network realizations, each comprising over one hundred vertices, constructed under a Friedkin–Johnsen-type opinion dynamics framework. The graphs exhibit a modular structure characterized by multiple distinguishable clusters, where intra-cluster edges are denser than inter-cluster links. Within each panel, nodes are visually differentiated by color: blue nodes represent adaptable agents whose opinions evolve through social influence, while orange nodes correspond to stubborn individuals whose opinions remain largely fixed \citep{pramanik2020optimization,pramanik2023semicooperation}. The structural design mirrors the heterogeneity of real-world social systems, in which both open-minded and resistant agents coexist, influencing the flow and stabilization of collective beliefs. Each cluster can be interpreted as a localized opinion community, shaped by internal reinforcement and partially isolated from external perspectives due to sparse cross-cluster connectivity \citep{pramanik2022lock}.
\begin{figure}[H]
	\centering
	\includegraphics[width=\textwidth]{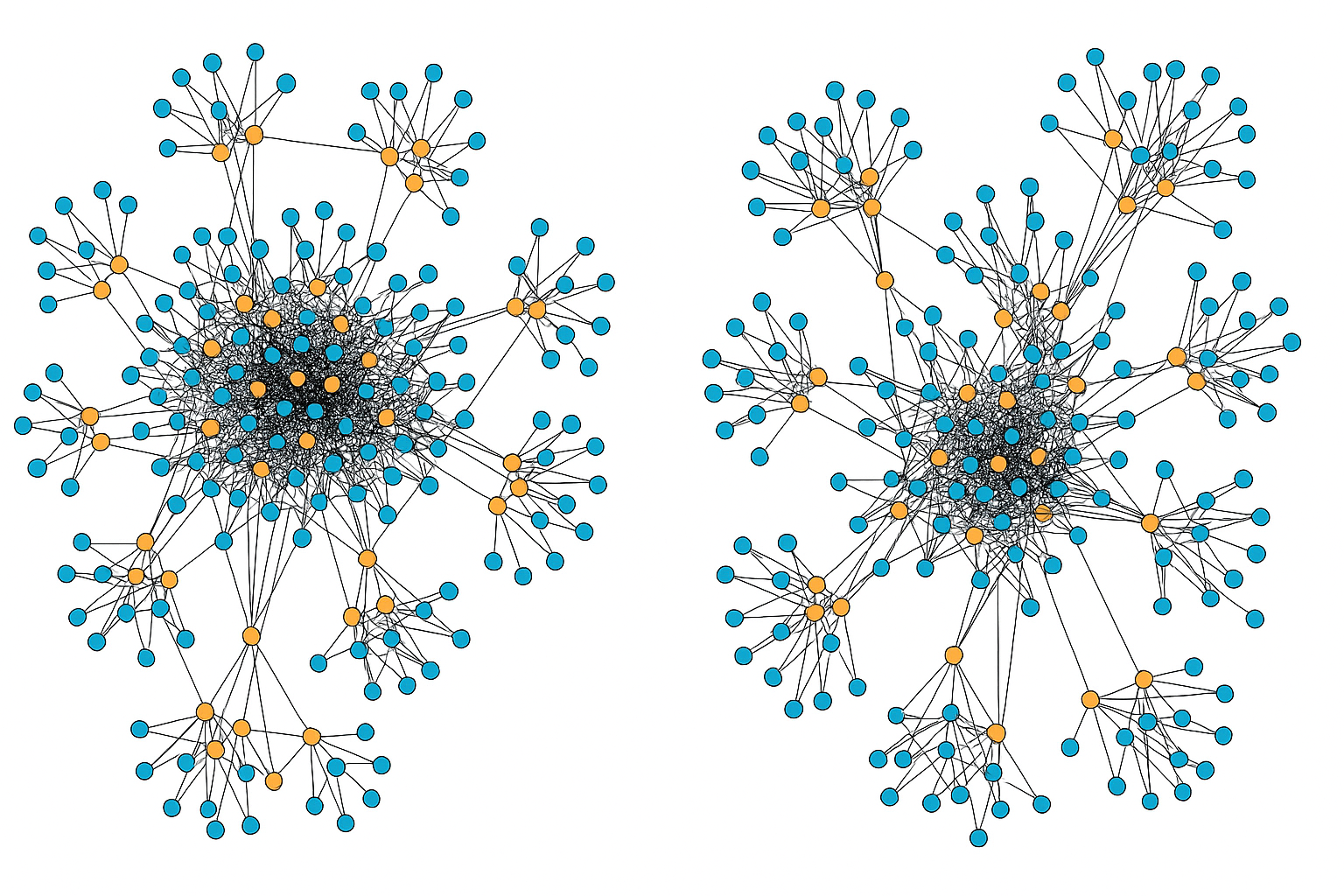}
	\caption{Clustered network topologies generated under the Friedkin-Johnsen opinion dynamics model with over 100 vertices per panel. Each node represents an agent, with orange nodes indicating stubborn agents (resistant to opinion change) and blue nodes representing adaptive agents. The two panels display distinct modular structures with varying inter- and intra-cluster connectivity, illustrating the emergence of opinion clusters due to local influence, stubbornness, and memory-driven dynamics. }
	\label{fig:fj_clustered}
\end{figure}

These network configurations illustrate how combining stochastic interaction models with structured topology can yield sustained opinion diversity and prevent convergence to a single consensus \citep{vikramdeo2023profiling,vikramdeo2024abstract}. The presence of multiple stubborn agents, embedded within various clusters, introduces localized anchoring effects that inhibit homogenization across the global network. This phenomenon aligns with theoretical predictions from bounded confidence and resistance-to-influence models, wherein subpopulations retain distinct viewpoints over time \citep{pramanik2020motivation}. From a statistical modeling perspective, such networks highlight the importance of incorporating both individual-level behavioral rules and macro-level structural features. The interplay between node-level properties and graph topology underscores the need for flexible generative models such as ERGMs conditioned on latent clustering to capture emergent properties like fragmentation, polarization, and partial consensus \citep{pramanik2024estimation,vikramdeo2024mitochondrial,pramanik2024motivation}.

This paper introduces a novel perspective on modeling opinion dynamics by embedding the process within the mathematical structure of McKean-Vlasov dynamics. We investigate a stochastic formulation where beliefs evolve continuously over time in a system comprising a homogeneous group of interacting agents. Foundational work in this domain was conducted by \citet{mckean1967propagation} and \citet{kac1956foundations}, who explored SDEs devoid of control mechanisms to establish results pertaining to propagation of chaos. Kac, in particular, analyzed mean-field SDEs influenced by classical Brownian motion, laying the groundwork for deeper interpretations of the Boltzmann and Vlasov kinetic frameworks. Substantial theoretical development was also contributed by \citet{sznitman1991topics}, whose insights further clarified the probabilistic underpinnings of such systems. Although early progress laid the foundations, the last decade has seen intensified exploration, largely driven by the emergence of mean-field game (MFG) theory independently formulated by \citet{lasry2007mean} and \citet{huang2003individual}. Within this theoretical construct, the McKean–Vlasov equation serves as a core tool to model equilibrium behavior among a continuum of indistinguishable agents whose strategic decisions depend on the aggregate distribution of their states. While MFG theory captures the Nash equilibrium arising from decentralized optimization \citep{pramanik2024bayes,bulls2025assessing}, scenarios involving collective welfare or social efficiency instead consider stochastic control of McKean-Vlasov SDEs, a distinction thoroughly examined by \citet{carmona2013control}.

As supported by prior research, the large-agent limit where the number of interacting participants approaches infinity implies that agents' belief trajectories become asymptotically independent \citep{pramanik2023cont,pramanik2024estimation}. In this regime, each agent's belief or state evolves according to a stochastic differential equation whose coefficients depend not on fixed parameters but on the empirical distribution of private states across the population. Consequently, solving the agent's optimization problem under such dynamics constitutes a stochastic control problem governed by McKean-Vlasov equations, a topic that remains only partially explored and analytically challenging. For an early treatment of this issue, see the contributions of \citet{andersson2011maximum}. An open question within this setting is whether the optimal feedback control derived from this mean-field-type problem yields an approximate equilibrium for the original finite-agent game \citep{pramanik2023path}. Addressing this requires a careful examination of how the optimal feedback strategies from the stochastic control framework correspond to, or diverge from, those derived in classical mean-field game theory \citep{pramanik2023cmbp,pramanik2023optimization001}. Our work contributes to this growing area by investigating whether such feedback strategies not only stabilize individual dynamics but also approximate equilibrium behavior in high-dimensional agent-based systems.

Figure \ref{fig:opinion_dynamics} presents a simulated visualization of opinion dynamics driven by McKean-Vlasov SDEs in a homogeneous agent population. The left panel displays the time evolution of individual opinions for 100 agents, each following a trajectory influenced by both mean-field interaction and stochastic perturbations \citep{kakkat2023cardiovascular,khan2023myb}. These trajectories illustrate how agents gradually adjust their beliefs over time, with most converging toward a shared consensus region while retaining some dispersion due to noise \citep{pramanik2022stochastic,pramanik2024stochastic}. The right panel shows kernel density estimates of the empirical distribution of opinions at several time snapshots, capturing the transition from a wide initial spread to a progressively concentrated distribution \citep{pramanik2021,pramanik2021consensus}. This reflects the gradual alignment of beliefs across the population, driven by the agents' attraction to the population mean and modulated by stochastic fluctuations. Together, both panels demonstrate how the McKean-Vlasov opinion dynamics the emergence of consensus and the role of randomness in shaping belief dynamics in large interacting systems.
\begin{figure}[H]
	\centering
	\includegraphics[width=\textwidth]{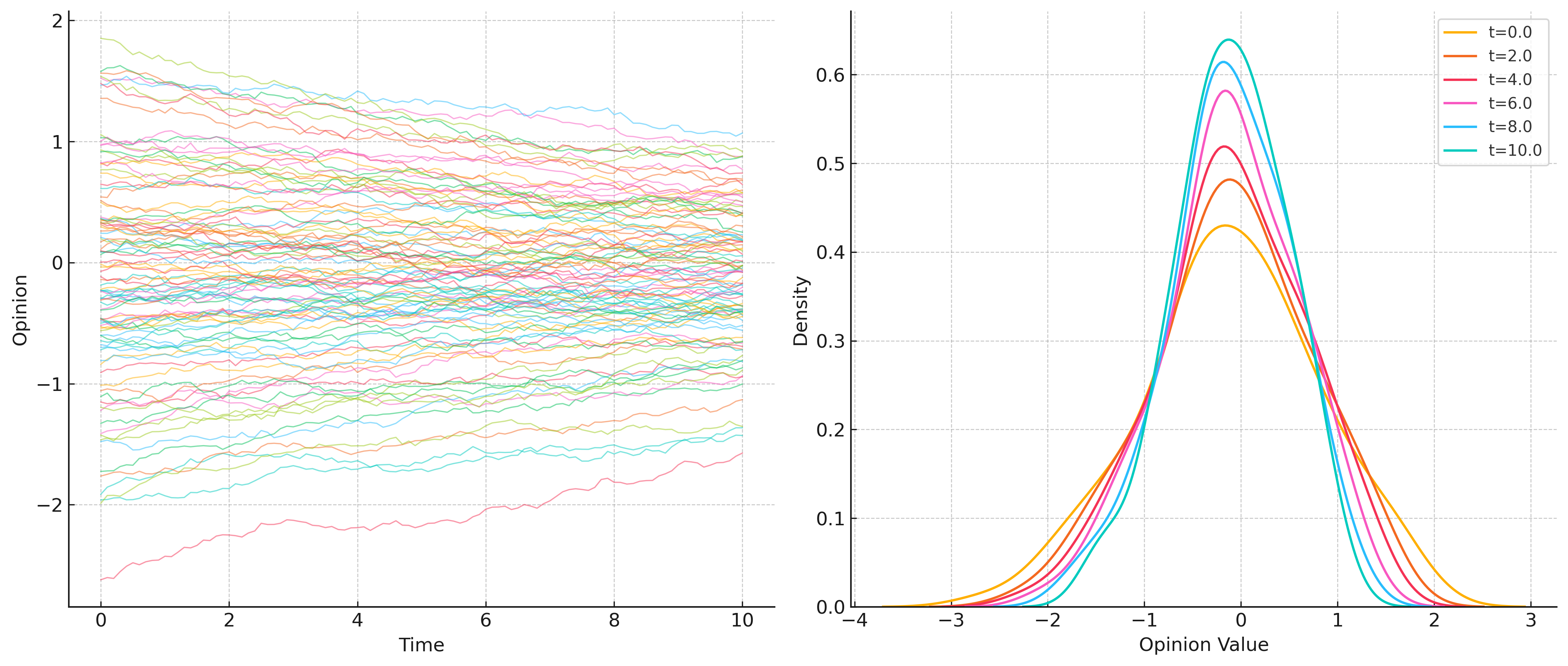}
	\caption{Simulated opinion evolution under McKean-Vlasov dynamics. Left: individual agent opinion trajectories over time. Right: density estimates of opinion distributions at selected time points.}
	\label{fig:opinion_dynamics}
\end{figure}

A particular class of nonlinear Hamilton-Jacobi-Bellman (HJB) equations can be reformulated into linear form by applying a logarithmic transformation an approach that originates from early developments in quantum mechanics, where Schr\"odinger first established a connection between the HJB approach and his eponymous equation \citep{dasgupta2023frequent,hertweck2023clinicopathological,khan2024mp60}. This linearization enables one to replace the backward time integration typically required for solving HJB equations with an alternative forward-time computation based on expected values derived from a stochastic process. These expectations correspond to averages over sample paths defined by a forward diffusion, which can be expressed through a path integral representation. In certain complex systems, such as those governed by the Merton-Garman-Hamiltonian, applying the Pontryagin Maximum Principle becomes analytically intractable, making the Feynman path integral method a practical alternative. This technique has been successfully applied in various domains. For instance, it has been utilized in motor control theory as demonstrated by \citet{kappen2005}, \citet{theodorou2010}, and \citet{theodorou2011}, and has also seen extensive use in quantitative finance, as detailed by \citet{belal2007}. Moreover, \citet{pramanik2020} proposed a Feynman-inspired path integral formulation to derive optimal feedback controls, and a broader generalization of Nash equilibrium concepts using tensor field structures was explored in \citep{pramanik2023semicooperation}.

The organization of the paper is as follows: Section 2 introduces the mathematical formulation of the opinion dynamics model along with the associated cost functional. Section 3 presents the key assumptions and fundamental characteristics of the stochastic McKean-Vlasov framework. In Section 4, we derive the system’s deterministic Hamiltonian structure and its corresponding stochastic Lagrangian representation. Section 5 details the central theoretical contributions involving Feynman-type path integral control, with a specific focus on its implementation within the Fredkin-Johnsen model context. Lastly, Section 6 summarizes the findings and outlines potential directions for future research.

\section{Opinion Dynamics as a Differential Game.}

Following \cite{niazi2016} consider a social network of $n$ agents by a weighted directed graph $G=(N,E,w_{ij})$, where $N=\{1,...,n\}$ is the set of all agents. Let, $E\subseteq N\times N$ be the set of all ordered pairs of all connected agents, and $w_{ij}$ be the influence of agent $j$ on agent $i$ for all $(i,j)\in E$. There are two types of connections, one-sided or two-sided. For the principle-agent problem, the connection is one-sided (i.e. Stackelberg model), and for the agent-agent problem, it is two-sided (i.e. Cournot model). Suppose $x^i(s)\in[0,1]$ be the opinion of agent $i^{th}$ at time $s\in[0,t]$ with their initial opinion $x^i(0)=x_0^i\in[0,1]$. Then $x^i(s)$ has been normalized into $[0,1]$ where $x^i(s)=0$ stands for a strong disagreement and $x^i(s)=1$ represents strong agreement and all other agreements stay in between. Let $\mathbf x(s)=\left[x^1(s),x^2(s),...,x^n(s)\right]^T\in[0,1]^n$ be the opinion profile vector of $n$ agents at time $s$ where T represents the transposition of a vector. Following \cite{niazi2016} define the cost function of agent $i$ as
\begin{align}\label{0}
L^i(s,\mathbf x,u^i)&:= \E\left\{\mbox{$\frac{1}{2}$}\int_0^t \bigg\{\sum_{j\in\eta_i}w_{ij}\left[x^i(s)-x^j(s)\right]^2+k_i\left[x^i(s)-x_0^i\right]^2+\left[u^i(s)\right]^2\bigg\}ds\right\},
\end{align}
where $w_{ij}\in[0,\infty)$ is a parameter which weighs the susceptibility of agent $j$ to influence agent $i$, $k_i\in[0,\infty)$ is agent $i$'s stubbornness, $u^i(s)\in\mathcal U([0,t])$ is an adaptive control process of agent $i$ taking values in a convex open set in $\mathbb R^n$, and set of all agents with whom $i$ interacts is $\eta_i$ and defined as $\eta_i:=\{j\in N:(i,j)\in E\}$. In this paper $u^i$ represents agent $i$'s control over their own opinion as well as influencing other agents' opinions. The cost function $L^i(s,\mathbf x,u^i)$ is twice differentiable with respect to time in order to satisfy Wick rotation, is continuously differentiable with respect to $i^{th}$ agent's control $u^i(s)$, non-decreasing in opinion $x^i(s)$, non-increasing in $u^i(s)$, and convex and continuous in all opinions and controls \citep{mas1995,pramanik2023path}. 

\begin{figure}[ht!]
	\centering
	\includegraphics[width=\textwidth]{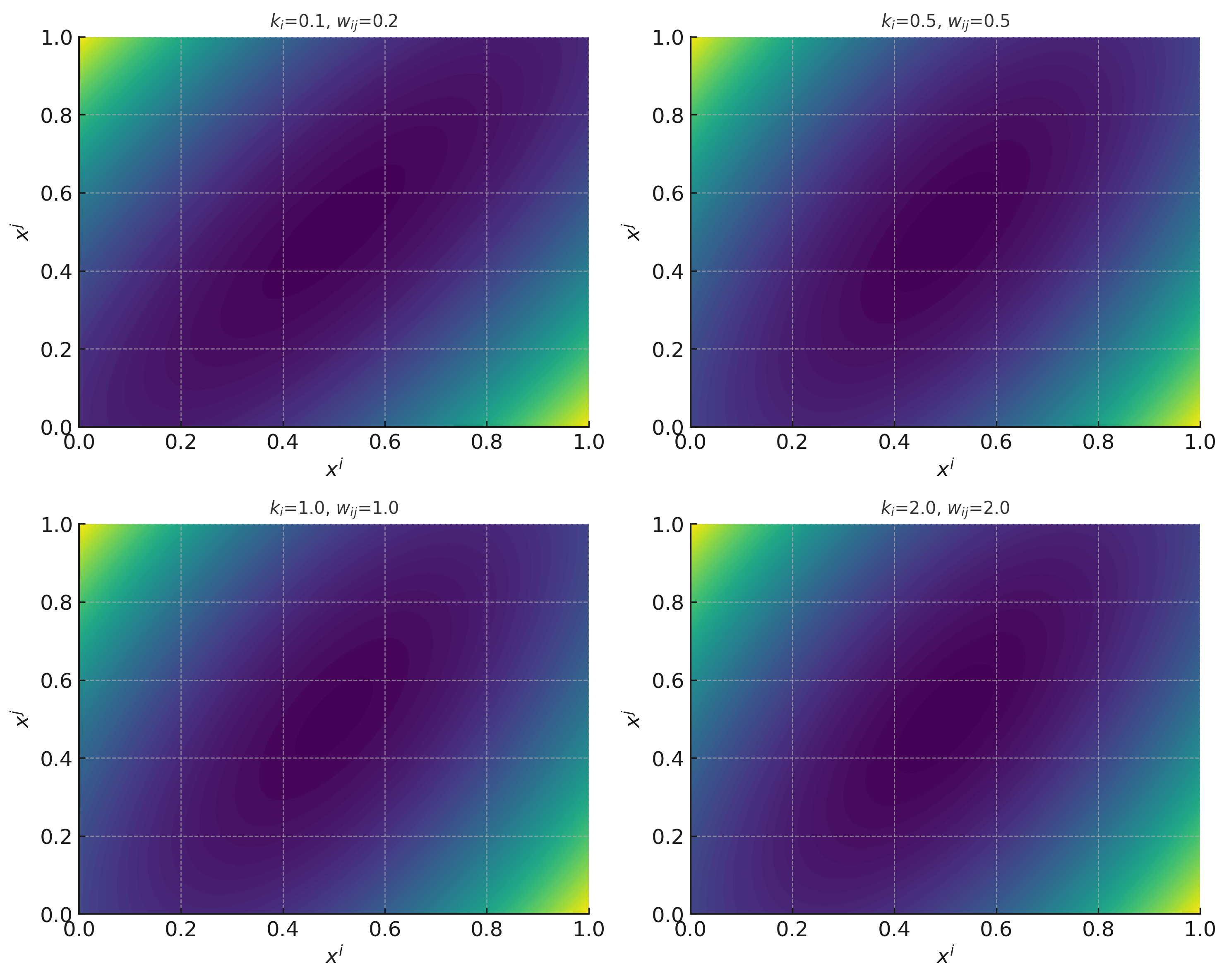}
	\caption{Contour plots of the agent’s cost function $L^i$ over the opinion space $(x^i, x^j)$ for varying combinations of stubbornness $k_i$ and influence weight $w_{ij}$. All plots assume a fixed control input $u^i = 0.5$. As $k_i$ increases, the cost surface becomes more sensitive to deviations from the agent's initial belief. Higher $w_{ij}$ values increase the penalty for disagreement between connected agents, steepening the cost landscape in the $x^j$ direction. These panels illustrate how opinion coupling and stubbornness shape the optimization landscape in decentralized decision-making models.}
	\label{fig:cost_contour_panels}
\end{figure}

Figure~\ref{fig:cost_contour_panels} presents a series of contour plots depicting the cost function $L^i$ (derived in Equation \eqref{0}) over the opinion space $(x^i, x^j)$ under varying values of the stubbornness parameter $k_i$ and influence weight $w_{ij}$ \citep{pramanik2025stubbornness,pramanik2025factors}. In each panel, the control input $u^i$ is fixed to a constant value, and the agent's cost is evaluated based on its deviation from both its own initial belief and the opinion of a connected agent \citep{pramanik2025optimal}. As $k_i$ increases, the plots reveal greater curvature along the $x^i$-axis, indicating that the agent increasingly penalizes deviations from its prior belief \citep{pramanik2016,pramanik2021thesis}. Similarly, larger values of $w_{ij}$ steepen the cost gradient along the $x^j$-axis, emphasizing the agent's sensitivity to discrepancies with influential neighbors \citep{pramanik2023cont,pramanik2024estimation}. These contour maps highlight how the interplay between intrinsic stubbornness and social influence defines the landscape over which agents seek to minimize their individual costs in a stochastic control setting \citep{maki2025new}.

\noindent The opinion dynamics of agent $i$ follow a McKean-Vlasov SDE
\begin{align}\label{1}
dx^i(s)&=\mu^i[s,x^i(s),\mathbb P_{(x^i)},u^i(s)]ds+\sigma^i[s,x^i(s),\mathbb P_{(x^i)},u^i(s)]dB^i(s),
\end{align}
with the initial condition $x_0^i$, where $\mu^i$ and $\sigma^i$ are the drift and diffusion functions, $\mathbb P_{(x^i)}$ is the probability law the opinion of agent $i$ with Brownian motion $B^i(s)=\left\{B^i(s),s\in[0,t]\right\}$ \citep{pramanik2024dependence}. The reason behind incorporating Brownian motion in agent $i$'s opinion dynamics is because of Hebbian Learning which states that \citep{pramanik2025stubbornness,pramanik2025factors}, neurons increase the synaptic connection strength between them when they are active together simultaneously, and this behavior is probabilistic in the sense that, resource availability from a particular place is random  \citep{hebb2005,kappen2007}. For example, for a given stubbornness, and influence from agent $j$, agent $i$'s opinion dynamics has some randomness in opinion. Suppose from other resources agent $i$ knows that the information provided by agent $j$'s influence is misleading \citep{pramanik2023optimization001,pramanik2025strategies}. Apart from that after considering humans as automatons, motor control and foraging for food become a big example of minimization of costs (or the expected return) \citep{kappen2007}. As control problems like motor controls are stochastic in nature because there is a noise in the relation between the muscle contraction and the actual displacement with joints with the change of the information environment over time, we consider the Feynman path integral approach to determine the stochastic control after assuming the opinion dynamics Equation (\ref{1}) \citep{feynman1949,fujiwara2017}. The coefficient of the control term in Equation (\ref{0}) is normalized to $1$, without loss of generality. The cost functional represented in the Equation (\ref{0}) is viewed as a model of the motive of agent $i$ towards a prevailing social issue \cite{niazi2016}. The aim of this paper is to characterize a feedback Nash equilibrium $u^{i*}\in\mathcal U([0,t])$ such that
\[
L^i(u^{i*})=\underset{u^i\in\mathcal U([0,t])}{\arg\min}\E_s\left\{L^i(s,\mathbf x,u^i)\big|\mathcal F_0^x\right\},
\]
subject to the Equation (\ref{1}), where $\E_0(L^i|\mathcal F_0^x)$ represents the expectation on $L^i$ at time $0$ subject to agent $i$'s opinion filtration $\mathcal F_0^x$ generated by the Brownian motion $B^i$ starting at the initial time $0$ for a complete probability space $(\Omega,\mathcal F,\mathcal F_0^x,\mathbb P)$. A solution to this problem is a feedback Nash equilibrium as the control of agent $i$ is updated based on the opinion at the same time $s$ \citep{pramanik2025impact,pramanik2025strategic}.

\section{Background.}
Let $t>0$ be a fixed finite horizon. Assume  $ B^i(s)=\{ B^i(s)\}_{s=0}^t$ is a 1-dimensional Brownian motion defined on a probability space $(\Omega, \mathcal F,\mathbb P)$, and $\mathcal F_s=\{\mathcal F_s^x\}_{s=0}^t$ be its natural filtration augmented with an independent $\sigma$-algebra $\mathcal F_0^x$, where $\mathbb P$ be the probability law defined above \citep{pramanik2024dependence,pramanik2024measuring}. The McKean-Vlasov stochastic opinion dynamic of agent $i$ is represented in Equation(\ref{1}), where the drift and diffusion coefficients of opinion $x^i(s)$ are given by a pair of deterministic functions $(\mu^i,\sigma^i):[0,t]\times\mathbb R\times\mathcal P_2\left(\mathbb R\right)\times\mathcal U\ra \mathbb R\times\mathbb R$, and $u^i=\{u^i(s)\}_{s=0}^t$ is the \emph{admissible control} of agent $i$ assumed to be a progressively measurable process with values in a measurable space $(\mathcal U, \mathcal U^*)$ \citep{pramanik2024parametric}. In $(\mathcal U, \mathcal U^*)$, $\mathcal U$ is an open subset of an Euclidean space $\mathbb R$, and $\mathcal U^*$ is a $\sigma$-field induced by a Borel $\sigma$-field in the same Euclidean space \citep{carmona2015}. For a metric space $E$, if $\mathcal E$ is its Borel $\sigma$-field, we use $\mathcal P(E)$ as the notation for the set of all probability measures on $(E,\mathcal E)$. We further assume $\mathcal P(E)$ is endowed with the topology of weak convergence \citep{cosso2023}. If E is a Polish space $G$, then for all $r\geq 1$ with metric $d_G$ define
\[
\mathcal P_r(G):=\left\{\gamma\in\mathcal P(G):\int_G d_G(x_0^i,x^i)^r\gamma(d x^i)<\infty\right\},
\]
where $x_0^i\in G$ is arbitrary. For $r\geq 1$ Wasserstein distance $W_r(\gamma,\gamma')$ on $\mathcal P(E)$ define
\begin{multline*}
W_r(\gamma,\gamma'):=\inf\biggr\{\int_{G\times G}d_G(x^i,y^i)^r\pi(d x^i,d y^i):\pi\in\mathcal P(G\times G)\\
 \text{so that}\ \pi(.\times G)=\gamma, \text{and}\ \pi(G\times .)=\gamma'\biggr\}^{\frac{1}{r}},
\end{multline*}
for all $\gamma,\gamma'\in\mathcal P_r(G)$. The space $\left(\mathcal P_r(G),W_r\right)$ is indeed a Polish space \citep{cosso2023}. The term \textit{non-linear}, used to describe the Equation (\ref{1}), does not mean the drift ($\mu^i$) and diffusion  ($\sigma^i$) coefficients are non-linear functions of $X$, but instead, they not only depend on the value of the unknown process $x^i(s)$ but also on its distributions $\mathbb P_{(x^i)}$ \citep{carmona2015}. The set $\mathcal U$ of \emph{admissible controls} $u^i$ as the set of $ \mathcal U$-valued progressively measurable processes $u^i\in\mathcal H^{2,m}$, where $\mathcal H^{2,\tilde m}$ is a Hilbert space
\[
\mathcal H^{2,\tilde m}:=\left\{y^i\in\mathcal H^{0,\tilde m};\ \E\int_0^t|y^i(s)|^2ds<\infty\right\}
\]
with $\mathcal H^{0,\tilde m}$ being the collection of all $\mathbb R^{\tilde m}$-valued progressive measurable processes on $[0,t]$. Let $\mathcal V$ be a sub-$\sigma$-algebra of $\mathcal F$ so that the following assumption holds \citep{hua2019,pramanik2024motivation}.
\begin{as}\label{as0}
	(i). $\mathcal V$ and the filtration $\mathcal F_\infty$ generated by the  Brownian motion of $i^{th}$ agent $ B^i(s)$ are independent.\\
	(ii). $\mathcal V$ is ``rich enough" by means of the following condition:
	\begin{multline*}
	\mathcal P_2(C([0,t],\mathcal H^{2,\tilde m}))=\bigg\{\mathbb P_{(Y)}\ \text{with}\ Y:[0,t]\times\Omega\ra\mathcal H^{2,\tilde m}\ \text{ continuous and}\\
	\mathcal B([0,t])\otimes\mathcal V-\text{measurable process satisfying}\ \E\int_0^t|y^i(s)|^2ds<\infty\bigg\}.
	\end{multline*}
	In other words, for all $\gamma\in\mathcal P_2(C([0,t],\mathcal H^{2,\tilde m}))$ there exists a continuous and Borel $\mathcal B([0,t])\otimes\mathcal V$-process $y^i:[0,t]\times\Omega\ra\mathcal H^{2,\tilde m}$, such that $\E\int_0^t|y^i(s)|^2ds^<\infty$, and Y has the law (distribution) equal to $\gamma$.
\end{as}

\begin{lem}\citep{cosso2023}.\label{lem0}
	Let $\tilde{\mathcal V}$ be another sub-$\sigma$-algebra of $\mathcal F$ on the probability space $(\Omega,\mathcal F,\mathbb P)$. If Assumption \ref{as0} holds then the following statements are equivalent.\\
	(i). There exists a $\tilde{\mathcal V}$-measurable random variable $z^i:\Omega\ra\mathbb R$ with the Uniform distribution $[0,1]$.\\
	(ii). $\tilde{\mathcal V}$ is ``rich enough" by means of the following condition:
	\begin{multline*}
	\mathcal P_2(C([0,t],\mathcal H^{2,\tilde m}))=\bigg\{\mathbb P_{(y^i)}\ \text{with}\ y^i:[0,t]\times\Omega\ra\mathcal H^{2,\tilde m}\ \text{continuous and}\\
	\mathcal B([0,t])\otimes\tilde{\mathcal V}-\text{measurable process satisfying}\ \E\int_0^t|y^i(s)|^2ds<\infty\bigg\}.
	\end{multline*}
\end{lem}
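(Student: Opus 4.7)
The lemma is an instance of the classical fact that "a sub-$\sigma$-algebra supports a Uniform$[0,1]$ random variable" and "a sub-$\sigma$-algebra is rich enough to realize any Borel probability measure on a Polish space" are equivalent characterizations of atomlessness. The plan is to establish the two implications separately, using only the intrinsic Polish structure of $C([0,t],\mathcal H^{2,\tilde m})$ and standard representation-by-uniform arguments; Assumption \ref{as0} enters only through the ambient setting.

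For the direction (ii)$\Rightarrow$(i), I would pick any fixed nonzero element $h\in\mathcal H^{2,\tilde m}$ and define the law $\gamma_U$ on $C([0,t],\mathcal H^{2,\tilde m})$ as the pushforward of the Uniform$[0,1]$ distribution under the map $u\mapsto (s\mapsto u\,h)$, so that $\gamma_U$ is supported on constant paths. A direct check gives $\int\int_0^t\|\eta(s)\|^2\,ds\,\gamma_U(d\eta)=t\,\|h\|^2\,\E[U^2]<\infty$, so $\gamma_U\in\mathcal P_2(C([0,t],\mathcal H^{2,\tilde m}))$. By (ii) there exists a continuous, $\mathcal B([0,t])\otimes\tilde{\mathcal V}$-measurable process $y^i$ with $\mathbb P_{(y^i)}=\gamma_U$. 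Because $\gamma_U$ is concentrated on constants, $y^i(s)\equiv y^i(0)$ almost surely, and the slice $y^i(0)$ is $\tilde{\mathcal V}$-measurable with distribution $U\cdot h$. Setting $z^i:=\langle y^i(0),h\rangle_{\mathcal H^{2,\tilde m}}/\|h\|^2$ then gives the desired $\tilde{\mathcal V}$-measurable Uniform$[0,1]$ random variable.

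For (i)$\Rightarrow$(ii), I would use that $C([0,t],\mathcal H^{2,\tilde m})$ is a separable Polish space, hence Borel-isomorphic to a Borel subset of $[0,1]$. Given any target $\gamma\in\mathcal P_2(C([0,t],\mathcal H^{2,\tilde m}))$, standard quantile / Borel-isomorphism arguments yield a Borel map $\phi:[0,1]\to C([0,t],\mathcal H^{2,\tilde m})$ with $\phi_{\#}(\mathrm{Leb}_{[0,1]})=\gamma$. Defining $y^i(s,\omega):=\phi(z^i(\omega))(s)$ produces the required process: continuity in $s$ is inherited from $\phi(u)$ already being a continuous path; the joint $\mathcal B([0,t])\otimes\tilde{\mathcal V}$-measurability follows from the Borel measurability of the evaluation map $(u,s)\mapsto\phi(u)(s)$ on $[0,1]\times[0,t]$ combined with the $\tilde{\mathcal V}$-measurability of $z^i$; and the second-moment bound $\E\int_0^t|y^i(s)|^2\,ds<\infty$ is exactly the statement $\gamma\in\mathcal P_2$.

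The main obstacle is ensuring that the Borel parametrization $\phi$ in (i)$\Rightarrow$(ii) gives an \emph{evaluation map} that is jointly measurable in $(u,s)$, so that the constructed $y^i$ actually carries the product $\sigma$-field demanded by "rich enough." This is not automatic from a bare Borel isomorphism between $C([0,t],\mathcal H^{2,\tilde m})$ and a subset of $[0,1]$; the cleanest route is to exploit that the evaluation functional $\mathrm{ev}_s:\eta\mapsto\eta(s)$ is continuous on $C([0,t],\mathcal H^{2,\tilde m})$, uniformly over $s\in[0,t]$, so that $(u,s)\mapsto(\mathrm{ev}_s\circ\phi)(u)$ is Borel in $u$ and continuous in $s$, hence jointly Borel by a standard Carathéodory argument. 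Once this joint measurability is secured, the remaining verifications---$L^2$ integrability and equality in law---reduce to unwinding definitions, and the equivalence closes.
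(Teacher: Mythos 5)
Your argument is essentially correct, but note that the paper itself supplies no proof of this lemma: it is stated with a citation to \citet{cosso2023} and is not among the results proved in the Appendix, so there is nothing internal to compare against. Judged on its own, your proof follows the standard route from that literature and both directions are sound. For (ii)$\Rightarrow$(i), realizing a law concentrated on constant paths and reading off the time-zero section is exactly the right trick; the section $\omega\mapsto y^i(0,\omega)$ is $\tilde{\mathcal V}$-measurable because sections of a $\mathcal B([0,t])\otimes\tilde{\mathcal V}$-measurable map at a fixed time are, and the linear functional $\langle\cdot,h\rangle/\|h\|^2$ recovers the uniform variable. For (i)$\Rightarrow$(ii), the existence of a Borel map $\phi:[0,1]\to C([0,t],\mathcal H^{2,\tilde m})$ pushing Lebesgue measure to a prescribed $\gamma$ is the classical representation theorem on Polish spaces, and you correctly identify the only delicate point, namely that joint $\mathcal B([0,t])\otimes\tilde{\mathcal V}$-measurability of $(s,\omega)\mapsto\phi(z^i(\omega))(s)$ must be argued via the Carath\'eodory property (Borel in $u$, continuous in $s$) rather than taken for granted from the Borel isomorphism alone. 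Two small remarks. First, the condition as transcribed in the paper asserts an \emph{equality} of sets; the reverse inclusion (that every such process has path-law in $\mathcal P_2$ for the sup metric) does not follow from $\E\int_0^t|y^i(s)|^2\,ds<\infty$ alone, but this is an artifact of how the paper restates the condition from \citet{cosso2023}, and the substantive realizability inclusion is the one you prove. Second, your observation that Assumption \ref{as0} plays no role beyond fixing the ambient setting is accurate; the equivalence is intrinsic to $\tilde{\mathcal V}$ and the Polish structure of the path space.
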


\begin{remark}
	Consider two sets of sub-$\sigma$-algebras $F_1\in\tilde{\mathcal V}$ and $F_2\in\tilde{\mathcal V}$ in the probability space $(\Omega,\mathcal F,\mathbb P)$ so that $\mathbb P(F_1)>0$, and $F_1\subset F_2$ with $0<\mathbb P(F_1)<\mathbb P(F_2)$ (atomless space). Then statements (i) and (ii) in Lemma \ref{lem0} are equivalent.
\end{remark}

\begin{as}\label{as1}
	(i) There exists a linear, unbounded operator $O:\mathcal D(O)\subset\mathcal H^{2,\tilde m}\ra \mathcal H^{2,\tilde m}$  which facilitates a $C_0$-semigroup of pseudo-contractions $\{\exp(sO);s\geq 0$ in $\mathcal H^{2,\tilde m}$.\\
	(ii). The drift $\mu^i$ and the diffusion coefficients $\sigma^i$ are measurable.\\
	(iii). There exists a constant $C^*$ such that
	\begin{align*}
	|\mu^i(s,\gamma,x^i,u^i)-\mu^i(s,\gamma',x^{i'},u^{i'})|&\leq C^*(W_2(\gamma,\gamma')+|x^{i}(s)-x^{i'}(s)|+|u^i(s)-u^{i'}(s)|),\\
	& \text{for all}\ (\gamma,x^i,u^i),(\gamma',x^{i'},u^{i'})\in\mathcal P_2(\mathbb R\times \mathcal U)\times\mathbb R\times \mathcal U,\\
	|\sigma^i(s,\gamma,x^i,u^i)-\sigma(s,\gamma',x^{i'},u^{i'})|&\leq C^*(W_2(\gamma,\gamma')+|x^i(s)-x^{i'}(s)|+|u^i(s)-u^{i'}(s)|),\\ 
	&\text{for all}\ \gamma,\gamma'\in\mathcal P_2(\mathbb R),\\
	|L^i(s,\mathbf x,u^i)-L^i(s,\mathbf x',u^{i'})|&\leq C^*(W_2(\gamma,\gamma')+|x^i(s)-x^{i'}(s)|+|u^i(s)-u^{i'}(s)|),\\
	& \text{for all}\ (\gamma,x^i,u^i),(\gamma',x^{i'},u^{i'})\in\mathcal P_2(\mathbb R\times \mathcal U)\times \mathbb R\times \mathcal U.
	\end{align*}
	(iv). $\sigma^i$ is differentiable in $(\gamma,x^i,u^i)\in\mathcal P_2(\mathbb R\times\mathcal U)\times\mathbb R\times\mathcal U$, and the derivative $\partial_\gamma\sigma^i:\mathcal P_2(\mathbb R\times\mathcal U)\times\mathbb R\times\mathcal U\ra\mathbb R\times\mathcal U$ is bounded Lipschitz continuous. Hence, there exists a positive constant $C^*$ such that
	\begin{align*}
	|\partial_\gamma\sigma^i(s,\gamma,x^i,u^i)|&\leq C^*,\ \text{for all}\ (\gamma,x^i,u^i)\in\mathcal P_2(\mathbb R\times\mathcal U)\times \mathbb R\times\mathcal U,\\
	|\partial_\gamma\sigma^i(s,\gamma,x^i,u^i)-\partial_\gamma\sigma(s,\gamma,x^{i'},u^{i'})|&\leq C^*(W_2(\gamma,\gamma')+|x^i(s)-x^{i'}(s)|+|u^i(s)-u^{i'}(s)|),\\
	&\text{for all}\ (\gamma, x^i,u^i),(\gamma',x^{i'},u^{i'})\in\mathcal P_2(\mathbb R\times\mathcal U)\times\mathbb R\times\mathcal U.  
	\end{align*}
	(v). Consider $\omega^i=\mu^i$ and $L^i$. Therefore, $\omega^i$ is differentiable in $(\gamma,x^i,u^i)\in\mathcal P_2(\mathbb R\times\mathcal U)\times\mathbb R\times\mathcal U$ and the derivatives $\partial_\gamma\omega^i:\mathcal P_2(\mathbb R\times\mathcal U)\times\mathbb R\times\mathcal U\times(\mathbb R\times\mathcal U)\ra \mathbb R\times\mathcal U$,  $\partial_x\omega^i:\mathcal P_2(\mathbb R\times\mathcal U)\times \mathbb R\times\mathcal U\ra\mathbb R$, and $\partial_u\omega^i:\mathcal P_2(\mathbb R\times\mathcal U)\times\mathbb R\times\mathcal U\ra\mathcal U$ are bounded and Lipschitz continuous. Hence, for a given $s\in[0,t]$, there exists a constant $C^*>0$ so that
	\begin{align*}
	|\partial_\gamma\omega^i(\gamma,x^i,u^i)|+|\partial_x\omega^i(\gamma,x^i,u^i)|&+|\partial_u\omega^i(\gamma,x^i,u^i)|\leq C^*, \\
	& \text{for all}\ (\gamma,x^i,u^i)\in\mathcal P_2(\mathbb R\times\mathcal U)\times \mathbb R\times\mathcal U\times(\mathbb R\times\mathcal U),\\
	|\partial_\gamma\omega^i(\gamma,x^i,u^i)-\partial_\gamma\omega^i(\gamma',x^{i'},u^{i'})|&\leq C^*(W_2(\gamma,\gamma')+|x^i-x^{i'}|+|u^i-u^{i'}|),\\
	&\text{for all}\ (\gamma,x^i,u^i),(\gamma',x^{i'},u^{i'})\in\mathcal P_2(\mathbb R \times\mathcal U)\times \mathbb R\times\mathcal U\times(\mathbb R\times\mathcal U),\\
	|\partial_x\omega^i(\gamma,x^i,u^i)-\partial_x\omega^i(\gamma',x^{i'},u^{i'})|&\leq C^*(W_2(\gamma,\gamma')+|x^i-x^{i'}|+|u^i-u^{i'}|),\\
	&\text{for all}\ (\gamma,x^i,u^i),(\gamma',x^{i'},u^{i'})\in\mathcal P_2(\mathbb R\times\mathcal U)\times \mathbb R\times\mathcal U,\\
	|\partial_u\omega^i(\gamma,x^i,u^i)-\partial_u\omega^i(\gamma',x^{i'},u^{i'})|&\leq C^*(W_2(\gamma,\gamma')+|x^i-x^{i'}|+|u^i-u^{i'}|),\\
	&\text{for all}\ (\gamma,x^i,u^i),(\gamma',x^{i'},u^{i'})\in\mathcal P_2(\mathbb R\times\mathcal U)\times \mathbb R\times\mathcal U.
	\end{align*}
\end{as}

\begin{as}\label{as2}
	Under a feedback control structure of a society there exists a measurable function $h^i$ such that $h^i:[0,t]\times C([0,t]):\mathbb{R}\times\mathcal U\ra \mathcal U$ for which $u^i(s)=h^i[x^i(s,u^i)]$ such that Equation (\ref{1}) has a solution.
\end{as}

\begin{as}\label{as3}
	(i). Let $\mathcal Z$ be the set of total knowledge of the entire society. Agent $i$'s knowledge set is $\mathcal Z_i\subset \mathcal Z$ based on their limitations of acquiring new information to enhance knowledge from their society at a given time. This immediately implies set $\mathcal Z_i$ is different for different agents. An agent with younger age has less limitation to acquire new information to make new opinions.\\
	(ii). The initial cost functional of the society is $L_0:[0,t]\times\mathbb R\times\mathcal U\ra\mathbb R$ such that for agent $i$ satisfies $L_0^i\subset L_0$ in Polish space and both of them are concave which is equivalent to Slater condition \citep{marcet2019}.\\
	(iii). For all $u^i(s)\in\mathcal U([0,t])$ there exists $\epsilon>0$ small enough, so that 
	\[
	\E_0\left\{\int_{0}^t\mbox{$\frac{1}{2}$} \bigg\{\sum_{j\in\eta_i}w_{ij}\left[x^i(s)-x^j(s)\right]^2+k_i\left[x^i(s)-x_0^i\right]^2+\left[u^i(s)\right]^2\bigg\}ds\right\}\geq\epsilon; \ \ i\neq j;\ i,j=1,2,...n,
	\]
	where $\E_0\{.\}=\E\{.|x^i_0\}$.
\end{as}

\begin{remark}
	Assumption \ref{as2} guarantees the possibility of at least one fixed point in the knowledge space. It is important to  note that the agent makes decisions based on all available information. Then following Lemma \ref{lem1} shows that the fixed point indeed unique. Assumption \ref{as3} implies that each agent has some initial cost functional $L^i_0$ at the beginning of $[0,t]$, and conditional expected cost functional $\E_0\{L^i\}$ is positive throughout this time interval \citep{pramanik2024measuring}.
\end{remark}	
\begin{lem}\label{lem1}
	Suppose $i^{th}$ agent's initial opinion $x_0^i\in G$ is independent of $B^i(s)$, and $\mu^i$ and $\sigma^i$ satisfy Assumptions \ref{as0} and \ref{as1}. Then there exists a unique solution to opinion dynamics represented by the Equation (\ref{1}) in $\mathcal H^{2,\tilde m}$. Moreover, there exists some positive constant $\hat c$ on time t, Lipschitz constants $\mu^i$ and $\sigma^i$, the unique solution satisfies 
	\[
	\E\left\{\sup_{s\in[0,t]}|x^i(s)|^2\right\}\leq\hat c(1+\E|x_0^i|^2)\exp(\hat c t),
	\]
	for all $i=1,2,...,n$.
\end{lem}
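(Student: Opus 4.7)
The plan is to combine a Picard fixed-point argument carried out on the space of laws with a standard Itô-BDG-Gronwall moment estimate. Both steps are by-now classical in the McKean-Vlasov literature; the role of Assumptions \ref{as0}-\ref{as2} is to supply the structural ingredients (richness of the probability space, Lipschitz regularity of the coefficients, and well-defined feedback form of the control) that make the argument go through in the present setting.

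First, I would recast Equation (\ref{1}) as a fixed-point problem by freezing the marginal law. For any candidate $\gamma\in\mathcal P_2(C([0,t],\mathcal H^{2,\tilde m}))$, Assumption \ref{as0}(ii) together with Lemma \ref{lem0} provides a $\mathcal B([0,t])\otimes\mathcal V$-measurable representative with law $\gamma$; substituting this frozen law for $\mathbb P_{(x^i)}$ in $\mu^i$ and $\sigma^i$, and using the feedback form $u^i(s)=h^i[x^i(s,u^i)]$ from Assumption \ref{as2}, reduces (\ref{1}) to a classical closed-loop SDE whose coefficients are Lipschitz and of linear growth in $x^i$ by Assumption \ref{as1}(iii). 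Classical strong-solution theory then produces a unique $X^\gamma\in\mathcal H^{2,\tilde m}$, and we set $\Phi(\gamma):=\mathbb P_{X^\gamma}$. A fixed point of $\Phi$ is precisely a solution of (\ref{1}).

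Next, I would show $\Phi$ is a $W_2$-contraction on a sufficiently short sub-interval $[0,\delta]$. Given $\gamma,\gamma'\in\mathcal P_2(C([0,\delta],\mathcal H^{2,\tilde m}))$, apply Itô's formula to $|X^\gamma(s)-X^{\gamma'}(s)|^2$, invoke the Lipschitz bounds of Assumption \ref{as1}(iii) in both the spatial and measure slots (the latter producing a $W_2(\gamma,\gamma')$ term), control the martingale part via BDG, and conclude with Gronwall to obtain
\begin{equation*}
\E\sup_{s\in[0,\delta]}|X^\gamma(s)-X^{\gamma'}(s)|^2\;\leq\;C\,\delta\,W_2(\gamma,\gamma')^2.
\end{equation*}
For $\delta$ small enough this makes $\Phi$ a strict contraction; Banach's theorem then delivers uniqueness and existence on $[0,\delta]$, after which a standard restart at $\delta$ (using the already-constructed solution as a new initial condition, which lies in $L^2$) extends the solution uniquely to $[0,t]$. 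The a priori moment bound is then obtained by applying Itô to $|x^i(s)|^2$, using the linear-growth consequence of Assumption \ref{as1}(iii) evaluated at a reference point (together with $W_2^2(\mathbb P_{(x^i(s))},\delta_0)\leq \E|x^i(s)|^2$), BDG on the stochastic integral, and Gronwall's lemma, to reach
\begin{equation*}
\E\sup_{r\in[0,s]}|x^i(r)|^2\;\leq\;\hat c\bigl(1+\E|x_0^i|^2\bigr)+\hat c\int_0^s\E\sup_{r\in[0,u]}|x^i(r)|^2\,du,
\end{equation*}
which gives the claimed estimate with the exponential factor $\exp(\hat c t)$.

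The main obstacle I foresee is the interaction between the frozen-law step and the feedback structure: one must verify that the closed-loop coefficients $(s,x^i)\mapsto\mu^i(s,\gamma,x^i,h^i(x^i))$ and $(s,x^i)\mapsto\sigma^i(s,\gamma,x^i,h^i(x^i))$ remain Lipschitz in $x^i$ uniformly in $\gamma$, and that the resulting law $\mathbb P_{X^\gamma}$ actually lies in $\mathcal P_2(C([0,t],\mathcal H^{2,\tilde m}))$ so that $\Phi$ maps this space to itself. Although Assumption \ref{as1}(iii) gives joint Lipschitz continuity in $(x^i,u^i,\gamma)$, Lipschitz regularity of the feedback $h^i$ in Assumption \ref{as2} is not stated explicitly and must be either read out of the smoothness provided by Assumption \ref{as1}(v) or imposed implicitly; making this composition step precise, and checking that the BDG constants arising from $\sigma^i$ do not spoil the $\delta$-contraction, is the technical core of the proof.
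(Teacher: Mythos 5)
Your proposal is correct and follows the same overall strategy as the paper's appendix proof: freeze the marginal law, solve the resulting classical SDE with random coefficients, and seek a fixed point of the map $\gamma\mapsto\mathbb{P}_{X^{\gamma}}$ in the Wasserstein space, with the moment bound coming from a Gronwall estimate. The one place you genuinely diverge is in how the contraction is closed. You shrink the horizon to $[0,\delta]$ so that $\Phi$ itself is a strict contraction and then restart the solution at $\delta$; the paper instead keeps the full horizon, iterates its estimate $W_2(s_2)\left(\Xi(\pi),\Xi(\pi')\right)^2\leq \hat c t\exp(\hat c t)\int_0^{s_2}W_2(s_1)(\pi,\pi')\,ds_1$ a total of $\rho$ times to produce the factor $(\hat c t)^{\rho}/\rho!$, and concludes that some iterate $\Xi^{\rho}$ is a strict contraction, which still forces a unique fixed point of $\Xi$. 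The iterated-map device buys a single global argument with no gluing step; your short-time version is more elementary but needs the square-integrability of the restarted initial condition, which your a priori bound supplies. You are also more explicit than the paper about the terminal moment estimate, which the lemma asserts but the appendix proof does not actually derive. Finally, the obstacle you flag --- that the closed-loop coefficients $\mu^i(s,\gamma,x,h^i(x))$ and $\sigma^i(s,\gamma,x,h^i(x))$ must stay Lipschitz in $x$ for Gronwall to absorb the control increment --- is a real issue in the paper's own argument: its displayed bound carries the extra term $\int_0^{s_2}\mathbb{E}\left\{\sup_{\nu\in[0,s_1]}|u^i_{\pi}(\nu)-u^i_{\pi'}(\nu)|^2\right\}ds_1$, which is silently dropped before Gronwall is applied, a step that is only legitimate under precisely the Lipschitz-feedback hypothesis you identify. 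Your proposal is, if anything, the more careful of the two on that point.
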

\begin{proof}
	See the Appendix.
\end{proof}	

\begin{remark}
	Lemma \ref{lem1} guarantees that the stochastic opinion dynamics shown in Equation (\ref{1}) exhibits a unique fixed point and the expectation is bounded in the polish space G.
\end{remark}

Assume the set of admissible strategies $\mathcal U([0,t])$ is convex and $u^i\in\mathcal U([0,t])$. Define $x^{i*}(s):=x^{i*}(s,u^{i*})$ as the optimal opinion which is the solution of the Equation (\ref{1}) with the initial opinion $x_0^i$. First objective is to determine the G\^ateaux derivative of the cost functional $L^i(s,\bm x,u^i)$ at $u^i$ in all directions \citep{valdez2025association,valdez2025exploring}. Consider another strategy $v^i$ such that $v^i(s)=u^{i'}(s)-u^i(s)$ for another admissible strategy $u^{i'}\in\mathcal U([0,t])$. Hence, $v^i\in \mathcal U([0,t])$. $v^i$ can be considered as the direction of the G\^ateaux derivative of $L^i(s,\bm x,u^i)$ \citep{carmona2016}. For every $\epsilon>0$ small enough, define a strategy $u^{i\epsilon}(s)=u^i(s)+\epsilon v^i(s)$, and the corresponding controlled opinion vector $\bm x^{\epsilon}:=\bm x^{\epsilon}(s,u^i)$. Furthermore, define the variational process ${\mathfrak V}=\{\mathcal V^i(s)\}_{s=0}^t$ as the solution of the equation
\begin{multline}
d\mathcal V^i(s)=\left[\frac{\partial}{\partial x^i}\mu^i(s,x^i(s),\mathbb P_{(x^i)},u^i(s))\mathcal V^i(s)+\zeta\left(s,\mathbb P_{(x^i,\mathcal V^i)}\right)+\frac{\partial}{\partial u^i}\mu^i(s,x^i(s),\mathbb P_{(x^i)},u^i(s))\right]ds\\
+\left[\frac{\partial}{\partial x^i}\sigma^i(s,x^i(s),\mathbb P_{(x^i)},u^i(s))\mathcal V^i(s)+\hat\zeta\left(s,\mathbb P_{(x^i,\mathcal V^i)}\right)+\frac{\partial}{\partial u^i}\sigma^i(s,x^i(s),\mathbb P_{(x^i)},u^i(s))\right]dB^i(s),
\end{multline}
where  
\[
\zeta(.)=\tilde\E\left\{\frac{\partial}{\partial \gamma}\mu^i(s,x^i(s),\mathbb P_{(x^i)},u^i(s))(\tilde x^i(s)).\hat {\mathcal V}^i(s)\bigg |_{x^i=x^i(s),u^i=u^i(s)}\right\},
\]
and
\[
\hat\zeta(.)=\tilde\E\left\{\frac{\partial}{\partial \gamma}\sigma^i(s,x^i(s),\mathbb P_{(x^i)},u^i(s))(\tilde x^i(s)).\hat {\mathcal V}^i(s)\bigg |_{x^i=x^i(s),u^i=u^i(s)}\right\}
\]
with $\{\tilde x^i(s),\hat{\mathcal V}^i(s)\}$ being independent copy of $\{x^i(s),\mathcal V^i(s)\}$. A Fr\'echet differentiability has been used to define $\tilde\E$. This type of functional analytic differentiability was introduced by Pierre Lions at the \emph{Coll\'ege de France} \citep{carmona2015,carmona2016}. This is a type of differentiability based on the lifting of functions $\mathcal P_2(\mathbb R^n)\ni\gamma\hookrightarrow \mathcal H(\gamma)$ into functions $\hat{\mathcal H}$ defined on Hilbert space $\mathcal H^{2,\tilde m}(\tilde\Omega;\mathbb R^n)$ on some probability space $(\tilde\Omega,\tilde{\mathcal F},\tilde{\mathbb P})$ after setting $\hat{\mathcal H}(\tilde{\bm x})=\mathcal H(\tilde{\mathbb P}_{\tilde{\bm x}})$ for all $\tilde{\bm x}\in\mathcal H^{2,\tilde m}(\tilde\Omega;\mathbb R^n)$, with $\tilde\Omega$ being a Polish space and $\tilde{\mathbb P}$ an atomless measure \citep{carmona2015}. Since there are $n$ number of agents in the system such that $n\ra\infty$, instead of considering the opinions of the other agents, agent $i$ considers the distribution of all opinions in the system $\mathcal H(\tilde{\mathbb P}_{\tilde{\bm x}})$ and makes their opinions \citep{khan2023myb0}. Therefore, in this case the distribution function of opinions $\mathcal H$ is said to be differentiable at $\bar\gamma\in\mathcal P_2(\mathbb R^n)$ if there exists a set of random opinions $\tilde{\bm x}_*$ with probability distribution $\bar\gamma$ (i.e., $\tilde{\mathbb P}_{\tilde{\bm x}_*}=\bar\gamma$). The Fr\'echet derivative of $\hat{\mathcal H}$ at $\tilde{\bm x}_*$ is the element of Hilbert space $\mathcal H^{2,\tilde m}(\tilde\Omega;\mathbb R^n)$ by identifying itself and its dual \citep{carmona2015}. One important of the Fr\'echet differentiation in this type of environment is that the distribution of derivative depends on $\bar \gamma$, not on $\tilde{\bm x}_*$. Fr\'echet derivative of $\mathcal H$ is
\[
\mathcal H(\gamma)=\mathcal H(\hat\gamma)+[D_f\hat{\mathcal H}](\tilde{\bm x}_*).(\tilde{\bm x}-\tilde{\bm x}_*)+o\left(\| \tilde{\bm x}-\tilde{\bm x}_*\|_2\right),
\]
where $[D_f\hat{\mathcal H}](\tilde{\bm x}_*)$ is the Fr\'echet derivative, the dot is the inner product of the Hilbert space over $(\tilde\Omega,\tilde{\mathcal F},\tilde{\mathbb P})$, and $\|.\|_2$ is a norm of that Hilbert space. For a deterministic function $\tilde g:\mathbb R^n\hookrightarrow \mathbb R^n$ it is well understood that Fr\'echet derivative of the form $\tilde g(\tilde{\bm x}_*)$ is uniquely defined $\hat\gamma$ almost everywhere in $\mathbb R$ \citep{cardaliaguet2012,carmona2016}. The unique equivalence class of $\tilde g$ is denoted by $\partial_\gamma\mathcal H(\bar\gamma)$. $\partial_\gamma\mathcal H(\bar\gamma)$ is the partial derivative of $\mathcal H$ at $\bar\gamma$ such that 
\[
\partial_\gamma\mathcal H(\bar\gamma)(.):\mathbb R^n\hookrightarrow \partial_\gamma\mathcal H(\bar\gamma)(y)\in\mathbb R^n.
\]
The partial derivative $\partial_\gamma\mathcal H(\bar\gamma)$ allows to express the Fr\'echet derivative $[D_f\hat{\mathcal H}](\tilde{\bm x}_*)$ as a function of any random variable $\tilde{\bm x}_*$ with its law $\bar\gamma$ irrespective of the definition of $\tilde{\bm x}_*$. If $\mathcal H(\gamma)=\int_{\mathbb R^n}g(y)\gamma(dy)=\langle g,\gamma\rangle$ for some scalar differentiable function $g$ on $\mathbb R^n$. Here $\hat{\mathcal H}(\tilde{\bm y})=\tilde\E[g(\tilde{\bm y})]$ and $D_f\hat{\mathcal H}(\tilde{\bm y}).(\tilde{\bm x})=\tilde\E[\partial g(\tilde{\bm y}).(\tilde{\bm x})]$, where $\partial_\gamma\mathcal H(\gamma)$ is thought to be a deterministic function $\partial g$ \citep{carmona2015}.

\begin{lem}\label{lem2}
	For a small $\epsilon>0$, the admissible strategy $u^{i\epsilon}$ defined as $u^{i\epsilon}(s)=u^i(s)+\epsilon v^i(s)$, with opinion of agent $i$ as $\bm x^{\epsilon}:=\bm x^{\epsilon}(s,u^i)$ following condition holds
	\[
	\lim_{\epsilon\ra 0}\E\left\{\sup_{s\in[0,t]}\left|\mathcal V^i(s)-\frac{x^{i\epsilon}(s)-x^i(s)}{\epsilon}\right|\right\}=0,
	\]
	where $x^{i\epsilon}(s)\in\bm x^\epsilon$ is agent $i$'s opinion at time s coming from the set of all opinions in the environment.	
\end{lem}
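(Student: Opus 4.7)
\medskip

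\noindent\textbf{Proof proposal.} The plan is to obtain a Gr\"onwall estimate on the error process
\[
\Delta^\epsilon(s):=\frac{x^{i\epsilon}(s)-x^i(s)}{\epsilon}-\mathcal V^i(s),
\]
and then let $\epsilon\downarrow 0$. First I would write the SDE satisfied by $\Delta^\epsilon$ by subtracting the variational SDE from $\epsilon^{-1}$ times the difference of the state SDEs for $x^{i\epsilon}$ and $x^i$. Using a first-order Taylor expansion of $\mu^i$ and $\sigma^i$ in the variables $(x^i,u^i)$, together with the L-derivative (Lions/Fr\'echet derivative on $\mathcal P_2$) in the measure argument, I would decompose the drift and diffusion differences into a linear part (matching the coefficients of $\mathcal V^i$ in the variational equation) plus a remainder $R^\epsilon_\mu$ and $R^\epsilon_\sigma$ that is $o(1)$ in a suitable sense as $\epsilon\to 0$.

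Next I would handle the mean-field term. Using Assumption~\ref{as0}(ii) (the ``rich enough'' probability space) I can lift the law $\mathbb P_{(x^i)}$ to a copy $\tilde x^i$ on an auxiliary space $(\tilde\Omega,\tilde{\mathcal F},\tilde{\mathbb P})$, so that the differences $\mu^i(\cdot,\mathbb P_{(x^{i\epsilon})},\cdot)-\mu^i(\cdot,\mathbb P_{(x^i)},\cdot)$ can be written, up to a remainder controlled by $W_2(\mathbb P_{(x^{i\epsilon})},\mathbb P_{(x^i)})$, as $\tilde\E[\partial_\gamma\mu^i(\cdots)(\tilde x^i)\cdot \tilde{\mathcal V}^i]$ plus an $o(1)$ term. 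The Lipschitz and boundedness hypotheses in Assumption~\ref{as1} on $\partial_\gamma\mu^i,\partial_x\mu^i,\partial_u\mu^i$ (and their counterparts for $\sigma^i$) guarantee that each remainder term is of the form $\epsilon\,\rho^\epsilon(s)$ with $\E\sup_{s\le t}|\rho^\epsilon(s)|^2\to 0$ as $\epsilon\to 0$; this requires a preliminary $L^2$ estimate $\E\sup_{s\le t}|x^{i\epsilon}(s)-x^i(s)|^2\le C\epsilon^2$, obtained from the standard Lipschitz/Gr\"onwall argument already used in Lemma~\ref{lem1}.

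Applying the Burkholder--Davis--Gundy inequality to the stochastic integral part of $\Delta^\epsilon$ and using Cauchy--Schwarz on the drift part yields
\[
\E\sup_{r\le s}|\Delta^\epsilon(r)|^2\le C\int_0^s\E\sup_{r'\le r}|\Delta^\epsilon(r')|^2\,dr+\eta(\epsilon),
\]
where $\eta(\epsilon)\to 0$ collects all the Taylor remainders and the mean-field remainder. Gr\"onwall's inequality then gives $\E\sup_{s\le t}|\Delta^\epsilon(s)|^2\le \eta(\epsilon)e^{Ct}$, and Jensen's inequality converts this into the $L^1$ statement of the lemma.

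The main obstacle I anticipate is the mean-field remainder term: controlling $\mu^i(\cdot,\mathbb P_{(x^{i\epsilon})},\cdot)-\mu^i(\cdot,\mathbb P_{(x^i)},\cdot)-\tilde\E[\partial_\gamma\mu^i(\tilde x^i)\cdot \tilde{\mathcal V}^i]$ uniformly in $s$ requires using the Lipschitz continuity of $\partial_\gamma\mu^i$ in $W_2$ together with the lifted-space representation, and keeping careful track of which randomness lives on $\Omega$ versus $\tilde\Omega$ so that Fubini and BDG can be applied without circularity. Once that term is bounded by $C(\epsilon+\E\sup|x^{i\epsilon}-x^i|^2/\epsilon)$, the rest is routine Gr\"onwall.
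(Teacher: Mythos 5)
Your proposal is correct and follows essentially the same route as the paper's proof: both define the error process $\mathcal V^{i\epsilon}=(x^{i\epsilon}-x^i)/\epsilon-\mathcal V^i$, expand $\mu^i$ and $\sigma^i$ to first order in $(x^i,u^i)$ and in the measure argument via the lifted $\tilde\E[\partial_\gamma(\cdot)(\tilde x^i)\cdot\tilde{\mathcal V}^i]$ representation, show the Taylor remainders vanish in $\mathcal H^{2,\tilde m}$ using the Lipschitz bounds of Assumption \ref{as1}(v), control the martingale part with Burkholder--Davis--Gundy, and close with Gr\"onwall. Your explicit mention of the preliminary bound $\E\sup_{s\le t}|x^{i\epsilon}(s)-x^i(s)|^2\le C\epsilon^2$ and of the final Jensen step from $L^2$ to $L^1$ makes two points precise that the paper leaves implicit, but the argument is the same.
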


\begin{proof}
	See the Appendix.
\end{proof}	

\begin{remark}
	Based on the Assumptions \ref{as0}-\ref{as2} Lemma \ref{lem2} guarantees the existance and the uniqueness of $\mathcal V^i(s)$. Furthermore, for any $\varrho\in[1,\infty)$ this $\mathcal V^i(s)$ satisfies $\E\left\{\sup_{s\in[0,t]}|\mathcal V^i(s)|^\varrho\right\}<\infty$. Above Lemma \ref{lem2} in some Hilbert space $\mathcal H^{2,\tilde m}$, $\mathcal V^i(s)$ is derivative of the opinion driven by $i^{th}$ agent's strategy when the direction of the derivative $v^i(s)$ is changed.
\end{remark}	

\begin{lem}\label{lem3}
	For $\epsilon>0$ small enough and the time interval $[s,s+\epsilon]\subset[0,t]$ there exists some $\delta\in[0,\epsilon)$ so that the admissible strategy function of agent $i$ denoted as $u^i(s)\hookrightarrow L^i(s,\bm x,u^i)$ is G\^ateaux differentiable and
	\[
	\frac{\partial}{\partial \delta}L^i(s,\bm x,u^i)\bigg|_{\delta=0}=\E_s\left\{\int_s^{s+\epsilon}\left[\mathcal V^i(\nu)\sum_{j\in\eta_i}\left(w_{ij}\left[x^i(\nu)-x^j(\nu)\right]+k_i\left[x^i(\nu)-x_0^i\right]\right)+u^i(\nu)v^i(\nu)\right]d\nu\right\},
	\]
	where $\E_s\{.\}=\E\{.|x^i(s)\}$ for all $\nu\in[s,s+\epsilon]$.
\end{lem}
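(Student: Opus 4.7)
The plan is to compute the finite difference quotient $\bigl[L^i(s,\mathbf x^\delta,u^{i,\delta})-L^i(s,\mathbf x,u^i)\bigr]/\delta$ directly, expand each quadratic term as a perfect algebraic identity, and then pass $\delta\downarrow 0$ using the variational convergence supplied by Lemma \ref{lem2}. Concretely, for $\delta\in[0,\epsilon)$ put $u^{i,\delta}(\nu)=u^i(\nu)+\delta v^i(\nu)$ and let $x^{i,\delta}(\nu)$ denote the corresponding solution of (\ref{1}); note that $x^{j}(\nu)$ for $j\neq i$, together with $x_0^i$, are unaffected by the perturbation of $u^i$ at the linearized level, so only $x^{i,\delta}-x^i$ needs to be controlled. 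Writing out the increment, the standard identity $(a+h)^2-a^2=2ah+h^2$ yields, for each $\nu\in[s,s+\epsilon]$, the expansion
\begin{align*}
w_{ij}\bigl[x^{i,\delta}(\nu)-x^j(\nu)\bigr]^2-w_{ij}\bigl[x^i(\nu)-x^j(\nu)\bigr]^2 &= 2w_{ij}\bigl[x^i(\nu)-x^j(\nu)\bigr]\bigl[x^{i,\delta}(\nu)-x^i(\nu)\bigr]+w_{ij}\bigl[x^{i,\delta}(\nu)-x^i(\nu)\bigr]^2,
\end{align*}
with an analogous expansion for the stubbornness term $k_i[x^i(\nu)-x_0^i]^2$, while the control square yields $[u^{i,\delta}]^2-[u^i]^2=2\delta u^i v^i+\delta^2(v^i)^2$.

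Next I would divide by $\delta$ and split the result into a main term (linear in $x^{i,\delta}-x^i$ and in $\delta v^i$) and a remainder (quadratic in $x^{i,\delta}-x^i$ and $O(\delta)$ from $(v^i)^2$). On the main term, I substitute the difference quotient $(x^{i,\delta}-x^i)/\delta$ and invoke Lemma \ref{lem2} to obtain, in the $L^1(\Omega)$-sense uniformly in $\nu\in[s,s+\epsilon]$, the convergence of the difference quotient to $\mathcal V^i(\nu)$. Combining this with the cross-factors $w_{ij}[x^i-x^j]$ and $k_i[x^i-x_0^i]$, which are bounded in $L^2$ by Lemma \ref{lem1}, gives the first group of terms displayed in the lemma, while the control part produces $u^i(\nu)v^i(\nu)$ directly.

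For the remainder I would show that it vanishes: since $x^{i,\delta}-x^i=\delta\mathcal V^i+o(\delta)$ in the $L^1(\Omega)$-uniform sense of Lemma \ref{lem2}, and since Lipschitz-type estimates together with Assumption \ref{as1} and the moment bound in Lemma \ref{lem1} propagate to an $L^2$ bound $\E\sup_{\nu\in[s,s+\epsilon]}|x^{i,\delta}(\nu)-x^i(\nu)|^2=O(\delta^2)$, the quadratic remainder is $O(\delta)\to 0$. The $\delta(v^i)^2$ piece is handled by the admissibility assumption $v^i\in\mathcal H^{2,\tilde m}$. These bounds, together with the $\E|\mathcal V^i|^\varrho$ moments from the remark after Lemma \ref{lem2}, provide the dominating integrable majorant needed to exchange $\lim_{\delta\downarrow 0}$ with $\E_s\int_s^{s+\epsilon}(\cdot)\,d\nu$ via the dominated convergence theorem.

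The principal obstacle will be the rigorous $L^2$ control of $x^{i,\delta}-x^i$ over $[s,s+\epsilon]$: Lemma \ref{lem2} only furnishes an $L^1$ statement, so I would sharpen it by running a Gronwall argument on the McKean-Vlasov SDE (\ref{1}), exploiting the Lipschitz continuity of $\mu^i,\sigma^i$ in $(\gamma,x^i,u^i)$ from Assumption \ref{as1}(iii) together with the $W_2$-Lipschitz dependence on the law, to produce the needed $O(\delta^2)$ uniform bound. Once that is in place, assembling the linear contributions gives exactly the claimed G\^ateaux derivative, and positivity of $\delta\in[0,\epsilon)$ ensures the one-sided derivative at $\delta=0$ agrees with the displayed expression.
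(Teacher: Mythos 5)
Your proposal is correct, and it reaches the same limit object by the same essential chain (perturb $u^i$ in the direction $v^i$, identify the first-order term in $x^{i\delta}-x^i$ with $\delta\,\mathcal V^i$ via Lemma \ref{lem2}, pass to the limit inside $\E_s\int_s^{s+\epsilon}$), but the mechanics of the expansion differ from the paper's. The paper writes the increment of $L^i$ as $\int_0^1\frac{d}{d\beta}L^i\bigl[\nu,x^{-i},x^i+\delta\beta(\mathcal V^i+\mathcal V^{i\delta}),u^i+\delta\beta v^i\bigr]d\beta$ and applies the chain rule, so the argument only needs $L^i$ to be $C^1$ with the Lipschitz/boundedness properties of Assumption \ref{as1}(v); the limit is then taken on the $\beta$-parametrized partial derivatives. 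You instead exploit the fact that $L^i$ is exactly quadratic, so the identity $(a+h)^2-a^2=2ah+h^2$ gives an \emph{exact} decomposition into a linear main term and an explicit quadratic remainder. What your route buys is transparency and a cleaner accounting of the error: the remainder is literally $w_{ij}[x^{i\delta}-x^i]^2/\delta+k_i[x^{i\delta}-x^i]^2/\delta+\delta(v^i)^2$, and you correctly identify that killing it requires an $L^2$ bound $\E\sup_\nu|x^{i\delta}(\nu)-x^i(\nu)|^2=O(\delta^2)$, which Lemma \ref{lem2} (an $L^1$-type statement) does not directly give; your proposed Gronwall argument on Equation (\ref{1}) using Assumption \ref{as1}(iii) is the right way to supply it, and is in fact the same estimate used in the proof of Lemma \ref{lem1}. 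You are, if anything, more careful than the paper about the domination needed to interchange $\lim_{\delta\downarrow 0}$ with $\E_s\int_s^{s+\epsilon}(\cdot)\,d\nu$. The trade-off is that your argument is tied to the quadratic form of the cost, whereas the paper's $\int_0^1 d\beta$ device generalizes to any cost satisfying Assumption \ref{as1}(v). One cosmetic point: your computation yields $\sum_{j\in\eta_i}w_{ij}[x^i-x^j]+k_i[x^i-x_0^i]$ with the stubbornness term outside the sum over $j$, which is the correct derivative of $L^i$; the placement of $k_i[x^i(\nu)-x_0^i]$ inside the sum in the lemma's display appears to be a typographical artifact of the paper, not a defect of your argument.
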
	
\begin{proof}
	See the Appendix.
\end{proof}

\begin{remark}
	Above Lemma determines the directional derivative of the cost functional $L^i(s,\bm x,u^i)$ for some $\epsilon>0$ small enough, $[s,s+\epsilon]\subset[0,t]$.
\end{remark}

\section{The Adjoint Processes.}
Let for $s\in[0,t]$, $g(s):[p,q]\ra\mathcal{C}$ be an opinion dynamics of $i^{th}$ agent with initial and terminal points $g(p)$ and $g(q)$ respectively, such that, the line path integral is $\int_{\mathcal{C}} f(\gamma) ds=\int_{p}^q f(g(s))|g'(s)| ds$,  where $g'(s)=\partial g(s)/\partial s$. In this paper a functional path integral approach is considered where the domain of the integral is assumed to be the space of functions \citep{pramanik2020}. In \cite{feynman1948} theoretical physicist Richard Feynman introduced \emph{Feynman path integral}, and popularized it in quantum mechanics. Furthermore, mathematicians develop the measurability of this functional integral and in recent years it has become popular in probability theory \citep{fujiwara2017}. In quantum mechanics, when a particle moves from one point to another, between those points it chooses the shortest path out of infinitely many paths such that some of them touch the edge of the universe. After introducing $n$ number of equal lengthed small intervals $[s,s+\epsilon]\subset[0,t]$ with $\epsilon>0$ small enough, and using the \emph{Riemann–Lebesgue} lemma if at time $s$ one particle touches the end of the universe, then at a later time point, it would come back and go to the opposite side of the previous direction to make the path integral a measurable function \citep{bochner1949}. Similarly, since agent $i$ has infinitely many opinions, they choose the opinion associated with least cost given by the constraint explained in Equation (\ref{1}). Furthermore, the Feynman approach is useful in both linear and non-linear stochastic differential equation systems where constructing an HJB equation numerically is quite difficult \citep{belal2007}. 

\begin{definition}\label{d0}
	For a particle, let $\hat{\mathcal{L}}[s,y(s),\dot y(s)]=(1/2) \hat m\dot y(s)^2-\hat{V}(y)$ be the Lagrangian in classical sense in generalized coordinate $y$ with mass $\hat m$ where $(1/2)\hat m\dot y^2$ and $\hat V(y)$ are kinetic and potential energies respectively. The transition function of Feynman path integral corresponding to the classical action function $Z^*=\int_0^t \hat{\mathcal{L}}(s,y(s),\dot y(s)) ds$  is defined as $\Psi(y)=\int_{\mathbb R} \exp\{{Z^*}\} \mathcal{D}_Y $, where $\dot y=\partial y/\partial s$ and $\mathcal{D}_Y$ is an approximated Riemann measure which represents the positions of the particle at different time points $s$ in $[0,t]$ \citep{pramanik2020}.
\end{definition}

\begin{remark}
	Definition \ref{d0} describes the construction of the Feynman path integral in physical sense. This definition is important to construct the stochastic Lagrangian of agent $i$.
\end{remark}	

From Equation (45) of \cite{ewald2024adaptation} for agent $i$,  the stochastic Lagrangian at time $s\in[0,t]$ is defined as
\begin{multline}\label{3}
\hat{\mathcal{L}}^i\left(s,\bm x,\mathbb P_{(\bm x)},\lambda^i,u^i\right)=\E\biggr\{\mbox{$\frac{1}{2}$}\int_0^t \bigg\{\sum_{j\in\eta_i}w_{ij}\left[x^i(s)-x^j(s)\right]^2+k_i\left[x^i(s)-x_0^i\right]^2+\left[u^i(s)\right]^2\bigg\}ds\\
+\int_0^t\left[x^i(s)-x_0^i-\int_0^s[\mu^i[\nu,x^i(\nu),\mathbb P_{(x^i)},u^i(\nu)]d\nu-\sigma^i[\nu,x^i(\nu),\mathbb P_{(x^i)},u^i(\nu)]dB^i(\nu)]\right] d\lambda^i(s)\biggr\},
\end{multline}
where $\lambda^i(s)$ is the Lagrangian multiplier. 

\begin{prop}\label{p0.0}
	\citep{love1993note,ewald2024adaptation}. Suppose for agent $i$, $u^{i*}(s)$ is an admissible strategy and $x^{i*}(s)$ is the corresponding opinion. Furthermore, assume that there exists a progressively measurable Lagrangian multiplier $\lambda^{i*}(s)$ so that following two conditions hold,
	\begin{align}
	\frac{\partial \mathcal L^i}{\partial x^i}\left[s,\bm x^*(s),\mathbb P_{(\bm x^*)},\lambda^{i*}(s),u^{i*}(s)\right]&=0,\label{5.0}\\
	\frac{\partial \mathcal L^i}{\partial u^i}\left[s,\bm x^*(s),\mathbb P_{(\bm x^*)},\lambda^{i*}(s),u^{i*}(s)\right]&=0.\label{5.1}
	\end{align}
	Moreover, assume that the mapping
	\begin{equation}\label{5.2}
	(x^i,u^i)\mapsto L^i(s,\bm x,u^i)+\mu^i[\nu,x^i,\mathbb P_{(x^i)},u^i]\lambda^{i*}(s)+\sigma^i[\nu,x^i,\mathbb P_{(x^i)},u^i]\frac{d\lambda^{i*}(s)dB(s)}{ds},
	\end{equation}
	is concave in $s\in[0,t]$ almost surely. Therefore, the admissible strategy $u^{i*}(s)$ is an optimal strategy of agent $i$.
\end{prop}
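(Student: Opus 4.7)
The plan is to establish the optimality of $u^{i*}$ by reducing the comparison $\E\{L^i(u^i)\}$ vs.\ $\E\{L^i(u^{i*})\}$ to a comparison of augmented Lagrangians and then exploiting the concavity hypothesis together with the first-order conditions (\ref{5.0})--(\ref{5.1}), in the style of the Mangasarian--Arrow sufficiency argument adapted to the McKean--Vlasov setting.

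First I would observe that if $x^i$ solves (\ref{1}) for a given admissible $u^i$, then the constraint bracket $x^i(s)-x_0^i-\int_0^s\mu^i\,d\nu-\int_0^s\sigma^i\,dB^i(\nu)$ is identically zero $\P$-a.s., so $\hat{\mathcal{L}}^i(u^i)=\E\{\int_0^t L^i(s,\bm{x},u^i)\,ds\}$ along any admissible trajectory; in particular the same identity holds at the candidate $(x^{i*},u^{i*})$. Proving optimality therefore reduces to showing $\hat{\mathcal{L}}^i(u^i)\ge \hat{\mathcal{L}}^i(u^{i*})$ for every admissible $u^i$ with its associated unique solution $x^i$ from Lemma \ref{lem1}.

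Next I would treat the integrand $H^i(s,x^i,u^i)$ appearing in (\ref{5.2}) as a running Hamiltonian and apply the concavity hypothesis to obtain, for each $s\in[0,t]$ almost surely,
\[
H^i(s,x^i,u^i)-H^i(s,x^{i*},u^{i*})\le \partial_{x^i}H^i\big|_*\cdot(x^i(s)-x^{i*}(s))+\partial_{u^i}H^i\big|_*\cdot(u^i(s)-u^{i*}(s)),
\]
with derivatives evaluated at $(x^{i*}(s),u^{i*}(s))$ and the Lions/Fréchet derivatives in the measure argument taken at $\P_{(x^{i*})}$. An integration by parts on the $d\lambda^{i*}$-term in (\ref{3}) shows that, modulo boundary contributions, the $(x^i,u^i)$-sensitive part of $\hat{\mathcal{L}}^i$ coincides with the time integral of $H^i$, so conditions (\ref{5.0})--(\ref{5.1}) state precisely that $\partial_{x^i}H^i|_*=\partial_{u^i}H^i|_*=0$. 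The right-hand side of the pointwise inequality then vanishes after taking expectation, and integration over $[0,t]$ yields $\hat{\mathcal{L}}^i(u^i)\ge \hat{\mathcal{L}}^i(u^{i*})$, which by the reduction above translates to the optimality of $u^{i*}$.

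The main obstacle I anticipate is giving rigorous meaning to the formal object $\sigma^i\,(d\lambda^{i*}/ds)(dB^i/ds)$ in (\ref{5.2}); this shorthand stands for a quadratic-covariation contribution that is precise only once $\lambda^{i*}$ is represented as an Itô semimartingale $d\lambda^{i*}=\alpha^{i*}\,ds+\beta^{i*}\,dB^i$, with $\beta^{i*}$ playing the role of the martingale integrand that couples with $\sigma^i$ inside the Hamiltonian, as in the Bismut-style derivation of Pontryagin's principle. A secondary difficulty is that the mean-field law $\P_{(x^i)}$ itself moves with $u^i$, so the derivatives of $L^i$, $\mu^i$, $\sigma^i$ in the measure argument must be interpreted through the Lions lifting machinery set up in Section 3; Assumption \ref{as1} together with the integrability bound of Lemma \ref{lem1} is what justifies interchanging expectation with differentiation and passing stochastic integrals through limits. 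Once these technical points are in place, the argument collapses to the classical constrained-sufficiency result cited as \cite{love1993note,ewald2024adaptation}.
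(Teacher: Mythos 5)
You should first be aware that the paper does not actually prove Proposition \ref{p0.0}: it is imported wholesale from the cited sources \citep{love1993note,ewald2024adaptation}, and the appendix contains proofs only of Lemmas \ref{lem1}--\ref{lem4} and Proposition \ref{p0}. So there is no in-paper argument to compare against; your Mangasarian--Arrow sufficiency sketch (constraint bracket vanishes along admissible trajectories, supporting-hyperplane inequality from concavity, first-order conditions kill the linear terms) is the standard and almost certainly intended route, and your identification of the formal object $\sigma^i\,d\lambda^{i*}\,dB/ds$ with the martingale integrand $\beta^{i*}$ of an It\^o representation of $\lambda^{i*}$ is the right reading.

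That said, your write-up has two genuine problems. First, a sign error that breaks the chain: from concavity of \eqref{5.2} and the vanishing of $\partial_{x^i}H^i|_*$ and $\partial_{u^i}H^i|_*$ you correctly derive $H^i(s,x^i,u^i)\le H^i(s,x^{i*},u^{i*})$ pointwise, which upon integration gives $\hat{\mathcal{L}}^i(u^i)\le\hat{\mathcal{L}}^i(u^{i*})$ --- the candidate is a \emph{maximizer} of the augmented Lagrangian --- yet you then assert $\hat{\mathcal{L}}^i(u^i)\ge\hat{\mathcal{L}}^i(u^{i*})$ and call this minimality. As written the conclusion does not follow from your own inequality. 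For the minimization problem posed in Section 2 the hypothesis would have to be convexity of \eqref{5.2}; the paper itself is inconsistent here (Definition \ref{d2} writes the optimality inequality with $\ge$, i.e., as a maximization), but your proof must at least be internally coherent, and it is not. Second, the supporting-hyperplane inequality you invoke includes Lions derivatives in the measure argument, but the hypothesis of the proposition only asserts concavity in $(x^i,u^i)$; since perturbing $u^i$ also moves the law $\mathbb{P}_{(x^i)}$, the mean-field sufficiency theorem requires joint concavity including the measure coordinate (the extra term $\tilde\E[\partial_\gamma(\cdot)]$ that appears in the adjoint equation of Definition \ref{d1} is exactly the trace of this dependence). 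Assumption \ref{as1} and Lemma \ref{lem1} supply only Lipschitz and moment bounds, not concavity in $\gamma$, so the step where you ``take the Lions/Fr\'echet derivatives in the measure argument at $\mathbb{P}_{(x^{i*})}$'' and still land in the concavity inequality is unjustified; you name this difficulty but do not close it.
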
	
\begin{remark}
	Following \cite{ewald2024adaptation} we know that, if $u^{i*}$ is the solution of the system represented by Equations (\ref{0}), (\ref{1}) and Condition \ref{5.2}, then there exists a progressively measurable It\^o process $\lambda^{i*}(s)$ such that Equations \ref{5.0} and \ref{5.1} hold. In Proposition \ref{p0.0} the Lagrangian multiplier $\lambda^{i*}(s)$ is indeed a progressively measurable process.
\end{remark}

Since at the beginning of the continuous interval $[s,s+\epsilon]$ for all $\epsilon\downarrow 0$, agent $i$ does not have any future information to build their opinion. Thus, $\E_{[s,s+\epsilon]}\{.\}\simeq\E_s\{.\}=\E\{.|x^i(s)\}$. Furthermore, as $\epsilon\downarrow 0$, the Lagrangian expressed in Equation \eqref{3} becomes
\begin{align}\label{5.3}
&\mathcal L^i\left(s,\bm x,\mathbb P_{(\bm x)},\lambda^i,u^i\right)\notag\\
&:=\lim_{\epsilon\downarrow 0}\E_s\biggr\{\mbox{$\frac{1}{2}$}\int_s^{s+\epsilon} \bigg\{\sum_{j\in\eta_i}w_{ij}\left[x^i(\nu)-x^j(\nu)\right]^2+k_i\left[x^i(\nu)-x_0^i\right]^2+\left[u^i(\nu)\right]^2\bigg\}d\nu\notag\\
&\hspace{.1cm}+\int_s^{s+\epsilon}\left[x^i(\nu)-x_0^i-\int_s^{\nu}[\mu^i[\hat\nu,x^i(\hat\nu),\mathbb P_{(x^i)},u^i(\hat\nu)]d\hat\nu-\sigma^i[\hat\nu,x^i(\hat\nu),\mathbb P_{(x^i)},u^i(\hat\nu)]dB^i(\hat\nu)]\right] d\lambda^i(\nu)\biggr\}\notag\\
&\approx\E_s\biggr\{\mbox{$\frac{1}{2}$}\bigg\{\sum_{j\in\eta_i}w_{ij}\left[x^i(s)-x^j(s)\right]^2+k_i\left[x^i(s)-x_0^i\right]^2+\left[u^i(s)\right]^2\bigg\}ds\notag\\
&\hspace{1cm}+\left[x^i(s)-x_0^i-\mu^i[s,x^i(s),\mathbb P_{(x^i)},u^i(s)]-\sigma^i[s,x^i(s),\mathbb P_{(x^i)},u^i(s)]\right]d\lambda^i(s)\bigg\},
\end{align}
where $[s,\nu]\subset[s,s+\epsilon]$. 

The adjoint process of the system is
\begin{multline}\label{5}
d\lambda_1^i(s)=-\biggr[\frac{\partial}{\partial x}\mu^i[s,x^i(s),\mathbb P_{(x^i)},u^i(s)]\lambda_1^i(s)+\frac{\partial}{\partial x}\sigma^i[s,x^i(s),\mathbb P_{(x^i)},u^i(s)]\lambda_1^i(s)\\+\frac{\partial}{\partial x}L^i(s,\bm x,u^i)\biggr]ds+\lambda_2^i(s)dB^i(s),
\end{multline}
where $\lambda_1^i(s)$ and $\lambda_2^i(s)$ are two new dual variables belong to the dual spaces of the spaces from where $\mu^i$ and $\sigma^i$ take their values such that $\lambda_1^i\in\mathbb R$ like $x^i$, and that $\lambda_2^i\in\mathbb R^2$. Notice that the deterministic Hamiltonian of the system is
\[
H^i\left(s,\bm x,\mathbb P_{(\bm x)},\lambda_1^i,\lambda_2^i,u^i\right)=L^i(s,\bm x,u^i)+\lambda_1^i(s)\mu^i[s,x^i(s),\mathbb P_{(x^i)},u^i(s)]+\lambda_2^i(s)\sigma^i[s,x^i(s),\mathbb P_{(x^i)},u^i(s)].
\]
The differences between the above Hamiltonian and the Equation (\ref{3}) are the presence of $\Delta x^i(s):=x^i(s)-x^i_0$, $\lambda^i(s)$, $\E_s\{.\}$, $ds$ and $dB^i(s)$. If $\Delta x^i(s)\ra 0$, under deterministic case Hamiltonian and Lagrangian share a similar structure. Since the Feynman path integral approach has been used, $dB^i(s)$ determines true fluctuation of $\mathcal L^i$ and further inclusion of $\E_s$ facilitates the conditional expectation of a forward looking process for $[s,s+\epsilon]$. The Lagrangian used in Equation (\ref{3}) is stochastic but the usual Hamiltonian of the control theory is deterministic.

\begin{definition}\label{d1}
	For a set of admissible strategies $\bm u^i=\{u^i(s)\}_{s=0}^t\in\mathcal U([0,t])$ of agent $i$, denote $\bm x^{i}(s)=\bm x^{i}(s,u^{i})$ the set of corresponding controlled opinions, and let $(\bm \lambda_1^i,\bm \lambda_2^i)=\{ \lambda_1^i(s),\lambda_2^i(s)\}_{s=0}^t$ be any coupled adjoint progressively measurable stochastics processes satisfying
	\[
	d\lambda_1^i(s)=-\partial_x	H^i(s,\bm x,\mathbb P_{\bm x},\lambda^i_1,\lambda^i_2,u^i)+\lambda^i_2(s)dB^i(s)-\tilde\E\left[\partial_\gamma\tilde{H}^i(s,\tilde{\bm x},\mathbb P_{\tilde{\bm x}},\tilde\lambda^i_1,\tilde\lambda^i_2,\tilde u^i)\right][x^i(s)],
	\]
	where $(\tilde{\bm x},\tilde\lambda^i_1,\tilde\lambda^i_2,\tilde u^i,\tilde{\mathcal L}^i)$ is the independent copy of $(\bm x,\lambda^i_1,\lambda^i_2,u^i,\mathcal L^i)$, and $\tilde\E$ is the expectation of the independent copy. In the adjoint equation $\partial_x=\partial/\partial x$ and $\partial_\gamma=\partial/\partial \gamma$.
\end{definition}

\begin{remark}
	If $\mu^i$ and $\sigma^i$ are independent with the marginal distributions of the process, the extra terms appearing in the adjoint equation in Definition \ref{d1} vanishes and indeed this equation becomes the classical adjoint equation. 
\end{remark}
In the present set up the adjoint equation can be written as
\begin{multline*}
d\lambda_1^i(s)=-\biggr[\partial_x\mu^i[s,x^i(s),\mathbb P_{(x^i)},u^i(s)]\lambda_1^i(s)\\+\partial_x\sigma^i[s,x^i(s),\mathbb P_{(x^i)},u^i(s)]\lambda_1^i(s)\biggr]ds+\partial_xL^i(s,\bm x,u^i)+\lambda_2^i(s)dB^i(s)\\
-\tilde\E\left\{\left[\partial_\gamma\tilde{\mu}^i((s,\tilde{\bm x},\bm x,\tilde\lambda^i_1,\tilde\lambda^i_2,\tilde u^i))+\partial_\gamma\tilde{\sigma}^i(s,\tilde{\bm x},\bm x,\tilde\lambda^i_1,\tilde\lambda^i_2,\tilde u^i)\right]ds+\partial_\gamma\tilde{L}^i(s,\tilde{\bm x},\bm x,\tilde\lambda^i_1,\tilde\lambda^i_2,\tilde u^i)\bigg|_{\bm x=\bm x(s)}\right\}.
\end{multline*}	

It is important to note that for a given admissible strategy $u^i\in\mathcal U([0,t])$ and the controlled opinion $x^i$, despite the boundedness assumptions of the partial derivatives of $\mu^i$ and $\sigma^i$, and despite that the first part of the above adjoint equation being linear with respect to $\lambda_1^i(s)$ and $\lambda_2^i(s)$, existence and uniqueness of a solution $\{\bm\lambda_1^{i*},\bm\lambda_2^{i*}\}$ of the adjoint equation can not be determined by standard process (for example Theorem 2.2 in \cite{carmona2016}). The main reason is the joint distribution of solution  process appears in $\mu^i$ and $\sigma^i$ \citep{carmona2015,carmona2016}.

\begin{lem}\label{lem4}
	Under (v) of Assumption \ref{as1} there exists a unique adapted solution $(\bm\lambda_1^{i*},\bm\lambda_2^{i*})$ of the coupled adjoint progressively measurable stochastics processes satisfying
	\[
	d\lambda_1^i(s)=-\partial_x	\mathcal L^i(s,\bm x,\mathbb P_{\bm x},\lambda^i_1,\lambda^i_2,u^i)+\lambda^i_2(s)dB^i(s)-\tilde\E\left[\partial_\gamma\tilde{\mathcal L}^i(s,\tilde{\bm x},\mathbb P_{\tilde{\bm x}},\tilde\lambda^i_1,\tilde\lambda^i_2,\tilde u^i)\right][x^i(s)],
	\]
	in 	$\mathcal H_{\bm\lambda_1}^{2,\tilde m}\bigotimes\mathcal H_{\bm\lambda_2}^{2,\tilde m}$, where
	\[
	\mathcal H_{\bm\lambda_1}^{2,\tilde m}:=\left\{\bm\lambda_1^i\in\mathcal H_{\bm\lambda_1}^{0,\tilde m};\ \E\int_0^t|\lambda_1^i(s)|^2ds<\infty\right\},
	\]
	and
	\[
	\mathcal H_{\bm\lambda_2}^{2,\tilde m}:=\left\{\bm\lambda_2^i\in\mathcal H_{\bm\lambda_1}^{0,\tilde m};\ \E\int_0^t|\lambda_2^i(s)|^2ds<\infty\right\}.
	\]
\end{lem}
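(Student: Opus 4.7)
The plan is to treat the adjoint equation as a mean-field backward stochastic differential equation (MF-BSDE) in the unknown pair $(\lambda_1^i,\lambda_2^i)$, and to establish existence and uniqueness via a contraction-mapping argument on the product Hilbert space $\mathcal{H}^{2,\tilde m}_{\bm\lambda_1}\otimes\mathcal{H}^{2,\tilde m}_{\bm\lambda_2}$ endowed with an exponentially weighted norm. First, I would rewrite the equation in canonical BSDE form
\[
-d\lambda_1^i(s)=f^i\!\left(s,x^i(s),\mathbb P_{(x^i)},\lambda_1^i(s),\lambda_2^i(s),u^i(s)\right)ds-\lambda_2^i(s)\,dB^i(s),
\]
with the natural transversality terminal condition $\lambda_1^i(t)=0$ inherited from the finite-horizon control problem, where the driver $f^i$ bundles the three terms $\partial_x\mu^i\lambda_1^i+\partial_x\sigma^i\lambda_1^i+\partial_x L^i$ together with the Fréchet/Lions derivative piece $\tilde{\mathbb E}[\partial_\gamma\tilde{\mathcal L}^i](x^i(s))$.

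Next, I would check that, under hypothesis (v) of Assumption~\ref{as1}, the driver $f^i$ satisfies the standard Lipschitz and linear-growth conditions required by mean-field BSDE theory. The uniform bound $|\partial_\gamma\omega^i|+|\partial_x\omega^i|+|\partial_u\omega^i|\le C^*$ and the Lipschitz continuity of each of these derivatives (in both state and measure arguments, the latter via the Wasserstein distance $W_2$) translate directly into a joint Lipschitz estimate of the form
\[
|f^i(s,x,\gamma,p,q,u)-f^i(s,x',\gamma',p',q',u')|\le K\bigl(|x-x'|+W_2(\gamma,\gamma')+|p-p'|+|q-q'|+|u-u'|\bigr),
\]
uniformly in $s\in[0,t]$. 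The a priori integrability of the forward state $x^i$ supplied by Lemma~\ref{lem1} then guarantees that the free term $\partial_x L^i(s,\bm x,u^i)$ belongs to $\mathcal{H}^{2,\tilde m}$, which is what is needed to keep the iteration inside the right space.

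I would then set up the Picard map $\Phi:(\bar\lambda_1^i,\bar\lambda_2^i)\mapsto(\lambda_1^i,\lambda_2^i)$ that, for a frozen pair $(\bar\lambda_1^i,\bar\lambda_2^i)\in\mathcal{H}^{2,\tilde m}_{\bm\lambda_1}\otimes\mathcal{H}^{2,\tilde m}_{\bm\lambda_2}$, returns the unique solution of the classical (non–mean-field) linear BSDE obtained by substituting the frozen law $\mathbb P_{(\bar\lambda_1^i,\bar\lambda_2^i)}$ into the Lions-derivative term. Existence and uniqueness at each Picard step is a direct consequence of the Pardoux–Peng theorem applied on $[0,t]$ with terminal condition $0$. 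Applying Itô's formula to $e^{\beta s}|\lambda_1^i(s)-\lambda_1^{i\prime}(s)|^2$ for two iterates and choosing $\beta$ large enough (depending only on $C^*$), the Lipschitz bound above, combined with the martingale/BDG inequality applied to the $\lambda_2^i$-martingale part and the elementary bound $W_2(\mathbb P_{\bar\lambda_1^i},\mathbb P_{\bar\lambda_1^{i\prime}})^2\le \mathbb E|\bar\lambda_1^i-\bar\lambda_1^{i\prime}|^2$, yields a contraction estimate in the equivalent weighted norm $\|Y\|_\beta^2:=\mathbb E\int_0^t e^{\beta s}|Y(s)|^2 ds$. The Banach fixed-point theorem then produces the unique pair $(\bm\lambda_1^{i*},\bm\lambda_2^{i*})$, and the final $L^2$ estimate comes from Gronwall applied to $\mathbb E|\lambda_1^i(s)|^2+\mathbb E\int_s^t|\lambda_2^i(r)|^2dr$.

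The main obstacle will be the mean-field term $\tilde{\mathbb E}[\partial_\gamma\tilde{\mathcal L}^i](x^i(s))$: it couples the driver not only to the paths of $(\lambda_1^i,\lambda_2^i)$ but also to the joint law $\mathbb P_{(x^i,\lambda_1^i,\lambda_2^i)}$, so the argument cannot be closed by the classical Pardoux–Peng contraction alone. The resolution is the lifted-Hilbert-space representation discussed before Lemma~\ref{lem2}, which identifies $W_2$-Lipschitz continuity in the measure variable with ordinary Lipschitz continuity in $\mathcal{H}^{2,\tilde m}(\tilde\Omega;\mathbb R^n)$; this is precisely the step where hypothesis (v) of Assumption~\ref{as1} is indispensable, since the uniform boundedness of $\partial_\gamma\omega^i$ together with its Lipschitz property is what allows the Wasserstein contribution to be absorbed by the choice of $\beta$ without inflating the contraction constant beyond one.
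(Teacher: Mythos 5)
Your proposal follows essentially the same route as the paper's proof: freeze the law/adjoint argument in the mean-field term, invoke Pardoux's existence result for the resulting linear backward equation to define a map on $\mathcal H_{\bm\lambda_1}^{2,\tilde m}\bigotimes\mathcal H_{\bm\lambda_2}^{2,\tilde m}$, and obtain a strict contraction in an exponentially weighted $L^2$ norm via It\^o's formula with a sufficiently large weight determined by the constant $C^*$ from Assumption \ref{as1}(v). The only cosmetic differences are that you state the terminal condition $\lambda_1^i(t)=0$ explicitly and phrase the measure-Lipschitz step through the lifted Hilbert-space identification, both of which are consistent with what the paper does implicitly.
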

\begin{proof}
	See the Appendix.
\end{proof}

\begin{remark}
	Lemma \ref{lem4} states that for each admissible strategy $u^i$, there exists a couple of adjoint processes $\left(\bm\lambda_1^i,\bm\lambda_2^i\right)$ so that $\E\left\{\sup_{s\in[0,t]}\left|\lambda_1^i(s)\right|^2\right\}+\E\left\{\int_0^t\left|\lambda_2^i(s)\right|^2ds\right\} $.	
\end{remark}	

For a normalizing constant $L_\epsilon^i>0$  define a transition function from $s$ to $s+\epsilon$ as
\begin{equation}\label{lin8}
\Psi_{s,s+\varepsilon}^i(x^i):=\frac{1}{L_\varepsilon^i} \int_{\mathbb{R}^{n}}\exp[-\varepsilon \mathcal{A}_{s,s+\varepsilon}(x^i)]\Psi_s^i(x^i)dx^i(s),
\end{equation}
where $\Psi_s^i(x^i)$ is the value of the transition function based on opinion $x^i$ at time $s$ with the initial condition $\Psi_0^i(x^i)=\Psi_0^i$. The penalization constant $L_\epsilon^i$ is chosen in such a way that the right hand side of the expression \ref{lin8} becomes unity. Therefore, the action function of agent $i$ is,
\begin{multline*}
\mathcal{A}_{s,s+\varepsilon}(x^i)=
\int_{s}^{s+\varepsilon}\E_\nu\biggr\{\mbox{$\frac{1}{2}$} \bigg\{\sum_{j\in\eta_i}w_{ij}\left[x^i(\nu)-x^j(\nu)\right]^2+k_i\left[x^i(\nu)-x_0^i\right]^2+\left[u^i(\nu)\right]^2\bigg\}d\nu\\+h^i[\nu+\Delta \nu,x^i(\nu)+\Delta x^i(\nu)]d\lambda^i(\nu)\biggr\},
\end{multline*}
where $h^i[\nu+\Delta \nu,x^i(\nu)+\Delta x^i(\nu)]\in C^2([0,t]\times\mathbb{R})$ is an It\^o process so that,
\begin{equation*}
h^i[\nu+\Delta \nu,x^i(\nu)+\Delta x^i(\nu)]\approx x^i(\nu)-x_0^i-\mu^i[\nu,x^i(\nu),\mathbb P_{(x^i)},u^i(\nu)]d\nu-\sigma^i[\nu,x^i(\nu),\mathbb P_{(x^i)},u^i(\nu)]dB^i(\nu).
\end{equation*}
The action $\mathcal{A}_{s,s+\epsilon}(x^i)$ tells us within $[s,s+\epsilon]$ the action of agent $i$ depends on their opinion  $x^i$ under a feedback structure.

\begin{definition}\label{d2}
	For optimal opinion  $x^{i*}(s)$ and there exists an optimal admissible control $u^{i*}(s)$ such that for all $s\in[0,t]$ the conditional expectation of the cost function is
	\begin{multline*}
	\E_0 \left[\int_{0}^t\mbox{$\frac{1}{2}$} \bigg\{\sum_{j\in\eta_i}w_{ij}\left[x^{i*}(s)-x^{j*}(s)\right]^2+k_i\left[x^{i*}(s)-x_0^i\right]^2+\left[u^{i*}(s)\right]^2\bigg\}ds\bigg|\mathcal F_0^{x^*} \right]
	\\\geq\E_0 \left[\int_{0}^t\mbox{$\frac{1}{2}$} \bigg\{\sum_{j\in\eta_i}w_{ij}\left[x^i(s)-x^j(s)\right]^2+k_i\left[x^i(s)-x_0^i\right]^2+\left[u^i(s)\right]^2\bigg\}ds\bigg|\mathcal F_0^{x}\right],
	\end{multline*}
	such that Equation (\ref{1}) holds, where $\mathcal F_0^{x^*}$ is the optimal filtration satisfying $\mathcal F_0^{x^*}\subset\mathcal F_0^{x}$.
\end{definition}

\section{Main results.}

Consider for an opinion space $X_0=\{\mathbf x(s):s\in[0,t]\}$, and agent $i$'s control space $\mathcal U$ there exists an admissible control $u^i:[0,t]\times X_0\ra\mathcal U$ and for all $i\in N$ define the integrand of the cost function $L^i(.)$ as
\[
L^i(s,\mathbf x,u^i)=\E\left\{\mbox{$\frac{1}{2}$}\int_0^t \bigg(\sum_{j\in\eta_i}w_{ij}\left[x^i(s)-x^j(s)\right]^2+k_i\left[x^i(s)-x_0^i\right]^2+\left[u^i(s)\right]^2\bigg)ds\right\}.
\]

\begin{prop}\label{p0}
	For agent $i$\\
	(i).  the feedback control $u^i(s,x^i):[0,t]\times \mathbb{R}\times\mathcal U\ra\mathbb{R}\times\mathcal U$ is a continuously differentiable function,\\
	(ii). The cost functional $L^i(s,\mathbf x,u^i):[0,t]\times\mathbb{R}^n\times\mathbb{R}\ra\mathbb{R}$ is smooth on $\mathbb{R}\times\mathcal U$.\\
	(iii). If $X_0=\{\mathbf x(s), s\in[0,t]\}$ is an opinion trajectory of agent $i$ then, the feedback Nash equilibrium $\big\{u^{i*}(s,x^i)\in\mathcal U; i\in N\big\}$ would be the solution of the following equation
	\begin{align}\label{11}
	\mbox{$\frac{\partial}{\partial u^i}$}f^i(s,\mathbf x,u^i) \left[\mbox{$\frac{\partial^2}{\partial (x^i)^2}$}f^i(s,\mathbf x,u^i)\right]^2=2\mbox{$\frac{\partial}{\partial x^i}$}f^i(s,\mathbf x,u^i) \mbox{$\frac{\partial^2}{\partial x^i\partial u^i}$}f^i(s,\mathbf x,u^i),
	\end{align}
	where for an It\^o process $h^i(s,x^i)\in \mathcal [0,t]\times \mathcal P_2(\mathbb R)\times\mathbb R$
	\begin{align}\label{12}
	f^i(s,\mathbf x,\gamma,u^i)&=L^i(s,\mathbf x,u^i)+h^i(s,x^i)d\lambda^i(s)+\left[\mbox{$\frac{\partial h^i(s,x^i)}{\partial s}$}d\lambda^i(s)+\mbox{$\frac{d\lambda^i(s)}{d s}$}h^i(s,x^i)\right]\notag\\
	&\hspace{.25cm}+\mbox{$\frac{\partial h^i(s,x^i)}{\partial x^i}$}\mu^i\left[s,x^i,\mathbb P_{(x^i)},u^i\right]d\lambda^i(s)+\mbox{$\frac{1}{2}$}\left[\sigma^{i}\left[s,x^i,\mathbb P_{(x^i)},u^i\right]\right]^2\mbox{$\frac{\partial^2 h^i(s,x^i)}{\partial (x^i)^2}$}d\lambda^i(s).
	\end{align}
\end{prop}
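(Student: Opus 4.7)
My approach is to exploit the Feynman path-integral framework developed in Section~4 and reduce the dynamic optimization over $u^i$ to a local algebraic first-order condition. The starting point is the action functional $\mathcal{A}_{s,s+\varepsilon}(x^i)$ introduced just before \eqref{lin8}, together with the It\^o process $h^i$ that approximates the dynamic constraint inside the stochastic Lagrangian \eqref{5.3}.

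First, I would apply It\^o's lemma to $h^i[\nu+\Delta\nu,\,x^i(\nu)+\Delta x^i(\nu)]$ on the infinitesimal interval $[s,s+\varepsilon]$, retaining terms up to $O(\varepsilon)$. The drift contribution $\partial_{x^i}h^i\,\mu^i\,d\nu$ and the It\^o correction $\tfrac{1}{2}[\sigma^i]^2\,\partial^2_{x^ix^i}h^i\,d\nu$ combine with the explicit time derivative $\partial_s h^i$ and with the Lagrange-multiplier term $h^i\,d\lambda^i$ to give precisely the integrand $f^i(s,\mathbf x,\gamma,u^i)$ displayed in \eqref{12}. Consequently, after this It\^o expansion, the transition function in \eqref{lin8} can be written, up to $o(\varepsilon)$, as
\[
\Psi^i_{s,s+\varepsilon}(x^i)\;=\;\frac{1}{L^i_\varepsilon}\int_{\mathbb R^{n}}\exp\!\bigl[-\varepsilon\,f^i(s,\mathbf x,\gamma,u^i)\bigr]\,\Psi^i_s\bigl(x^i+\Delta x^i\bigr)\,d(\Delta x^i).
\]

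Second, I would Taylor-expand $\Psi^i_s(x^i+\Delta x^i)$ around $x^i$ up to second order in $\Delta x^i$. Near the saddle point, $\exp[-\varepsilon f^i]$ behaves as $\varepsilon\downarrow 0$ like a Gaussian in $\Delta x^i$ with mean proportional to $\partial_{x^i}f^i$ and inverse variance proportional to $\partial^2_{x^ix^i}f^i$. Computing the zeroth, first, and second Gaussian moments and fixing $L^i_\varepsilon$ so that the $k=0$ term reproduces $\Psi^i_s(x^i)$, I would match orders of $\varepsilon$ and pass to the limit $\varepsilon\downarrow 0$. This produces a Schr\"odinger-type backward equation for $\Psi^i_s$ whose coefficients are explicit polynomial combinations of $f^i,\,\partial_{x^i}f^i,\,\partial^2_{x^ix^i}f^i$. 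At a feedback Nash equilibrium $u^{i*}(s,x^i)$ the first-order condition is obtained by differentiating this backward equation with respect to $u^i$ and equating to zero; substituting the Gaussian-moment representations and clearing the common positive factors $\Psi^i_s$ and $\partial^2_{x^ix^i}f^i$ collapses the condition to the algebraic identity \eqref{11}. Part (i) then follows from an implicit function theorem applied to \eqref{11}, using the smoothness claimed in (ii) together with Assumption \ref{as1}(v); the required Jacobian invertibility is supplied by the concavity clause of Proposition \ref{p0.0}, which also certifies that any root of \eqref{11} is in fact a minimizer.

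The main obstacle will be the bookkeeping in the Gaussian expansion step. The integrand $f^i$ carries the stochastic multiplier increment $d\lambda^i(s)$, whose $\mathcal H^{2,\tilde m}$-regularity is supplied by Lemma \ref{lem4}, and it inherits mean-field dependence through the Fr\'echet derivatives $\partial_\gamma\mu^i,\,\partial_\gamma\sigma^i$ acting on $\mathbb P_{(x^i)}$. I will need to verify that neither the $d\lambda^i$ contribution nor the $\tilde\E$ terms coming from the Fr\'echet derivatives generate $u^i$-dependent corrections at order $\sqrt{\varepsilon}$ that would survive the cancellation and distort \eqref{11}. Lemma \ref{lem4} controls the stochastic term, while the identification of $\partial_\gamma\mathcal H(\bar\gamma)$ as a deterministic $\hat\gamma$-almost-everywhere defined map (recalled in Section~3) reduces the functional contributions to expectations that are subleading after integration against the Gaussian. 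Once these two checks are carried out, the reduction to \eqref{11} is purely mechanical.
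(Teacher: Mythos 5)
Your plan follows the same route as the paper's proof: an It\^o expansion of $h^i$ over $[s,s+\varepsilon]$ assembling the integrand $f^i$ of \eqref{12}, a Gaussian (saddle-point) evaluation of the transition kernel \eqref{lin8} with mean and inverse variance governed by $\partial_{x^i}f^i$ and $\partial^2_{x^ix^i}f^i$, passage to a Fokker--Planck/Schr\"odinger-type equation for $\Psi^i_s$, and a first-order condition in $u^i$ that collapses to \eqref{11} (the paper additionally notes $f^i_{xxu}=0$ to effect the final cancellation). This is essentially the paper's argument, so no further comparison is needed.
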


\begin{proof}
	See the Appendix.
\end{proof}	

\begin{remark}
	The central idea of Proposition \ref{p0} is to choose $h^i$ appropriately. Therefore, one natural candidate should be a function of the integrating factor of the stochastic opinion dynamics represented in Equation \eqref{1}.
\end{remark}

To demonstrate the preceding proposition, we present a detailed example to identify an optimal strategy in McKean-Vlasov SDEs and the corresponding systems of interacting particles. Specifically, we examine a stochastic opinion dynamics model involving six unknown parameters. To construct this example we are going to combine the equations from \cite{sharrock2021parameter}, \cite{sharrock2023online}, and \cite{chen2022deep}. Following \cite{sharrock2023online} consider a one-dimensional stochastic opinion dynamics model, parameterized by $\theta=(\theta_1,\theta_2)^T\in\mathbb R^2$ of the form
\begin{equation}\label{6}
dx^i(s)=-\left[\int_{\mathbb R}\phi_\theta\left(||x^i(s)-x^j(s)||\right)\left[x^i(s)-x^j(s)\right]\gamma d(x^j)\right]ds+\sigma^ix^i(s)dB^i(s),
\end{equation}
where $\sigma^i>0$, $B^i=\{B^i(s)\}_{s\geq 0}$ is standard Brownian motion,  and the interaction kernel $\phi_\theta:\mathbb R_+\ra\mathbb R_+$ which has the form
\begin{align}\label{7}
\phi_\theta(\be)&=
\begin{cases}
\theta_1\exp\left\{-\frac{0.01}{1-(\beta-\theta_2)^2}\right\} & \text{if}\ \beta>0\\
0 & \text{if}\ \beta\leq 0.
\end{cases}
\end{align}
This model is often described in terms of the corresponding system of interacting particles, which is represented by
\begin{equation}\label{8}
dx^i(s)=-\frac{1}{n}\sum_{j=1}^n\phi_\theta\left(||x^i(s)-x^j(s)||\right)\left[x^i(s)-x^j(s)\right]ds+\sigma^ix^i(s)dB^i(s),
\end{equation}
where $\theta_1$ is the scale, and $\theta_2$ is the range parameters. Models of this type appear in a variety of fields, ranging from biology to the social sciences, where $\phi_\theta$ determines how the behavior of one particle (such as an agent's opinions) affects the behavior of other particles (such as the opinions of others). For a comprehensive discussion of these models, see \citep{brugna2015kinetic,chazelle2017well,lu2021learning,sharrock2021parameter}. In deterministic versions of these models, it is well established that over time, particles converge into clusters. The number of clusters depends on both the interaction kernel (i.e., the scope and intensity of interactions between particles) and the initial conditions.

Following \cite{chen2022deep} we use a modified version of Friedkin and Johnsen model \citep{friedkin1990social}
\begin{equation}\label{9}
dx^i(s)=-\a(s)x^i(s) ds-\a(s)F^i(\mathbb A_i)ds+G(s,x^i(s))[u^i(s)]^2+\sigma^ix^i(s)dB^i(s),
\end{equation}
where $F^i:[0,1]\times[0,t]\ra[0,1]$, $\mathbb A_i$ defines the set of neighbors of $i^{th}$ agent such that $\mathbb A_i=\left\{x^j(s)\big| ||x^i(s)-x^j(s)||_2^2\leq r\right\}$, for all $r$ be the radius of neighborhood, and $G:[0,t]\times[0,1]\ra[0,1]\times\mathcal U$ be the actuator dynamics \citep{chen2022deep}. After assuming $$F^i(\mathbb A_i)=\frac{1}{n}\sum_{j=1}^n\phi_\theta\left(||x^i(s)-x^j(s)||\right)\left[x^i(s)-x^j(s)\right]$$ and $G(s,x^i(s))=x^i(s),$ Equation \eqref{9} becomes,
\begin{multline}\label{10}
dx^i(s)=-\a(s)x^i(s) ds-\a(s)\frac{1}{n}\sum_{j=1}^n\phi_\theta\left(||x^i(s)-x^j(s)||\right)\left[x^i(s)-x^j(s)\right]ds\\+x^i(s)[u^i(s)]^2+\sigma^ix^i(s)dB^i(s).
\end{multline}
Our main aim is to minimize Equation \eqref{0} subject to Equation \eqref{10}. The integrating factor of Equation \eqref{10} is $\exp\{-\int_0^s\sigma^idB^i(\nu)\\+\frac{1}{2}\int_0^s(\sigma^i)^2d\nu\}$. Therefore, $h^i(s)$ function is
\begin{align}\label{13}
h^i(s)&=x^i(s)\exp\left\{-\int_0^s\sigma^idB^i(\nu)+\frac{1}{2}\int_0^s(\sigma^i)^2d\nu\right\}\notag\\
&=x^i(s)\exp\left\{-\sigma^i B^i(s)+\frac{1}{2}(\sigma^i)^2 s\right\}.
\end{align}
Equation \eqref{12} becomes,
\begin{multline}\label{14}
f^i(s,\mathbf x,\gamma,u^i)\\
=\frac{1}{2}\bigg\{\sum_{j\in\eta_i}w_{ij}\left[x^i(s)-x^j(s)\right]^2+k_i\left[x^i(s)-x_0^i\right]^2+\left[u^i(s)\right]^2\bigg\}+x^i(s)\exp\left\{-\sigma^i B^i(s)+\frac{1}{2}(\sigma^i)^2 s\right\}d\lambda^i(s)\\
+\frac{1}{2}x^i(s)\exp\left\{-\sigma^i B^i(s)+\frac{1}{2}(\sigma^i)^2 s\right\}\left(\sigma^i\right)^2d\lambda^i(s)+\frac{d\lambda^i(s)}{ds}x^i(s)\exp\left\{-\sigma^i B^i(s)+\frac{1}{2}(\sigma^i)^2 s\right\}\\
+\exp\left\{-\sigma^i B^i(s)+\frac{1}{2}(\sigma^i)^2 s\right\}\biggr[-\a(s)x^i(s)-\a\frac{1}{n}\sum_{j=1}^n\phi_\theta\left(||x^i(s)-x^j(s)||\right)\\\times\left[x^i(s)-x^j(s)\right]+x^i(s)(u^i(s))^2\biggr]d\lambda^i(s).
\end{multline}
Now,
\begin{align*}
&\frac{\partial}{\partial x^i}f^i(s,\mathbf x,\gamma,u^i)\\
&\hspace{.5cm}=\sum_{j\in\eta_i}w_{ij}\left[x^i(s)-x^j(s)\right]+k_i\left[x^i(s)-x_0^i\right]+\exp\left\{-\sigma^i B^i(s)+\frac{1}{2}(\sigma^i)^2 s\right\}d\lambda^i(s)\\
&\hspace{1cm}+\exp\left\{-\sigma^i B^i(s)+\frac{1}{2}(\sigma^i)^2 s\right\}\frac{d\lambda^i(s)}{ds}+\exp\left\{-\sigma^i B^i(s)+\frac{1}{2}(\sigma^i)^2 s\right\}\\
&\hspace{1.5cm}\times\biggr\{-\a(s)-\a(s)\frac{1}{n}\sum_{j=1}^n\biggr[\frac{d}{dx^i}\phi_\theta\left(||x^i(s)-x^j(s)||\right)\left[x^i(s)-x^j(s)\right]\\
&\hspace{2cm}+\phi_\theta\left(||x^i(s)-x^j(s)||\right)\biggr]+(u^i(s))^2\biggr\}d\lambda^i(s),\\
&\frac{\partial^2}{\partial (x^i)^2}f^i(s,\mathbf x,\gamma,u^i)=\sum_{j\in\eta_i}w_{ij}+k_i+\exp\left\{-\sigma^i B^i(s)+\frac{1}{2}(\sigma^i)^2 s\right\}\\
&\hspace{.5cm}\times\left\{-\a(s)\frac{1}{n}\sum_{j=1}^n\left[\frac{d^2}{d(x^i)^2}\phi_\theta\left(||x^i(s)-x^j(s)||\right)\left[x^i(s)-x^j(s)\right]+2\frac{d}{dx^i}\phi_\theta\left(||x^i(s)-x^j(s)||\right)\right]\right\}d\lambda^i(s),\\
&\frac{\partial}{\partial u^i}f^i(s,\mathbf x,\gamma,u^i)=u^i(s)\left[1+2x^i(s)\exp\left\{-\sigma^i B^i(s)+\frac{1}{2}(\sigma^i)^2 s\right\} d\lambda^i(s)\right],\\
&\frac{\partial^2}{\partial x^i\partial u^i}f^i(s,\mathbf x,\gamma,u^i)=2\exp\left\{-\sigma^i B^i(s)+\frac{1}{2}(\sigma^i)^2 s\right\}d\lambda^i(s).
\end{align*}
Using the above results Equation \eqref{11} yields,
\begin{multline}\label{15}
4\left[u^i(s)\right]^2\exp\left\{-2\sigma^i B^i(s)+(\sigma^i)^2 s\right\}\left[d\lambda^i(s)\right]^2-u^i(s)\left[1+2x^i(s)\exp\left\{-\sigma^i B^i(s)+\frac{1}{2}(\sigma^i)^2 s\right\} d\lambda^i(s)\right]\\
\times\biggr[\sum_{j\in\eta_i}w_{ij}+k_i+\exp\left\{-\sigma^i B^i(s)+\frac{1}{2}(\sigma^i)^2 s\right\}\left\{-\a(s)\frac{1}{n}\sum_{j=1}^n\left[\frac{d^2}{d(x^i)^2}\phi_\theta\left(||x^i(s)-x^j(s)||\right)\left[x^i(s)-x^j(s)\right]\right.\right.\\
\left.\left.+2\frac{d}{dx^i}\phi_\theta\left(||x^i(s)-x^j(s)||\right)\right]\right\}d\lambda^i(s)\biggr]\\
+4\exp\left\{-\sigma^i B^i(s)+\frac{1}{2}(\sigma^i)^2 s\right\}d\lambda^i(s)\biggr[\sum_{j\in\eta_i}w_{ij}\left[x^i(s)-x^j(s)\right]+k_i\left[x^i(s)-x_0^i\right]\\
+\exp\left\{-\sigma^i B^i(s)+\frac{1}{2}(\sigma^i)^2 s\right\}d\lambda^i(s)+\exp\left\{-\sigma^i B^i(s)+\frac{1}{2}(\sigma^i)^2 s\right\}\frac{d\lambda^i(s)}{ds}+\exp\left\{-\sigma^i B^i(s)+\frac{1}{2}(\sigma^i)^2 s\right\}\\
\times\left\{-\a(s)-\a(s)\frac{1}{n}\sum_{j=1}^n\left[\frac{d}{dx^i}\phi_\theta\left(||x^i(s)-x^j(s)||\right)\left[x^i(s)-x^j(s)\right]+\phi_\theta\left(||x^i(s)-x^j(s)||\right)\right]\right\}d\lambda^i(s)\biggr]=0.
\end{multline}
Clearly, Equation \eqref{15} is a quadratic equation with respect to strategy $u^i(s)$ and can be written as $T_1\left[u^i(s)\right]^2+T_2u^i(s)+T_3=0$. Therefore, optimal strategy of agent $i$ is 
\begin{equation}\label{16}
u^{i*}(s)=\frac{-T_2\pm\sqrt{\left(T_2\right)^2-4T_1T_3}}{2T_1},
\end{equation}
where
\begin{align*}
T_1&=4\exp\left\{-2\sigma^i B^i(s)+(\sigma^i)^2 s\right\}\left[d\lambda^i(s)\right]^2> 0,\\
T_2&=-\left[1+2x^i(s)\exp\left\{-\sigma^i B^i(s)+\frac{1}{2}(\sigma^i)^2 s\right\} d\lambda^i(s)\right]\biggr[\sum_{j\in\eta_i}w_{ij}+k_i+\exp\left\{-\sigma^i B^i(s)+\frac{1}{2}(\sigma^i)^2 s\right\}\\
&\hspace{.5cm}\times\left\{-\a(s)\frac{1}{n}\sum_{j=1}^n\left[\frac{d^2}{d(x^i)^2}\phi_\theta\left(||x^i(s)-x^j(s)||\right)\left[x^i(s)-x^j(s)\right]+2\frac{d}{dx^i}\phi_\theta\left(||x^i(s)-x^j(s)||\right)\right]\right\}d\lambda^i(s)\biggr],\\
T_3&=4\exp\left\{-\sigma^i B^i(s)+\frac{1}{2}(\sigma^i)^2 s\right\}d\lambda^i(s)\biggr[\sum_{j\in\eta_i}w_{ij}\left[x^i(s)-x^j(s)\right]+k_i\left[x^i(s)-x_0^i\right]\\
&+\exp\left\{-\sigma^i B^i(s)+\frac{1}{2}(\sigma^i)^2 s\right\}d\lambda^i(s)+\exp\left\{-\sigma^i B^i(s)+\frac{1}{2}(\sigma^i)^2 s\right\}\frac{d\lambda^i(s)}{ds}+\exp\left\{-\sigma^i B^i(s)+\frac{1}{2}(\sigma^i)^2 s\right\}\\
&\times\left\{-\a(s)-\a(s)\frac{1}{n}\sum_{j=1}^n\left[\frac{d}{dx^i}\phi_\theta\left(||x^i(s)-x^j(s)||\right)\left[x^i(s)-x^j(s)\right]+\phi_\theta\left(||x^i(s)-x^j(s)||\right)\right]\right\}d\lambda^i(s)\biggr],
\end{align*}
and 
\begin{align*}
&\frac{d}{dx^i}\phi_\theta\left(||x^i(s)-x^j(s)||\right)\\
&=-0.02\theta_1\exp\left\{-0.01\left[1-\left[x^i(s)-x^j(s)-\theta_2\right]^2\right]^{-1}\right\}\\
&\hspace{2cm}\times\left[1-\left[x^i(s)-x^j(s)-\theta_2\right]^2\right]^{-2}\left[x^i(s)-x^j(s)-\theta_2\right],\\
&\frac{d^2}{d(x^i)^2}\phi_\theta\left(||x^i(s)-x^j(s)||\right)\\
&=-0.02\theta_1\biggr[-0.02\exp\left\{-0.01\left[1-\left[x^i(s)-x^j(s)-\theta_2\right]^2\right]^{-1}\right\}\biggr]\\
&\hspace{1cm}+\exp\left\{-0.01\left[1-\left[x^i(s)-x^j(s)-\theta_2\right]^2\right]^{-1}\right\}\\
&\hspace{1.5cm}\times\left[4\left[1-\left[x^i(s)-x^j(s)-\theta_2\right]^2\right]^{-3}\left[x^i(s)-x^j(s)-\theta_2\right]^2+\left[1-\left[x^i(s)-x^j(s)-\theta_2\right]^2\right]^{-2}\right].
\end{align*}
Let us discuss how the optimal strategy of agent $i$ derived in Equation \eqref{16} varies with the difference in the opinions between $i^{th}$ and $j^{th}$ agents.

{\bf Case I.}\\
Suppose there is no difference in opinions between agents $i$ and $j$. In other words, $x^i(s)-x^j(s)=0$, which implies $\phi_\theta\left(||x^i(s)-x^j(s)||\right)=0$, and 
\begin{align*}
T_1&=4\exp\left\{-2\sigma^i B^i(s)+(\sigma^i)^2 s\right\}\left[d\lambda^i(s)\right]^2\neq 0,\\
T_2&=-\left(\sum_{j\in\eta_i}w_{ij}+k_i\right)\left[1+2x^i(s)\exp\left\{-\sigma^i B^i(s)+\frac{1}{2}(\sigma^i)^2 s\right\} d\lambda^i(s)\right],\\
T_3&=4\exp\left\{-2\sigma^i B^i(s)+(\sigma^i)^2 s\right\}d\lambda^i(s)\biggr[k_i\left[x^i(s)-x^i_0\right]\exp\left\{\sigma^i B^i(s)-\frac{1}{2}(\sigma^i)^2 s\right\}\\
&\hspace{2cm}+d\lambda^i(s)+\frac{d\lambda^i(s)}{ds}-\a(s)d\lambda^i\biggr].
\end{align*}

Therefore,
\begin{align}\label{17}
&\frac{\partial}{\partial x^i}u^{i*}(s)\\
&=2(T_1)^{-1}\left(\sum_{j\in\eta_i}w_{ij}+k_i\right)\exp\left\{-\sigma^i B^i(s)+\frac{1}{2}(\sigma^i)^2 s\right\}d\lambda^i(s)\notag\\
&\pm 2\biggr\{\left(\sum_{j\in\eta_i}w_{ij}+k_i\right)^2\biggr[1+2x^i(s)\exp\left\{-\sigma^i B^i(s)+\frac{1}{2}(\sigma^i)^2 s\right\} d\lambda^i(s)\biggr]^2\notag\\
&-64\exp\left\{-4\sigma^iB^i(s)+2(\sigma^i)^2 s\right\}(d\lambda^i(s))^3\notag\\
&\times\left[k_i\left[x^i(s)-x^i_0\right]\exp\left\{\sigma^i B^i(s)-\frac{1}{2}(\sigma^i)^2 s\right\}+d\lambda^i(s)+\frac{d\lambda^i(s)}{ds}-\a(s)d\lambda^i(s)\right] \biggr\}^{-3/2}\notag\\
&\times\exp\left\{-\sigma^i B^i(s)+\frac{1}{2}(\sigma^i)^2 s\right\}\notag\\
&\times \left[1+2x^i(s)\exp\left\{-\sigma^i B^i(s)+\frac{1}{2}(\sigma^i)^2 s\right\} d\lambda^i(s)\right]-64\exp\left\{-3\sigma^iB^i(s)+\frac{3}{2}(\sigma^i)^2 s\right\}(d\lambda^i(s))^3.
\end{align}
Since $T_1>0$, the sign of the above partial derivative depends on the terms on two sides of $\pm$. Furthermore, as the optimal strategy cannot be negative and the term after $\pm$ in Equation \eqref{16} is a negative dominant term, we ignore the $+$ sign. Moreover, assuming $w_{ij}=k_i=0$, Equation \eqref{17} yields,
\begin{align}\label{18}
\frac{\partial}{\partial x^i}u^{i*}(s)&=-2\left\{-64\exp\left\{-4\sigma^iB^i(s)+2(\sigma^i)^2 s\right\}(d\lambda^i(s))^3\left[d\lambda^i(s)+\frac{d\lambda^i(s)}{ds}-\a(s)d\lambda^i(s)\right]\right\}^{-3/2}\notag\\
&\hspace{1cm}\times\exp\left\{-\sigma^i B^i(s)+\frac{1}{2}(\sigma^i)^2 s\right\}\left[1+2x^i(s)\exp\left\{-\sigma^i B^i(s)+\frac{1}{2}(\sigma^i)^2 s\right\} d\lambda^i(s)\right]\notag\\
&\hspace{2cm}-64\exp\left\{-3\sigma^iB^i(s)+\frac{3}{2}(\sigma^i)^2 s\right\}(d\lambda^i(s))^3>0.
\end{align}
Above result is true for all positive values of $w_{ij}$ and $k_i$. This implies that an agent's opinion positively influence their optimal strategy.

\medskip

{\bf Case II.}\\
Consider the opinion of agent $i$ is less influential than agent $j$ or, $\left[x^i(s)-x^j(s)\right]<0$. By construction $\phi_\theta\left(||x^i(s)-x^j(s)||\right)=0$. The terms $T_1$ and $T_2$ take the same value as in Case I. The other term is
\begin{align*}
T_3&=4\exp\left\{-2\sigma^i B^i(s)+(\sigma^i)^2 s\right\}d\lambda^i(s)\left[\left[\sum_{j\in\eta_i}w_{ij}\left[x^i(s)-x^j(s)\right]+k_i\left[x^i(s)-x^i_0\right]\right]\right.\\
&\hspace{1cm}\left.\times\exp\left\{\sigma^i B^i(s)-\frac{1}{2}(\sigma^i)^2 s\right\}+d\lambda^i(s)+\frac{d\lambda^i(s)}{ds}-\a(s)d\lambda^i\right].
\end{align*} 
Hence,
\begin{align}\label{19}
&\frac{\partial}{\partial x^i}u^{i*}(s)\notag\\
&=2(T_1)^{-1}\left(\sum_{j\in\eta_i}w_{ij}+k_i\right)\exp\left\{-\sigma^i B^i(s)+\frac{1}{2}(\sigma^i)^2 s\right\}d\lambda^i(s)\notag\\
&\pm 2\biggr\{\left(\sum_{j\in\eta_i}w_{ij}+k_i\right)^2\left[1+2x^i(s)\exp\left\{-\sigma^i B^i(s)+\frac{1}{2}(\sigma^i)^2 s\right\} d\lambda^i(s)\right]^2\notag\\
&-64\exp\left\{-4\sigma^iB^i(s)+2(\sigma^i)^2 s\right\}(d\lambda^i(s))^3\notag\\
&\times\biggr[\left[\sum_{j\in\eta_i}w_{ij}\left[x^i(s)-x^j(s)\right]+k_i\left[x^i(s)-x^i_0\right]\right]\exp\left\{\sigma^i B^i(s)-\frac{1}{2}(\sigma^i)^2 s\right\}\notag\\
&+d\lambda^i(s)+\frac{d\lambda^i(s)}{ds}-\a(s)d\lambda^i(s)\biggr] \biggr\}^{-3/2}\notag\\
&\times\exp\left\{-\sigma^i B^i(s)+\frac{1}{2}(\sigma^i)^2 s\right\} \left[1+2x^i(s)\exp\left\{-\sigma^i B^i(s)+\frac{1}{2}(\sigma^i)^2 s\right\} d\lambda^i(s)\right]\notag\\
&-64\exp\left\{-3\sigma^iB^i(s)+\frac{3}{2}(\sigma^i)^2 s\right\}(d\lambda^i(s))^3.
\end{align}
After assuming $w_{ij}=k_i=0$, the Equation \eqref{19} becomes Equation \eqref{18}. Therefore, $\frac{\partial}{\partial x^i}u^{i*}(s)>0$. This implies agent $i$'s opinion positively influence $u^{i*}(s)$ even agent $j$'s opinion is more influential in the society. Furthermore,
\begin{multline*}
\frac{\partial}{\partial x^j}u^{i*}(s)=-4\left\{\sum_{j\in\eta_i}w_{ij}\left[x^j(s)-x^i(s)\right]^{-3/2}\right\}\left(\sum_{j\in\eta_i}w_{ij}\right)\\
\times\exp\left\{-\frac{3}{2}\sigma^iB^i(s)+\frac{3}{4}(\sigma^i)^2s\right\}(d\lambda^i(s))^{3/2}<0.
\end{multline*}
The above equation shows a negative correlation between the $i^{th}$ agent's optimal strategy and the $j^{th}$ agent's opinion. This implies that as the opinion of the more influential $j^{th}$ agent becomes stronger, the $i^{th}$ agent becomes more hesitant to make a decision.

\section*{Conclusion}

This paper has addressed the estimation of an optimal opinion control strategy for a representative agent embedded in a stochastic McKean-Vlasov SDE, which models the dynamics of opinion evolution in a socially interacting population. By leveraging a Feynman-type path integral method with an integrating factor, we obtained a characterization of the agent's feedback control \( u^{i*}(s) \) that minimizes a social cost functional governed by McKean-Vlasov stochastic differential equations. Utilizing a modified Friedkin-Johnsen model, which captures both individual memory and peer influence in opinion formation, we were able to derive a closed-form expression for the agent’s optimal control. The mathematical derivation permitted an explicit analysis of the influence of the agent’s own opinion \( x^i(s) \) and that of their neighbors \( x^j(s) \) under different alignment conditions, specifically when \( x^i(s) = x^j(s) \) and when \( x^i(s) < x^j(s) \). Although the structure of the resulting expressions rendered it difficult to establish a universal directional dependence of \( u^{i*}(s) \) on the relative magnitude of these opinions, our findings demonstrate that the optimal control is positively correlated with the agent’s own opinion across scenarios, regardless of the influence exerted by others in the network.

The results contribute to the broader literature on stochastic control and mean-field interactions by providing both an analytical approach and theoretical insight into decentralized decision-making in socially interactive systems. However, several important avenues remain open for further exploration. A particularly promising direction involves relaxing the assumption that the diffusion coefficient is known. Since the existence and uniqueness of invariant measures in McKean-Vlasov SDEs can be highly sensitive to the noise amplitude, exploring the interplay between noise magnitude and long-run opinion distributions could yield deeper understanding of consensus and polarization phenomena \citep{yusuf2025predictive,yusuf2025prognostic}. Moreover, incorporating network heterogeneity and conducting numerical experiments under varying graph structures, such as scale-free, small-world, or modular networks would provide practical insights into how topology influences opinion optimization. Another compelling extension involves the development of fractional McKean-Vlasov models to capture memory effects and non-Markovian dynamics more accurately. In such a setting, the temporal dependencies in the evolution of opinions may reveal more nuanced relationships between an agent’s strategy and their own historical trajectory as well as those of their peers. Taken together, these extensions hold the potential to significantly enhance the descriptive power of stochastic opinion models and their applicability to real-world social systems.

\section*{Data Availability.}
The author declares that no data have been used in this paper.

\section*{Funding.}
This research received no external funding.

\section*{Appendix.}

\medskip

\subsection*{Proof of Lemma \ref{lem1}.}
From the definition of Wasserstein distance distance explained above consider $\pi\in\mathcal P(G\times G)$ such that $\gamma\in\mathcal P_2(G)$ and $\gamma'\in\mathcal P_2(G)$ are the time marginals. Let $\pi\in\mathcal P_2(G\times G)$ be temporarily fixed. Lebesgue dominated convergence theorem yields
\[
W_2(\gamma,\gamma')^2\leq\int|x^i(s_1,\omega)-x^i(s_2,\omega)|^2\pi(d\omega),
\]
for all $s_1,s_2\in[0,t]$ and $\omega\in G$. This implies $[0,t]\ni s_2\hookrightarrow\gamma'\in\mathcal P_2(G)$ is continuous for Wasserstein distance $W_2$. Since $x_0^\in G$ is given for agent $i$, after replacing $\mathbb P_{(x^i)}$ by $\gamma'$ Equation (\ref{1}) becomes,
\begin{align}\label{2}
dx^i(s)&=\mu^i[s,x^i(s),\gamma',u^i(s)]ds+\sigma^i[s,x^i(s),\gamma',u^i(s)]dB^i(s),
\end{align}
with random coefficients $\mu^i$ and $\sigma^i$. Then Theorem 1.2 of \cite{carmona2016} implies the above equation has a unique strong solution denoted by $\bm x_\pi^i=\{x_\pi^i(s)\}_{s=0}^t$. It is important to note that the probability law corresponding to $\bm x_\pi^i$ is of order 2. Define a mapping $\Xi$ so that
\[
\mathcal P_2(G)\ni\pi\hookrightarrow \Xi(\pi)=\mathbb P_{(\bm x_\pi^i)}\in\mathcal P_2(G).
\]
A process $\bm x^i=\{\bm x^i(s)\}_{s=0}^t$ with $\E\sup_{s\in[0,t]}|x^i(s)|^2<\infty$ is a solution of a stochastic differential equation iff the probability law is a fixed point on $Xi$. In the rest of the proof it will be shown that indeed, the mapping $\Xi$ has a unique fixed point which is sufficient for existence and uniqueness  of the solution to the Equation (\ref{2}). Consider $\pi,\pi'\in\mathcal P_2(G)$. As $\bm x_\pi^i$ and $\bm x_{\pi'}^i$ have same initial condition $x_0^i$, Doob's maximal inequality with Assumption \ref{as1} imply
\begin{multline*}
\E\left\{\sup_{s_1\in[0,s_2]}\left|x_\pi^i(s_1)-x_{\pi'}^i(s_1)\right|^2\right\}\\
\leq 2\E\left\{\sup_{s_1\in[0,s_2]}\left|\int_0^{s_1}[\mu^i(\nu,x_\pi^i(\nu),\pi_\nu,u_\pi^i(\nu))-\mu^i(\nu,x_{\pi'}^{i}(\nu),{\pi'}_\nu,u_{\pi'}^i(\nu))]d\nu\right|^2\right\}\\
+2\E\left\{\sup_{s_1\in[0,s_2]}\left|\int_0^{s_1}[\sigma^i(\nu,x_\pi^i(\nu),\pi_\nu,u_\pi^i(\nu))-\sigma^i(\nu,x_{\pi'}^{i}(\nu),{\pi'}_\nu,u_{\pi'}^i(\nu))]dB^i(\nu)\right|^2\right\}\\
\leq \hat c(1+t)\left[\int_0^{s_2}\E\left\{\sup_{\nu\in[0,s_1]}|x_\pi^i(\nu)-x_{\pi'}^i(\nu)|^2ds_1\right\}+\int_0^{s_2}\E\left\{\sup_{\nu\in[0,s_1]}|u_\pi^i(\nu)-u_{\pi'}^i(\nu)|^2ds_1\right\}\right.\\
\left.+\int_0^{s_2}W_2(\pi_{s_1},\pi'_{s_1})ds_1+\E\left\{\int_0^{s_2}|\sigma^i(\nu,x_\pi^i(\nu),\pi_\nu,u_\pi^i(\nu))-\sigma^i(\nu,x_{\pi'}^{i}(\nu),{\pi'}_\nu,u_{\pi'}^i(\nu))|^2d\nu\right\}\right]\\
\leq\hat ct\left[\int_0^{s_2}\E\left\{\sup_{\nu\in[0,s_1]}|x_\pi^i(\nu)-x_{\pi'}^i(\nu)|^2ds_1\right\}+\int_0^{s_2}\E\left\{\sup_{\nu\in[0,s_1]}|u_\pi^i(\nu)-u_{\pi'}^i(\nu)|^2ds_1\right\}\right.\\\left.+\int_0^{s_2}W_2(\pi_{s_1},\pi'_{s_1})ds_1\right].
\end{multline*}
Gronwall-Bellman inequality implies
\begin{equation}\label{4}
\E\left\{\sup_{s_1\in[0,s_2]}|x_\pi^i(s_1)-x_{\pi'}^i(s_1)|^2\right\}\leq\hat ct\exp(\hat ct)\int_0^{s_2}W_2(\pi_{s_1},\pi'_{s_1})ds_1.
\end{equation}
Since
\[
W_2\left(\Xi(\pi),\Xi(\pi')\right)^2\leq \E\left\{\sup_{s_1\in[0,s_2]}|x_\pi^i(s_1)-x_{\pi'}^i(s_1)|^2\right\},
\]
and
\[
W_2(\pi_{s_1},\pi'_{s_1})\leq W_2(s_1)(\pi,\pi'),
\]
Equation (\ref{4}) yields
\[
W_2(s_2)\left(\Xi(\pi),\Xi(\pi')\right)^2\leq \hat ct\exp(\hat ct)\int_0^{s_2}W_2(s_1)(\pi_{s_1},\pi'_{s_1})ds_1.
\]
After iterating the above inequality and denoting by $\Xi^\rho$ the $\rho^{th}$ composition of mapping $\Xi$ with itself yields
\begin{equation*}
W_2(t)\left(\Xi_\rho(\pi),\Xi_\rho(\pi')\right)^2\leq [\hat ct\exp(\hat ct)]^\rho\int_0^t\frac{(t-s_1)^{\rho-1}}{(\rho-1)!}W_2(s_1)(\pi_{s_1},\pi'_{s_1})ds_1\leq\frac{(\hat ct)^\rho}{\rho!}W_2(t)\left(\pi,\pi'\right)^2.
\end{equation*}
For a large value of $\rho$ the mapping $\Xi^\rho$ shows strict contraction. Hence, $\Xi$ admits a unique fixed point. This completes the proof. $\square$

\medskip

\subsection*{Proof of Lemma \ref{lem2}.}

\medskip

For agent $i$ assume $\mathcal V^{i\epsilon}(s)=(1/\epsilon)[x^{i\epsilon}(s)-x^i(s)]-\mathcal V^i(s)$. Therefore, $\mathcal V^{i\epsilon}(0)=0$, and 
\[
x^{i\epsilon}(s)=x^i(s)+\epsilon\left[\mathcal V^i(s)+\mathcal V^{i\epsilon}(s)\right].
\]
Hence,
\begin{multline}\label{m0}
d \mathcal V^{i\epsilon}(s)=\left\{\frac{1}{\epsilon}\left[\mu^i\left(s,x^{i\epsilon}(s),\mathbb P_{(x^{i\epsilon})},u^{i\epsilon}(s)\right)-\mu^i\left(s,x^i(s),\mathbb P_{(x^i)},u^i(s)\right)\right]\right.\\
\left.-\frac{\partial}{\partial x^i}\mu^i\left(s,x^i(s),\mathbb P_{(x^i)},u^i(s)\right)\mathcal V^i(s)-\zeta\left(s,\mathbb P_{(x^i,\mathcal V^i)}\right)-\frac{\partial}{\partial u^i}\mu^i\left(s,x^i(s),\mathbb P_{(x^i)},u^i(s)\right)\right\}ds\\
+\left\{\frac{1}{\epsilon}\left[\sigma^i\left(s,x^{i\epsilon}(s),\mathbb P_{x^{i\epsilon}},u^{i\epsilon}(s)\right)-\sigma^i\left(s,x^i(s),\mathbb P_{x^i},u^i(s)\right)\right]\right.\\
\left. -\frac{\partial}{\partial x^i}\sigma^i\left(s,x^i(s),\mathbb P_{(x^i)},u^i(s)\right)\mathcal V^i(s)-\zeta\left(s,\mathbb P_{(x^i,\mathcal V^i)}\right)-\frac{\partial}{\partial u^i}\sigma^i\left(s,x^i(s),\mathbb P_{(x^i)},u^i(s)\right)\right\}dB^i(s), 
\end{multline}
where
\[
\zeta(.)=\tilde\E\left\{\frac{\partial}{\partial \gamma}\mu^i(s,x^i(s),\mathbb P_{(x^i)},u^i(s))(\hat x^i(s)).\hat {\mathcal V}^i(s)\bigg |_{x^i=x^i(s),u^i=u^i(s)}\right\},
\]
and
\[
\hat\zeta(.)=\tilde\E\left\{\frac{\partial}{\partial \gamma}\sigma^i(s,x^i(s),\mathbb P_{(x^i)},u^i(s))(\hat x^i(s)).\hat {\mathcal V}^i(s)\bigg |_{x^i=x^i(s),u^i=u^i(s)}\right\}.
\]
After putting the values of $\zeta(.)$ and $\hat\zeta(.)$, Equation (\ref{m0}) yields
\begin{multline}\label{m1}
d \mathcal V^{i\epsilon}(s)=\left\{\frac{1}{\epsilon}\left[\mu^i\left(s,x^{i\epsilon}(s),\mathbb P_{(x^{i\epsilon})},u^{i\epsilon}(s)\right)-\mu^i\left(s,x^i(s),\mathbb P_{(x^i)},u^i(s)\right)\right]-\frac{\partial}{\partial x^i}\mu^i\left(s,x^i(s),\mathbb P_{(x^i)},u^i(s)\right)\mathcal V^i(s)\right.\\
\left.-\tilde\E\left\{\frac{\partial}{\partial \gamma}\mu^i(s,x^i(s),\mathbb P_{(x^i)},u^i(s))(\hat x^i(s)).\hat {\mathcal V}^i(s)\bigg |_{x^i=x^i(s),u^i=u^i(s)}\right\}-\frac{\partial}{\partial u^i}\mu^i\left(s,x^i(s),\mathbb P_{(x^i)},u^i(s)\right)\right\}ds\\
+\left\{\frac{1}{\epsilon}\left[\sigma^i\left(s,x^{i\epsilon}(s),\mathbb P_{(x^{i\epsilon})},u^{i\epsilon}(s)\right)-\sigma^i\left(s,x^i(s),\mathbb P_{(x^i)},u^i(s)\right)\right]-\frac{\partial}{\partial x^i}\sigma^i\left(s,x^i(s),\mathbb P_{(x^i)},u^i(s)\right)\mathcal V^i(s)\right.\\
\left.-\tilde\E\left\{\frac{\partial}{\partial \gamma}\sigma^i(s,x^i(s),\mathbb P_{(x^i)},u^i(s))(\hat x^i(s)).\hat {\mathcal V}^i(s)\bigg |_{x^i=x^i(s),u^i=u^i(s)}\right\}-\frac{\partial}{\partial u^i}\sigma^i\left(s,x^i(s),\mathbb P_{(x^i)},u^i(s)\right)\right\}dB^i(s)\\
=Ads+BdB^i(s).
\end{multline}

For each continuous time point $s\in[0,t]$ there exists a $\epsilon>0$ small enough such that 
\begin{multline}\label{m2}
\frac{1}{\epsilon}\left[\mu^i\left(s,x^{i\epsilon}(s),\mathbb P_{(x^{i\epsilon})},u^{i\epsilon}(s)\right)-\mu^i\left(s,x^i(s),\mathbb P_{(x^i)},u^i(s)\right)\right]\\
=\frac{1}{\epsilon}\left[\mu^i\left(s,x^{i}(s)+\epsilon\left(\mathcal V^i(s)+\mathcal V^{i\epsilon}(s)\right),\mathbb P_{\left(x^{i}(s)+\epsilon\left(\mathcal V^i(s)+\mathcal V^{i\epsilon}(s)\right)\right)},u^{i}(s)+\epsilon v^i(s)\right)-\mu^i\left(s,x^i(s),\mathbb P_{(x^i)},u^i(s)\right)\right]\\
=\int_0^1\frac{\partial}{\partial x^i}\mu^i\left(s,x^{i}(s)+\epsilon\be\left(\mathcal V^i(s)+\mathcal V^{i\epsilon}(s)\right),\mathbb P_{\left(x^{i}(s)+\epsilon\be\left(\mathcal V^i(s)+\mathcal V^{i\epsilon}(s)\right)\right)},u^{i}(s)+\epsilon\be v^i(s)\right)\left(\mathcal V^i(s)+\mathcal V^{i\epsilon}(s)\right)d\be\\
+\int_0^1\tilde\E\biggr\{\frac{\partial}{\partial \gamma}\mu^i\left(s,x^{i}(s)+\epsilon\be\left(\mathcal V^i(s)+\mathcal V^{i\epsilon}(s)\right),\mathbb P_{\left(x^{i}(s)+\epsilon\be\left(\mathcal V^i(s)+\mathcal V^{i\epsilon}(s)\right)\right)},u^{i}(s)+\epsilon\be v^i(s)\right)\\
\times\left[\hat x^{i}(s)+\epsilon\be\left(\hat{\mathcal V}^i(s)+\hat{\mathcal V}^{i\epsilon}(s)\right)\right]\left[\hat{\mathcal V}^i(s)+\hat{\mathcal V}^{i\epsilon}(s)\right]\biggr\}d\be\\
+\int_0^1\frac{\partial}{\partial u^i}\mu^i\left(s,x^{i}(s)+\epsilon\be\left(\mathcal V^i(s)+\mathcal V^{i\epsilon}(s)\right),\mathbb P_{\left(x^{i}(s)+\epsilon\be\left(\mathcal V^i(s)+\mathcal V^{i\epsilon}(s)\right)\right)},u^{i}(s)+\epsilon\be v^i(s)\right)\mathcal V^i(s)d\be.
\end{multline}
To get rid of notational complicacy define $x_\be^{i\epsilon}(s):=x^{i}(s)+\epsilon\be\left(\mathcal V^i(s)+\mathcal V^{i\epsilon}(s)\right)$, $\hat x_\be^{i\epsilon}(s):=\hat x^{i}(s)+\epsilon\be\left(\hat{\mathcal V}^i(s)+\hat{\mathcal V}^{i\epsilon}(s)\right)$, and $u_\be^{i\epsilon}(s):=u^{i}(s)+\epsilon\be v^i(s)$. Calculating ``ds" term of Equation (\ref{m1}) implies
\begin{multline*}
A=\int_0^1\frac{\partial}{\partial x^i}\mu^i\left(s,x_\be^{i\epsilon}(s),\mathbb P_{\left(x_\be^{i\epsilon}(s)\right)},u_\be^{i\epsilon}(s)\right)\mathcal V^{i\epsilon}(s)d\be\\
+\int_0^1\tilde\E\left\{\frac{\partial}{\partial \gamma}\mu^i\left(s,x_\be^{i\epsilon}(s),\mathbb P_{\left(x_\be^{i\epsilon}(s)\right)},u_\be^{i\epsilon}(s)\right)\right\}\left(\hat x_\be^{i\epsilon}(s)\hat{\mathcal V}^{i\epsilon}(s)\right)d\be\\
+\int_0^1\left[\frac{\partial}{\partial x^i}\mu^i\left(s,x_\be^{i\epsilon}(s),\mathbb P_{\left(x_\be^{i\epsilon}(s)\right)},u_\be^{i\epsilon}(s)\right)-\frac{\partial}{\partial x^i}\mu^i\left(s,x^i(s),\mathbb P_{(x^i)},u^i(s)\right)\right]\mathcal V^i(s) d\be\\
+\int_0^1\left[\tilde\E\left\{\frac{\partial}{\partial \gamma}\mu^i\left(s,x_\be^{i\epsilon}(s),\mathbb P_{\left(x_\be^{i\epsilon}(s)\right)},u_\be^{i\epsilon}(s)\right)-\frac{\partial}{\partial x^i}\mu^i\left(s,x^i(s),\mathbb P_{(x^i)},u^i(s)\right)\right\}\right]\left(\hat x_\be^{i\epsilon}(s)\hat{\mathcal V}^{i\epsilon}(s)\right)d\be\\
+\int_0^1\left[\frac{\partial}{\partial \gamma}\mu^i\left(s,x_\be^{i\epsilon}(s),\mathbb P_{\left(x_\be^{i\epsilon}(s)\right)},u_\be^{i\epsilon}(s)\right)-\frac{\partial}{\partial \gamma}\mu^i\left(s,x^i(s),\mathbb P_{(x^i)},u^i(s)\right)\right]\mathcal V^i(s)d\be\\
=\int_0^1\frac{\partial}{\partial x^i}\mu^i\left(s,x_\be^{i\epsilon}(s),\mathbb P_{\left(x_\be^{i\epsilon}(s)\right)},u_\be^{i\epsilon}(s)\right)\mathcal V^{i\epsilon}(s)d\be\\
+\int_0^1\tilde\E\left\{\frac{\partial}{\partial \gamma}\mu^i\left(s,x_\be^{i\epsilon}(s),\mathbb P_{\left(x_\be^{i\epsilon}(s)\right)},u_\be^{i\epsilon}(s)\right)\right\}\left(\hat x_\be^{i\epsilon}(s)\hat{\mathcal V}^{i\epsilon}(s)\right)d\be+I_1+I_2+I_3.
\end{multline*}
For $\epsilon\ra 0$, the integral terms $I_1$, $I_2$ and $I_3$ converges to zero in $\mathcal H^{2,\tilde m}([0,t]\times G)$. Moreover, for a finite constant $c>0$
\begin{multline*}
\E\left\{\int_0^t|I_1|^2ds\right\}=\E\left\{\int_0^t\left|\int_0^1\left[\frac{\partial}{\partial x^i}\mu^i\left(s,x_\be^{i\epsilon}(s),\mathbb P_{\left(x_\be^{i\epsilon}(s)\right)},u_\be^{i\epsilon}(s)\right)\right.\right.\right.\\
\left.\left.\left.-\frac{\partial}{\partial x^i}\mu^i\left(s,x^i(s),\mathbb P_{(x^i)},u^i(s)\right)\right]\mathcal V^i(s) d\be\right|^2ds\right\}\\
\leq \E\left\{\int_0^t\left|\int_0^1\left[\frac{\partial}{\partial x^i}\mu^i\left(s,x_\be^{i\epsilon}(s),\mathbb P_{\left(x_\be^{i\epsilon}(s)\right)},u_\be^{i\epsilon}(s)\right)-\frac{\partial}{\partial x^i}\mu^i\left(s,x^i(s),\mathbb P_{(x^i)},u^i(s)\right)\right]\right|^2\left|\mathcal V^i(s) \right|^2d\be ds\right\}\\
\leq c\E\left\{\int_0^t\int_0^1(\epsilon\be)^2\left[\left|\mathcal V^{i\epsilon}(s)-\mathcal V^i(s)\right|^2+|v^i(s)|^2\right]\left|\mathcal V^i(s)\right|^2d\be ds\right\}\\
\leq c\left[\int_0^t\int_0^1\E\left\{\left|\epsilon\be\left[\mathcal V^{i\epsilon}(s)-\mathcal V^i(s)\right]\right|^4d\be ds\right\}\right]^{1/2}+\left[\E\left\{|\mathcal V^i(s)|^4ds\right\}\right]^{1/2}\\
+c\left[\int_0^t\int_0^1\E\left\{|\epsilon\be v^i(s)|d\be ds\right\}\right]^{1/2}\left[\E\left\{|\mathcal V^i(s)|^4ds\right\}\right]^{1/2},
\end{multline*}
which converges to $0$ as $\epsilon\ra 0$ for all the above finite expectations. Similar argument goes to $I_2$ and $I_3$. To control the quadratic variation of the ``B" term in Equation (\ref{m1}) \emph{Burkholder-Davis-Gundy} can be used instead of \emph{Jensen's} inequality. Then 
\begin{align*}
\E\left\{\sup_{s\in[0,t]}|\mathcal V^{i\epsilon}(s)|^2ds\right\}&\leq c\left[\int_0^t\E\left\{\sup_{s_1\in[0,s]}|\mathcal V^{i\epsilon}(s_1)|^2\right\}ds+\int_0^t\sup_{s_1\in[0,s]}\left|\E\left\{\mathcal V^{i\epsilon}(s_1)\right\}\right|^2ds\right]+b_\epsilon\\
&\leq c\int_0^t\E\left\{\sup_{s_1\in[0,s]}|\mathcal V^{i\epsilon}(s_1)|^2\right\}ds+b_\epsilon,
\end{align*}
where $\lim_{\epsilon\ra 0} b_\epsilon =0$. The desired result would be obtained by implementing \emph{Gronwall's} inequality. This completes the proof. $\square$

\medskip

\subsection*{Proof of Lemma \ref{lem3}}

\medskip

For agent $i$ and $\nu\in[s,s+\epsilon]$ define $\mathcal V^{i\delta}(\nu):=(1/\delta)[x^{i\delta}(\nu)-x^i(\nu)]-\mathcal V^i(\nu)$. Therefore, $\mathcal V^{i\delta}(0)=0$, and 
\[
x^{i\delta}(\nu)=x^i(\nu)+\delta\left[\mathcal V^i(\nu)+\mathcal V^{i\delta}(\nu)\right].
\]
Hence,
\begin{multline*}
\frac{\partial}{\partial \delta}L^i(s,\bm x,u^i)\bigg|_{\delta=0}=\lim_{\delta\searrow 0}\frac{1}{\delta}\E_s\biggr\{\int_s^{s+\epsilon}\biggr[L^i\left[\nu,x^{-i}(\nu),x^i(\nu)+\delta\left[\mathcal V^i(s)+\mathcal V^{i\delta}(s)\right],u^i(\nu)+\delta v^i(\nu)\right]\\
-L^i\left[\nu,\bm x(\nu),u^i(\nu)\right]\biggr]d\nu\biggr\}\\
=\lim_{\delta\searrow 0}\frac{1}{\delta}\E_s\biggr\{\int_s^{s+\epsilon}\int_0^1\left[\frac{d}{d\be}L^i\left[\nu,x^{-i}(\nu),x^i(\nu)+\delta\be\left[\mathcal V^i(s)+\mathcal V^{i\delta}(s)\right],u^i(\nu)+\delta\be v^i(\nu)\right]\right]d\be d\nu\biggr\}\\
=\lim_{\delta\searrow 0}\frac{1}{\delta}\E_s\biggr\{\int_s^{s+\epsilon}\int_0^1\left[\left[\mathcal V^i(\nu)+\mathcal V^{i\delta}(\nu)\right]\frac{\partial}{\partial x^i}L^i\left[\nu,x^{-i}(\nu),x^i(\nu)+\delta\be\left[\mathcal V^i(s)+\mathcal V^{i\delta}(s)\right],u^i(\nu)+\delta\be v^i(\nu)\right]\right.\\
\left.+v^i(\nu)\frac{\partial}{\partial u^i}L^i\left[\nu,x^{-i}(\nu),x^i(\nu)+\delta\be\left[\mathcal V^i(s)+\mathcal V^{i\delta}(s)\right],u^i(\nu)+\delta\be v^i(\nu)\right]\right]d\be d\nu\biggr\}\\
=\E_s\left\{\int_s^{s+\epsilon}\left[\mathcal V^i(\nu)\frac{\partial}{\partial x^i}L^i\left[\nu,\bm x(\nu),u^i(\nu)\right]+v^i(\nu)\frac{\partial}{\partial u^i}L^i\left[\nu,\bm x(\nu),u^i(\nu)\right]\right]d\nu\right\}.
\end{multline*}
Since for $\nu\in[s,s+\epsilon]$
\[
\frac{\partial}{\partial x^i}L^i\left[\nu,\bm x(\nu),u^i(\nu)\right]=\int_s^{s+\epsilon}\left\{\sum_{j\in\eta_i}w_{ij}\left[x^i(\nu)-x^j(\nu)\right]+k_i\left[x^i(\nu)-x^i_0\right]\right\}d\nu,
\]
and 
\[
\frac{\partial}{\partial u^i}L^i\left[\nu,\bm x(\nu),u^i(\nu)\right]=\int_s^{s+\epsilon}u^i(\nu)d\nu,
\]
then
\[
\frac{\partial}{\partial \delta}L^i(s,\bm x,u^i)\bigg|_{\delta=0}=\E_s\left\{\int_s^{s+\epsilon}\left[\mathcal V^i(\nu)\sum_{j\in\eta_i}\left(w_{ij}\left[x^i(\nu)-x^j(\nu)\right]+k_i\left[x^i(\nu)-x_0^i\right]\right)+u^i(\nu)v^i(\nu)\right]d\nu\right\}.
\]
This completes the proof. $\square$

\medskip

\subsection*{Proof of Lemma \ref{lem4}}

\medskip

For all $\omega\in\Omega$ the expectation of the independent copy is defined as
\begin{multline*}
\tilde\E\left\{\left[\partial_\gamma\tilde{\mu}^i((s,\tilde{\bm x},\bm x,\tilde\lambda^i_1,\tilde\lambda^i_2,\tilde u^i))+\partial_\gamma\tilde{\sigma}^i(s,\tilde{\bm x},\bm x,\tilde\lambda^i_1,\tilde\lambda^i_2,\tilde u^i)\right]ds+\partial_\gamma\tilde{L}^i(s,\tilde{\bm x},\bm x,\tilde\lambda^i_1,\tilde\lambda^i_2,\tilde u^i)\bigg|_{\bm x=\bm x(s)}\right\}\\
:=\int_\Omega\biggr[\partial_\gamma\tilde{\mu}^i(s,\omega,\tilde\omega,\tilde{\bm x},\bm x,\tilde\lambda^i_1(\omega),\tilde\lambda^i_1(\tilde\omega),\tilde\lambda^i_2(\omega),\tilde\lambda^i_2(\tilde\omega),\tilde u^i)\\
+\partial_\gamma\tilde{\sigma}^i\left(s,\omega,\tilde\omega,\tilde{\bm x},\bm x,\tilde\lambda^i_1(\omega),\tilde\lambda^i_1(\tilde\omega),\tilde\lambda^i_2(\omega),\tilde\lambda^i_2(\tilde\omega),\tilde u^i\right)\\
+\partial_\gamma\tilde{L}^i\left(s,\omega,\tilde\omega,\tilde{\bm x},\bm x,\tilde\lambda^i_1(\omega),\tilde\lambda^i_1(\tilde\omega),\tilde\lambda^i_2(\omega),\tilde\lambda^i_2(\tilde\omega),\tilde u^i\right)\bigg|_{\bm x=\bm x(s)}\biggr]\mathbb P(d\tilde\omega)
\end{multline*}
For a constant $\rho\in(0,\infty)$ define a norm
\[
\left\|(\lambda_1^i,\lambda_2^i)\right\|_\rho^2:=\E\left\{\int_0^t\left[|\lambda_1^i(s)|^2+|\lambda_2^i(s)|^2\right]\exp(\rho s)ds\right\}.
\]
Let $(\lambda_1^{i \#},\lambda_2^{i \#})\in\mathcal H^{2,\tilde m}$ be another set of adjoint processes. The by Proposition 2.2 in \cite{pardoux1990} and Theorem 2.2 in \cite{carmona2016} there exists a unique solution $(\bm\lambda_1^{i*},\bm\lambda_2^{i*})$ of the adjoint process
\begin{multline*}
d\lambda_1^i(s)=-\partial_x	\mathcal L^i(s,\bm x,\mathbb P_{\bm x},\lambda_1^{i \#},\lambda_2^{i \#},\lambda^i_1,\lambda^i_2,u^i)+\lambda^i_2(s)dB^i(s)\\-\tilde\E\left[\partial_\gamma\tilde{\mathcal L}^i(s,\tilde{\bm x},\mathbb P_{\tilde{\bm x}},\lambda_1^{i \#},\lambda_2^{i \#},\tilde\lambda^i_1,\tilde\lambda^i_2,\tilde u^i)\right][x^i(s)].
\end{multline*}
Since, the above adjoint process is a forward looking process, the linear part represented by \\ $\partial_x	\mathcal L^i(s,\bm x,\mathbb P_{\bm x},\lambda_1^{i \#},\lambda_2^{i \#},\lambda^i_1,\lambda^i_2,u^i)$ has a unique solution $\left(\bm\lambda_1^{i*},\bm\lambda_2^{i*}\right)$ since at time $0$ the agent $i$ only makes expectations based on the available information at that time. There exists a map $M$ such that $\left(\lambda_1^{i \#},\lambda_2^{i \#}\right)\hookrightarrow\left(\bm\lambda_1^{i*},\bm\lambda_2^{i*}\right)=M\left(\lambda_1^{i \#},\lambda_2^{i \#}\right)$ from $\mathcal H^{2,\tilde m}$ into itself. Since, $\lambda_1^i\in\mathcal H_{\bm\lambda_1}^{2,\tilde m}$, for an appropriate choice of $\rho$ is necessary to show the existence of a strict contraction in $M$. Suppose, two pairs $\left(\lambda_1^{i\# 1},\lambda_2^{i\# 1}\right)\in\mathcal H^{2,\tilde m}$ and $\left(\lambda_1^{i\# 2},\lambda_2^{i\# 2}\right)\in\mathcal H^{2,\tilde m}$. Let $\left(\lambda_1^{i 1},\lambda_2^{i 1}\right)=M\left(\lambda_1^{i\# 1},\lambda_2^{i\# 1}\right)$, $\left(\lambda_1^{i 2},\lambda_2^{i 2}\right)=M\left(\lambda_1^{i\# 2},\lambda_2^{i\# 2}\right)$, $\left(\tilde\lambda_1^{i\#},\tilde\lambda_2^{i\#}\right)=\left(\lambda_1^{i\# 2}-\lambda_1^{i\# 1},\lambda_2^{i\# 2}-\lambda_2^{i\# 1}\right)$, and $\left(\tilde\lambda_1^{i1}-\tilde\lambda_1^{i2}\right)=\left(\lambda_1^{i 2}-\lambda_1^{i 1},\lambda_2^{i 2}-\lambda_2^{i 1}\right)$. For all $s\in[0,t]$ implementing It\^o formula to $|\tilde\lambda_1^i(s)|^2\exp(\rho s)$ yields,
\begin{multline*}
|\tilde\lambda_1^i(s)|^2+\E\left\{\int_s^t\rho\exp[\rho(\nu-s)]\left|\tilde\lambda_1^i(\nu)\right|^2d\nu\bigg|\mathcal F_s\right\}+\E\left\{\int_s^t\rho\exp[\rho(\nu-s)]\left|\tilde\lambda_2^i(\nu)\right|^2d\nu\bigg|\mathcal F_s\right\}\\
=\E\biggr\{2\exp[\rho(\nu-s)]\left[\Xi\left(\nu,\tilde{\bm x},\bm x,\lambda^{i\#2}_1(\nu),\lambda^{i\#2}_2(\nu),\lambda^{i2}_1(\nu),\lambda^{i2}_2(\nu),u^i\right)\right.\\
\left.-\Xi\left(\nu,\tilde{\bm x},\bm x,\lambda^{i\#1}_1(\nu),\lambda^{i\#1}_2(\nu),\lambda^{i1}_1(\nu),\lambda^{i1}_2(\nu),u^i\right)\right]d\nu\bigg|\mathcal F_s\biggr\},
\end{multline*} 
where
\begin{multline*}
\Xi\left(\nu,\tilde{\bm x},\bm x,\lambda^{i\#2}_1(\nu),\lambda^{i\#2}_2(\nu),\lambda^{i2}_1(\nu),\lambda^{i2}_2(\nu),u^i\right)\\
=\partial_\gamma\tilde\mu^i\left(\nu,\tilde{\bm x},\bm x,\lambda^{i\#2}_1(\nu),\lambda^{i\#2}_2(\nu),\lambda^{i2}_1(\nu),\lambda^{i2}_2(\nu),u^i\right)+\partial_\gamma\tilde\sigma^i\left(\nu,\tilde{\bm x},\bm x,\lambda^{i\#2}_1(\nu),\lambda^{i\#2}_2(\nu),\lambda^{i2}_1(\nu),\lambda^{i2}_2(\nu),u^i\right)\\
+\partial_\gamma\tilde L^i\left(\nu,\tilde{\bm x},\bm x,\lambda^{i\#2}_1(\nu),\lambda^{i\#2}_2(\nu),\lambda^{i2}_1(\nu),\lambda^{i2}_2(\nu),u^i\right),
\end{multline*}
and
\begin{multline*}
\Xi\left(\nu,\tilde{\bm x},\bm x,\lambda^{i\#1}_1(\nu),\lambda^{i\#1}_2(\nu),\lambda^{i1}_1(\nu),\lambda^{i1}_2(\nu),u^i\right)\\
=\partial_\gamma\tilde\mu^i\left(\nu,\tilde{\bm x},\bm x,\lambda^{i\#1}_1(\nu),\lambda^{i\#1}_2(\nu),\lambda^{i1}_1(\nu),\lambda^{i1}_2(\nu),u^i\right)+\partial_\gamma\tilde\sigma^i\left(\nu,\tilde{\bm x},\bm x,\lambda^{i\#1}_1(\nu),\lambda^{i\#1}_2(\nu),\lambda^{i1}_1(\nu),\lambda^{i1}_2(\nu),u^i\right)\\
+\partial_\gamma\tilde L^i\left(\nu,\tilde{\bm x},\bm x,\lambda^{i\#1}_1(\nu),\lambda^{i\#1}_2(\nu),\lambda^{i1}_1(\nu),\lambda^{i1}_2(\nu),u^i\right).
\end{multline*}
Let $\rho=16a^1+4a+1$. Then by implementing condition (v) of Assumption \ref{as1} and $\mathcal F$-integrability of $\Xi$ in $\mathcal H^{2,\tilde m}$ yields
\begin{multline*}
\left(\frac{1}{2}\rho-2a-2a^2\right)\E\left\{\exp(\rho\nu)\left|\tilde\lambda_1^i(\nu)\right|^2d\nu\right\}+\frac{1}{2}\E\left\{\exp(\rho\nu)\left|\tilde\lambda_2^i(\nu)\right|^2d\nu\right\}\\
\leq\frac{4a^2}{\rho}\left[\E\left\{\exp(\rho\nu)\left|\tilde\lambda_1^{i\#}(\nu)\right|^2d\nu\right\}+\frac{1}{2}\E\left\{\exp(\rho\nu)\left|\tilde\lambda_2^{i\#}(\nu)\right|^2d\nu\right\}\right],
\end{multline*}
which implies
\[
\E\left\{\int_0^t\exp(\rho s)\left[\left|\tilde\lambda_1^i(s)\right|^2+\left|\tilde\lambda_2^i(s)\right|^2\right]ds\right\}\leq\frac{1}{2}\E\left\{\int_0^t\exp(\rho s)\left[\left|\tilde\lambda_1^{i\#}(s)\right|^2+\left|\tilde\lambda_2^{i\#}(s)\right|^2\right]ds\right\}.
\]
Hence, $\left\|\tilde\lambda_1^i,\tilde\lambda_2^i\right\|_\rho\leq 2^{-1/2}\left\|\tilde\lambda_1^{i\#},\tilde\lambda_2^{i\#}\right\|_\rho$. This completes the proof. $\square$

\medskip

\subsection*{Proof of Proposition \ref{p0}.}

\medskip

The Euclidean action function of the system can be represented as 
\begin{align}
\mathcal A_{0,t}^i(x^i)&=\int_0^t\E_s\bigg\{L^i(s,\mathbf x,u^i)ds+\bigg[x^i(s)-x_0^i-\mu^i\left[s,x^i,\mathbb P_{(x^i)},u^i\right]ds-\sigma^i\left[s,x^i,\mathbb P_{(x^i)},u^i\right]\bigg]d\lambda^i(s)\bigg\},\notag
\end{align}
where $E_s$ is the conditional expectation on opinion $x^i(s)$ at the beginning of time $s$. For all $\varepsilon>0$, and the normalizing constant $L_\varepsilon^i>0$ , define a transitional function in small time interval as
\begin{align}\label{w16}
\Psi_{s,s+\varepsilon}^i(x^i)&:=\frac{1}{L_\varepsilon^i} \int_{\mathbb{R}} \exp\biggr\{-\varepsilon  \mathcal{A}_{s,s+\varepsilon}^i(x)\biggr\} \Psi_s^i(x^i) dx^i(s),
\end{align}	
for $\epsilon\downarrow 0$ and $\Psi_s^i(x^i)$ is the value of the transition function at time $s$ and opinion $x^i(s)$ with the initial condition $\Psi_0^i(x^i)=\Psi_0^i$ for all $i\in N$.

For continuous time interval $[s,\tau]$ where $\tau=s+\varepsilon$  the stochastic Lagrangian can be represented as,
\begin{align}\label{action}
\mathcal{A}_{s,\tau}^i(x)&= \int_{s}^{\tau}\ \E_s\ \biggr\{ L^i[\nu,\mathbf x(\nu),x_0^i,u^i(\nu)]\ d\nu+\bigg[x^i(\nu)-x_0^i-\mu^i\left[\nu,x^i,\mathbb P_{(x^i)},u^i\right]d\nu\notag\\
&\hspace{2cm}-\sigma^i\left[\nu,x^i,\mathbb P_{(x^i)},u^i\right]dB^i(\nu)\bigg]d\lambda^i(\nu) \biggr\},
\end{align}
with the constant initial condition $x^i(0)=x_0^i$.	This conditional expectation is valid when the control $u^i(\nu)$ of agent $i$'s opinion dynamics is determined at time $\nu$ such that all $n$-agents' opinions $\mathbf x(\nu)$ are given \citep{chow1996}. The evolution takes place as the action function is stationary. Therefore, the conditional expectation with respect to time only depends on the expectation of initial time point of interval $[s,\tau]$.

Fubini's Theorem implies,
\begin{align}\label{action5}
\mathcal{A}_{s,\tau}^i(x^i)&= \E_s\ \bigg\{ \int_{s}^{\tau}\ L^i[\nu,\mathbf x(\nu),x_0^i,u^i(\nu)]\ d\nu+\big[x^i(\nu)-x_0^i-\mu^i\left[\nu,x^i,\mathbb P_{(x^i)},u^i\right]d\nu\notag\\
&\hspace{2cm}-\sigma^i\left[\nu,x^i,\mathbb P_{(x^i)},u^i\right]dB^i(\nu)\big]d\lambda^i(\nu) \bigg\}.
\end{align}
By It\^o's Theorem there exists a function $h^i[\nu,x^i(\nu)]\in C^2([0,\infty)\times\mathbb{R})$ such that  $Y^i(\nu)=h^i[\nu,x^i(\nu)]$ where $Y^i(\nu)$ is an It\^o process \cite{oksendal2003}. Assuming 
\[
h^i[\nu+\Delta \nu,x^i(\nu)+\Delta x^i(\nu)]= x^i(\nu)-x_0^i-\mu^i\left[\nu,x^i(\nu),\mathbb P_{(x^i)},u^i(\nu)\right]d\nu-\sigma^i\left[\nu,x^i(\nu),\mathbb P_{(x^i)},u^i(\nu)\right]dB^i(\nu),
\]
Equation (\ref{action5}) implies,
\begin{align}\label{action6}
\mathcal{A}_{s,\tau}^i(x^i)&=\E_s \bigg\{ \int_{s}^{\tau}\ g^i[\nu,\mathbf x(\nu),u^i(\nu)]\ d\nu+ h^i\left[\nu+\Delta \nu,x^i(\nu)+\Delta x^i(\nu)\right]d\lambda^i(\nu)\bigg\}.
\end{align}
It\^o's Lemma implies,
\begin{align}\label{action7}
\varepsilon\mathcal{A}_{s,\tau}^i(x^i)&= \E_s \bigg\{\varepsilon L^i[s,\mathbf x(s),u^i(s)]+ \varepsilon h^i[s,x^i(s)]d\lambda^i(s)+ \varepsilon h_s^i[s,x^i(s)]d\lambda^i(s) \notag\\
&\hspace{.5cm}+\varepsilon h_x^i[s,x^i(s)]\mu^i\left[s,x^i(s),\mathbb P_{(x^i)},u^i(s)\right]d\lambda^i(s)\notag\\
&\hspace{1cm} +\varepsilon h_x^i[s,x^i(s)]\sigma^i\left[s,x^i(s),\mathbb P_{(x^i)},u^i(s)\right]d\lambda^i(s) dB^i(s)\notag\\
&\hspace{1.5cm}+\mbox{$\frac{1}{2}$}\varepsilon\left(\sigma^{i}\left[s,x^i(s),\mathbb P_{(x^i)},u^i(s)\right]\right)^2h_{xx}^i[s,x^i(s)]d\lambda^i(s)+o(\varepsilon)\bigg\},
\end{align}
where $h_s^i=\frac{\partial}{\partial s} h^i$, $h_x^i=\frac{\partial}{\partial x^i} h^i$ and $h_{xx}^i=\frac{\partial^2}{\partial (x^i)^2} h^i$, and we use the condition $[d x^i(s)]^2\approx\varepsilon$ with $d x^i(s)\approx\varepsilon\mu^i[s,x^i(s),u^i(s)]+\sigma^i[s,x^i(s),u^i(s)]dB^i(s)$. We use It\^o's Lemma and a similar approximation to approximate the integral. With $\varepsilon\downarrow 0$, dividing throughout by  $\varepsilon$ and taking the conditional expectation yields,
\begin{align}\label{action8}
\varepsilon\mathcal{A}_{s,\tau}^i(x^i)&= \E_s \bigg\{\varepsilon L^i[s,\mathbf x(s),u^i(s)]+\varepsilon h^i[s,x^i(s)]d\lambda^i(s)+ \varepsilon h_s^i[s,x^i(s)]d\lambda^i(s)\notag\\
&\hspace{.5cm}+\varepsilon h_x^i[s,x^i(s)]\mu^i\left[s,x^i(s),\mathbb P_{(x^i)},u^i(s)\right]d\lambda^i(s)\notag\\
&\hspace{1cm}+\mbox{$\frac{1}{2}$}\varepsilon\sigma^{2i}\left[s,x^i(s),\mathbb P_{(x^i)},u^i(s)\right]h_{xx}^i[s,x^i(s)]d\lambda^i(s)+o(1)\bigg\},
\end{align}
since $\E_s[dB^i(s)]=0$ and $\E_s[o(\varepsilon)]/\varepsilon\ra 0$ for all $\varepsilon\downarrow 0$ with the initial condition $x_0^i$. For $\varepsilon\downarrow 0$ denote a transition function at $s$ as $\Psi_s^i(x^i)$ for all $i\in N$. Hence, using Equation (\ref{w16}), the transition function yields
\begin{multline}\label{action9}
\Psi_{s,\tau}^i(x^i)=\frac{1}{L_\epsilon^i}\int_{\mathbb{R}} \exp\biggr\{-\varepsilon \big[L^i[s,\mathbf x(s),u^i(s)]+h^i[s,x^i(s)]d\lambda^i(s)\\
 +h_s^i[s,x^i(s)]d\lambda^i(s) +h_x^i[s,x^i(s)]\mu^i\left[s,x^i(s),\mathbb P_{(x^i)},u^i(s)\right]d\lambda^i(s)\\
+\mbox{$\frac{1}{2}$}\left(\sigma^{i}\left[s,x^i(s),\mathbb P_{(x^i)},u^i(s)\right]\right)^2h_{xx}^i[s,x^i(s)]d\lambda^i(s)\big]\biggr\} \Psi_s^i(x) dx^i(s)+o(\varepsilon^{1/2}).
\end{multline}
Since $\varepsilon\downarrow 0$, first order Taylor series expansion on the left hand side of Equation (\ref{action9}) yields
\begin{multline}\label{action10}
\Psi_{is}(x^i)+\varepsilon  \frac{\partial \Psi_{is}(x^i) }{\partial s}+o(\varepsilon)=\frac{1}{L_\varepsilon^i}\int_{\mathbb{R}} \exp\biggr\{-\varepsilon \big[L^i[s,\mathbf x(s),u^i(s)]+h^i[s,x^i(s)]d\lambda^i(s) \\
+h_s^i[s,x^i(s)]d\lambda^i(s)+h_x^i[s,x^i(s)]\mu^i\left[s,x^i(s),\mathbb P_{(x^i)},u^i(s)\right]d\lambda^i(s)\\
+\mbox{$\frac{1}{2}$}\left(\sigma^{i}\left[s,x^i(s),\mathbb P_{(x^i)},u^i(s)\right]\right)^2h_{xx}^i[s,x^i(s)]d\lambda^i(s)\big]\biggr\} \Psi_s^i(x) dx^i(s)+o(\varepsilon^{1/2}).
\end{multline}
For fixed $s$ and $\tau$ let $x^i(s)-x^i(\tau)=\xi^i$ so that $x^i(s)=x^i(\tau)+\xi^i$. When $\xi^i$ is not around zero, for a positive number $\eta<\infty$ we assume $|\xi^i|\leq\sqrt{\frac{\eta\varepsilon}{x^i(s)}}$ so that for $\varepsilon\downarrow 0$, $\xi^i$ takes even smaller values and agent $i$'s opinion $0<x^i(s)\leq\eta\varepsilon/(\xi^i)^2$. Therefore,
\begin{multline*}
\Psi_{is}(x^i)+\varepsilon\frac{\partial \Psi_{is}(x^i)}{\partial s}=\frac{1}{L_\epsilon^i}\int_{\mathbb{R}} \left[\Psi_{is}(x^i)+\xi^i\frac{\partial \Psi_{is}(x^i)}{\partial x^i}+o(\epsilon)\right]\\
\times \exp\biggr\{-\varepsilon \big[L^i[s,\mathbf x(s),u^i(s)]+h^i[s,x^i(s)]d\lambda^i(s)+h_x^i[s,x^i(s)]\mu^i\left[s,x^i(s),\mathbb P_{(x^i)},u^i(s)\right]d\lambda^i(s)\\
+\mbox{$\frac{1}{2}$}\left(\sigma^{i}\left[s,x^i(s),\mathbb P_{(x^i)},u^i(s)\right]\right)^2h_{xx}^i[s,x^i(s)]d\lambda^i(s)\big]\biggr\} d\xi^i+o(\varepsilon^{1/2}).
\end{multline*}
Before solving for Gaussian integral of the each term of the right hand side of the above Equation define a $C^2$ function 
\begin{align*}
f^i[s,\bm\xi,\lambda^i(s),\gamma,u^i(s)]&=L^i[s,\mathbf x(s)+\bm{\xi},u^i(s)]+h^i[s,x^i(s)+\xi^i]d\lambda^i(s) +h_s^i[s,x^i(s)+\xi^i]d\lambda^i(s)\notag\\
&\hspace{.25cm}+h_x^i[s,x^i(s)+\xi^i]\mu^i\left[s,x^i(s)+\xi,\mathbb P_{(x^i+\xi)},u^i(s)\right]d\lambda^i(s)\\
&\hspace{1cm}+\mbox{$\frac{1}{2}$}\sigma^{2i}\left[s,x^i(s)+\xi,\mathbb P_{(x^i+\xi)},u^i(s)\right]h_{xx}^i[s,x^i(s)+\xi^i]d\lambda^i(s)+o(1),
\end{align*}
where $\bm{\xi}$ is a vector of all $n$-agents' $\xi^i$'s.  Hence,
\begin{align}\label{action13}
\Psi_{is}(x^i)+\varepsilon \frac{\partial \Psi_{is}(x^i) }{\partial s}&=\Psi_{is}(x^i)\frac{1}{L_\epsilon^i}\int_{\mathbb{R}}\exp\left\{-\varepsilon f^i[s,\bm\xi,\lambda^i(s),\gamma,u^i(s)] \right\}d\xi^i\notag\\
&+\frac{\partial \Psi_{is}(x^i)}{\partial x^i}\frac{1}{L_\epsilon^i}\int_{\mathbb{R}}\xi^i\exp\left\{-\varepsilon f^i[s,\bm\xi,\lambda^i(s),\gamma,u^i(s)] \right\}d\xi^i+o(\varepsilon^{1/2}).
\end{align}
After taking $\varepsilon\downarrow 0$, $\Delta u\downarrow0$ and a Taylor series expansion with respect to $x^i$ of $f^i[s,\bm\xi,\lambda^i(s),\gamma,u^i(s)]$ yields, 
\begin{align*}
f^i[s,\bm\xi,\lambda^i(s),\gamma,u(s)]&=f^i[s,\mathbf x(\tau),\lambda^i(s),\gamma,u^i(s)]+f_x^i[s,\mathbf x(\tau),\lambda^i(s),\gamma,u^i(s)][\xi^i-x^i(\tau)]\notag\\
&\hspace{1cm}+\mbox{$\frac{1}{2}$}f_{xx}^i[s,\mathbf x(\tau),\lambda^i(s),\gamma,u^i(s)][\xi^i-x^i(\tau)]^2+o(\varepsilon).
\end{align*}
Define $y^i:=\xi^i-x^i(\tau)$ so that $ d\xi^i=dy^i$. First integral on the right hand side of Equation (\ref{action13}) becomes,
\begin{align}\label{action14}
&\int_{\mathbb{R}} \exp\big\{-\varepsilon f^i[s,\bm\xi,\lambda^i(s),\gamma,u^i(s)]\} d\xi^i\notag\\
&=\exp\big\{-\varepsilon f^i[s,\mathbf x(\tau),\lambda^i(s),\gamma,u^i(s)]\big\}\notag\\
&\hspace{1cm}\times\int_{\mathbb{R}} \exp\biggr\{-\varepsilon \biggr[f_x^i[s,\mathbf x(\tau),\lambda^i(s),\gamma,u^i(s)]y^i+\mbox{$\frac{1}{2}$}f_{xx}^i[s,\mathbf x(\tau),\lambda^i(s),\gamma,u^i(s)](y^i)^2\biggr]\biggr\} dy^i.
\end{align}
Assuming  $a^i=\frac{1}{2} f_{xx}^i[s,\mathbf x(\tau),\lambda^i(s),\gamma,u^i(s)]$ and $b^i=f_x^i[s,\mathbf x(\tau),\lambda^i(s),\gamma,u^i(s)]$ the argument of the exponential function in Equation (\ref{action14}) becomes,
\begin{align}\label{action15}
a^i(y^i)^2+b^iy^i&=a^i\left[(y^i)^2+\frac{b^i}{a^i}y^i\right]=a^i\left(y^i+\frac{b^i}{2a^i}y^i\right)^2-\frac{(b^i)^2}{4(a^i)^2}.
\end{align}
Therefore,
\begin{align}\label{action16}
&\exp\big\{-\varepsilon f^i[s,\mathbf x(\tau),\lambda^i(s),\gamma,u^i(s)]\big\}\int_{\mathbb{R}} \exp\big\{-\varepsilon [a^i(y^i)^2+b^iy^i]\big\}dy^i\notag\\
&=\exp\left\{\varepsilon \left[\frac{(b^i)^2}{4(a^i)^2}-f^i[s,\mathbf x(\tau),\lambda^i(s),\gamma,u^i(s)]\right]\right\}\int_{\mathbb{R}} \exp\left\{-\left[\varepsilon a^i\left(y^i+\frac{b^i}{2a^i}y^i\right)^2\right]\right\} dy^i\notag\\
&=\sqrt{\frac{\pi}{\varepsilon a^i}}\exp\left\{\varepsilon \left[\frac{(b^i)^2}{4(a^i)^2}-f^i[s,\mathbf x(\tau),\lambda^i(s),\gamma,u^i(s)]\right]\right\},
\end{align}
and
\begin{align}\label{action17}
&\Psi_{is}(x^i) \frac{1}{L_\varepsilon^i} \int_{\mathbb{R}} \exp\big\{-\varepsilon f^i[s,\bm\xi,\lambda^i(s),\gamma,u^i(s)]\} d\xi^i\notag\\ &=\Psi_{is}(x) \frac{1}{L_\varepsilon^i} \sqrt{\frac{\pi}{\varepsilon a^i}}\exp\left\{\varepsilon \left[\frac{(b^i)^2}{4(a^i)^2}-f^i[s,\mathbf x(\tau),\lambda^i(s),\gamma,u^i(s)]\right]\right\}. 
\end{align}
Substituting $\xi^i=x^i(\tau)+y^i$ second integrand of the right hand side of Equation (\ref{action13}) yields,
\begin{align}\label{action18}
& \int_{\mathbb{R}} \xi^i \exp\left[-\varepsilon \{f^i[s,\bm\xi,\lambda^i(s),\gamma,u^i(s)]\}\right] d\xi^i\notag\\
&=\exp\{-\varepsilon f^i[s,\mathbf x(\tau),\lambda^i(s),\gamma,u^i(s)]\}\int_{\mathbb{R}} [x^i(\tau)+y^i] \exp\left[-\varepsilon \left[a^i(y^i)^2+b^iy^i\right]\right] dy^i\notag\\
&=\exp\left\{\varepsilon \left[\frac{(b^i)^2}{4(a^i)^2}-f^i[s,\mathbf x(\tau),\lambda^i(s),\gamma,u^i(s)]\right]\right\} \biggr[x^i(\tau)\sqrt{\frac{\pi}{\varepsilon a^i}}\notag\\
&\hspace{1cm}+\int_{\mathbb{R}} y^i \exp\left\{-\varepsilon \left[a^i\left(y^i+\frac{b^i}{2a^i}y^i\right)^2\right]\right\} dy^i\biggr].
\end{align}
Substituting $k^i=y^i+b^i/(2a^i)$ in Equation (\ref{action18}) yields,
\begin{align}\label{action19}
&\exp\left\{\varepsilon \left[\frac{(b^i)^2}{4(a^i)^2}-f^i[s,\mathbf x(\tau),\lambda^i(s),\gamma,u^i(s)]\right]\right\} \biggr[x^i(\tau)\sqrt{\frac{\pi}{\varepsilon a^i}}+\int_{\mathbb{R}} \left(k^i-\frac{b^i}{2a^i}\right) \exp[-a^i\varepsilon (k^i)^2] dk^i\biggr]\notag\\
&=\exp\left\{\varepsilon \left[\frac{(b^i)^2}{4(a^i)^2}-f^i[s,\mathbf x(\tau),\lambda^i(s),\gamma,u^i(s)]\right]\right\} \biggr[x^i(\tau)-\frac{b^i}{2a^i}\biggr]\sqrt{\frac{\pi}{\varepsilon a^i}}.
\end{align}
Hence,
\begin{align}\label{action20}
&\frac{1}{L_\varepsilon^i}\frac{\partial \Psi_{is}(x^i)}{\partial x^i}\int_{\mathbb{R}} \xi^i \exp\left[-\varepsilon f[s,\bm\xi,\lambda^i(s),\gamma,u^i(s)]\right] d\xi^i\notag\\
&=\frac{1}{L_\varepsilon^i}\frac{\partial \Psi_{is}(x^i)}{\partial x^i} \exp\left\{\varepsilon \left[\frac{(b^i)^2}{4(a^i)^2}-f^i[s,\mathbf x(\tau),\lambda^i(s),\gamma,u^i(s)]\right]\right\} \biggr[x^i(\tau)-\frac{b^i}{2a^i}\biggr]\sqrt{\frac{\pi}{\varepsilon a^i}}.
\end{align}
Plugging in Equations (\ref{action17}), and (\ref{action20})  into Equation (\ref{action13}) implies,
\begin{align}\label{action24}
&\Psi_{is}(x^i)+\varepsilon \frac{\partial \Psi_{is}(x^i)}{\partial s}\notag\\
&=\frac{1}{L_\varepsilon^i} \sqrt{\frac{\pi}{\varepsilon a^i}}\Psi_{is}(x^i) \exp\left\{\varepsilon \left[\frac{(b^i)^2}{4(a^i)^2}-f^i[s,\mathbf x(\tau),\lambda^i(s),\gamma,u^i(s)]\right]\right\}\notag\\
&+\frac{1}{L_\varepsilon^i}\frac{\partial \Psi_{is}(x^i)}{\partial x^i} \sqrt{\frac{\pi}{\varepsilon a^i}} \exp\left\{\varepsilon \left[\frac{(b^i)^2}{4(a^i)^2}-f^i[s,\mathbf x(\tau),\lambda^i(s),\gamma,u^i(s)]\right]\right\} \biggr[x^i(\tau)-\frac{b^i}{2a^i}\biggr]+o(\varepsilon^{1/2}).
\end{align}
Let $f^i$ be in Schwartz space. This leads to derivatives are rapidly falling  and  further assuming $0<|b^i|\leq\eta\varepsilon$, $0<|a^i|\leq\mbox{$\frac{1}{2}$}[1-(\xi^i)^{-2}]^{-1}$ and $x^i(s)-x^i(\tau)=\xi^i$ yields,
\begin{align*}
x^i(\tau)-\frac{b^i}{2a^i}=x^i(s)-\xi^i-\frac{b^i}{2a^i}=x^i(s)-\frac{b^i}{2a^i},\ \forall\ \xi\downarrow 0,
\end{align*}
such that 
\begin{align*}
\bigg|x^i(s)-\frac{b^i}{2a^i}\bigg|=\bigg|\frac{\eta\varepsilon}{(\xi^i)^2}-\eta\varepsilon\left[1-\frac{1}{(\xi^i)^2}\right]\bigg|\leq\eta\varepsilon.
\end{align*}
Therefore, the Fokker-Plank-type Equation for agent $i$ is,
\begin{align}\label{action25.4}
\frac{\partial \Psi_{is}(x)}{\partial s}&=\left[\frac{(b^i)^2}{4(a^i)^2}-f^i[s,\mathbf x(\tau),\lambda^i(s),\gamma,u^i(s)]\right]\Psi_{is}(x).
\end{align}
Differentiating the Equation (\ref{action25.4}) with respect to $u^i$ yields an optimal control of agent $i$ under this opinion dynamics
\begin{align}\label{w18}
\left\{\frac{2f_x^i}{f_{xx}^i}\left[\frac{f_{xx}^if_{xu}^i-f_x^if_{xxu}^i}{(f_{xx}^i)^2}\right]-f_u^i\right\}\Psi_{is}(x)=0,
\end{align}
where $f_x^i=\frac{\partial}{\partial x^i} f^i$, $f_{xx}^i=\frac{\partial^2}{\partial (x^i)^2} f^i$, $f_{xu}^i=\frac{\partial^2}{\partial x^i\partial u^i} f^i$ and $f_{xxu}^i=\frac{\partial^3}{\partial (x^i)^2\partial u^i} f^i=0$. Thus, optimal feedback control of agent $i$ in stochastic opinion dynamics is represented as $u^{i*}(s,x^i)$ and is found by setting Equation (\ref{w18}) equal to zero. Hence, $u^{i*}(s,x^i)$ is the solution of the following Equation
\begin{align}\label{w21}
f_u^i (f_{xx}^i)^2=2f_x^i f_{xu}^i.\ \ \square
\end{align}

\bibliographystyle{apalike}
\bibliography{mybibfile}

\begin{thebibliography}{}

\bibitem[Acemo{\u{g}}lu et~al., 2013]{acemouglu2013opinion}
Acemo{\u{g}}lu, D., Como, G., Fagnani, F., and Ozdaglar, A. (2013).
\newblock Opinion fluctuations and disagreement in social networks.
\newblock {\em Mathematics of Operations Research}, 38(1):1--27.

\bibitem[Acemo{\u{g}}lu and Ozdaglar, 2011]{acemoglu2011opinion}
Acemo{\u{g}}lu, D. and Ozdaglar, A. (2011).
\newblock Opinion dynamics and learning in social networks.
\newblock {\em Dynamic Games and Applications}, 1:3--49.

\bibitem[Andersson and Djehiche, 2011]{andersson2011maximum}
Andersson, D. and Djehiche, B. (2011).
\newblock A maximum principle for sdes of mean-field type.
\newblock {\em Applied Mathematics \& Optimization}, 63:341--356.

\bibitem[Baaquie, 2007]{belal2007}
Baaquie, B.~E. (2007).
\newblock {\em Quantum finance: Path integrals and Hamiltonians for options and
  interest rates}.
\newblock Cambridge University Press.

\bibitem[Bauso et~al., 2016]{bauso2016opinion}
Bauso, D., Tembine, H., and Basar, T. (2016).
\newblock Opinion dynamics in social networks through mean-field games.
\newblock {\em SIAM Journal on Control and Optimization}, 54(6):3225--3257.

\bibitem[Bochner et~al., 1949]{bochner1949}
Bochner, S., Chandrasekharan, K., et~al. (1949).
\newblock {\em Fourier transforms}.
\newblock Number~19. Princeton University Press.

\bibitem[Brugna and Toscani, 2015]{brugna2015kinetic}
Brugna, C. and Toscani, G. (2015).
\newblock Kinetic models of opinion formation in the presence of personal
  conviction.
\newblock {\em Physical Review E}, 92(5):052818.

\bibitem[Bulls et~al., 2025]{bulls2025assessing}
Bulls, S.~E., Finn, E., Sykora, P., Lynch, V.~J., Pramanik, P., Glaberman, S.,
  and Chiari, Y. (2025).
\newblock Assessing cometchip technology for dna damage studies in non-model
  species: distinct uv-induced responses in turtles and mammals.
\newblock {\em BMC Research Notes}, 18(1):1--7.

\bibitem[Calvo-Armengol and Jackson, 2004]{calvo2004}
Calvo-Armengol, A. and Jackson, M.~O. (2004).
\newblock The effects of social networks on employment and inequality.
\newblock {\em American economic review}, 94(3):426--454.

\bibitem[Calv{\'o}-Armengol et~al., 2009]{calvo2009}
Calv{\'o}-Armengol, A., Patacchini, E., and Zenou, Y. (2009).
\newblock Peer effects and social networks in education.
\newblock {\em The Review of Economic Studies}, 76(4):1239--1267.

\bibitem[Cardaliaguet, 2012]{cardaliaguet2012}
Cardaliaguet, P. (2012).
\newblock Notes on mean field games. notes from p.l. lions' lecture at the
  coll\'ege de france.
\newblock Technical report, Technical report.

\bibitem[Carmona, 2016]{carmona2016}
Carmona, R. (2016).
\newblock {\em Lectures on BSDEs, stochastic control, and stochastic
  differential games with financial applications}.
\newblock SIAM.

\bibitem[Carmona and Delarue, 2015]{carmona2015}
Carmona, R. and Delarue, F. (2015).
\newblock Forward--backward stochastic differential equations and controlled
  mckean--vlasov dynamics.
\newblock {\em The Annals of Probability}, 43(5):2647--2700.

\bibitem[Carmona et~al., 2013]{carmona2013control}
Carmona, R., Delarue, F.~c., and Lachapelle, A. (2013).
\newblock Control of mckean--vlasov dynamics versus mean field games.
\newblock {\em Mathematics and Financial Economics}, 7:131--166.

\bibitem[Castellano et~al., 2009]{castellano2009statistical}
Castellano, C., Fortunato, S., and Loreto, V. (2009).
\newblock Statistical physics of social dynamics.
\newblock {\em Reviews of modern physics}, 81(2):591.

\bibitem[Chazelle et~al., 2017]{chazelle2017well}
Chazelle, B., Jiu, Q., Li, Q., and Wang, C. (2017).
\newblock Well-posedness of the limiting equation of a noisy consensus model in
  opinion dynamics.
\newblock {\em Journal of Differential Equations}, 263(1):365--397.

\bibitem[Chen et~al., 2022]{chen2022deep}
Chen, T., Wang, Z., and Theodorou, E.~A. (2022).
\newblock Deep graphic fbsdes for opinion dynamics stochastic control.
\newblock In {\em 2022 IEEE 61st Conference on Decision and Control (CDC)},
  pages 4652--4659. IEEE.

\bibitem[Chow, 1996]{chow1996}
Chow, G.~C. (1996).
\newblock The lagrange method of optimization with applications to portfolio
  and investment decisions.
\newblock {\em Journal of Economic Dynamics and Control}, 20(1-3):1--18.

\bibitem[Conley and Udry, 2010]{conley2010}
Conley, T.~G. and Udry, C.~R. (2010).
\newblock Learning about a new technology: Pineapple in ghana.
\newblock {\em American economic review}, 100(1):35--69.

\bibitem[Cosso et~al., 2023]{cosso2023}
Cosso, A., Gozzi, F., Kharroubi, I., Pham, H., and Rosestolato, M. (2023).
\newblock Optimal control of path-dependent mckean-vlasov sdes in infinite
  dimension.
\newblock {\em The Annals of Applied Probability (to appear)}.

\bibitem[Dasgupta et~al., 2023]{dasgupta2023frequent}
Dasgupta, S., Acharya, S., Khan, M.~A., Pramanik, P., Marbut, S.~M., Yunus, F.,
  Galeas, J.~N., Singh, S., Singh, A.~P., and Dasgupta, S. (2023).
\newblock Frequent loss of cacna1c, a calcium voltage-gated channel subunit is
  associated with lung adenocarcinoma progression and poor prognosis.
\newblock {\em Cancer Research}, 83(7\_Supplement):3318--3318.

\bibitem[Erd{\"o}s and R{\'e}nyi, 1959]{erdos1959}
Erd{\"o}s, P. and R{\'e}nyi, A. (1959).
\newblock “on random graphs,”.
\newblock {\em Publicationes Mathematicae}, 6:290--297.

\bibitem[Ewald and Nolan, 2024]{ewald2024adaptation}
Ewald, C.~O. and Nolan, C. (2024).
\newblock On the adaptation of the lagrange formalism to continuous time
  stochastic optimal control: A lagrange-chow redux.
\newblock {\em Journal of Economic Dynamics and Control}, 162:104855.

\bibitem[Feynman, 1948]{feynman1948}
Feynman, R.~P. (1948).
\newblock Space-time approach to non-relativistic quantum mechanics.
\newblock {\em Reviews of Modern Physics}, 20(2):367.

\bibitem[Feynman, 1949]{feynman1949}
Feynman, R.~P. (1949).
\newblock Space-time approach to quantum electrodynamics.
\newblock {\em Physical Review}, 76(6):769.

\bibitem[Friedkin and Johnsen, 1990]{friedkin1990social}
Friedkin, N.~E. and Johnsen, E.~C. (1990).
\newblock Social influence and opinions.
\newblock {\em Journal of mathematical sociology}, 15(3-4):193--206.

\bibitem[Fujiwara, 2017]{fujiwara2017}
Fujiwara, D. (2017).
\newblock {\em Rigorous time slicing approach to Feynman path integrals}.
\newblock Springer.

\bibitem[Hebb, 2005]{hebb2005}
Hebb, D.~O. (2005).
\newblock {\em The organization of behavior: A neuropsychological theory}.
\newblock Psychology Press.

\bibitem[Hertweck et~al., 2023]{hertweck2023clinicopathological}
Hertweck, K.~L., Vikramdeo, K.~S., Galeas, J.~N., Marbut, S.~M., Pramanik, P.,
  Yunus, F., Singh, S., Singh, A.~P., and Dasgupta, S. (2023).
\newblock Clinicopathological significance of unraveling mitochondrial pathway
  alterations in non-small-cell lung cancer.
\newblock {\em The FASEB Journal}, 37(7):e23018.

\bibitem[Hua et~al., 2019]{hua2019}
Hua, L., Polansky, A., and Pramanik, P. (2019).
\newblock Assessing bivariate tail non-exchangeable dependence.
\newblock {\em Statistics \& Probability Letters}, 155:108556.

\bibitem[Huang et~al., 2003]{huang2003individual}
Huang, M., Caines, P.~E., and Malham\'e, R.~P. (2003).
\newblock Individual and mass behaviour in large population stochastic wireless
  power control problems: centralized and nash equilibrium solutions.
\newblock In {\em 42nd IEEE International Conference on Decision and Control
  (IEEE Cat. No. 03CH37475)}, volume~1, pages 98--103. IEEE.

\bibitem[Kac, 1956]{kac1956foundations}
Kac, M. (1956).
\newblock Foundations of kinetic theory.
\newblock In {\em Proceedings of The third Berkeley symposium on mathematical
  statistics and probability}, volume~3, pages 171--197.

\bibitem[Kakkat et~al., 2023]{kakkat2023cardiovascular}
Kakkat, S., Pramanik, P., Singh, S., Singh, A.~P., Sarkar, C., and Chakroborty,
  D. (2023).
\newblock Cardiovascular complications in patients with prostate cancer:
  Potential molecular connections.
\newblock {\em International Journal of Molecular Sciences}, 24(8):6984.

\bibitem[Kappen, 2005]{kappen2005}
Kappen, H.~J. (2005).
\newblock Path integrals and symmetry breaking for optimal control theory.
\newblock {\em Journal of statistical mechanics: theory and experiment},
  2005(11):P11011.

\bibitem[Kappen, 2007]{kappen2007}
Kappen, H.~J. (2007).
\newblock An introduction to stochastic control theory, path integrals and
  reinforcement learning.
\newblock In {\em AIP conference proceedings}, volume 887, pages 149--181. AIP.

\bibitem[Khan et~al., 2023a]{khan2023myb0}
Khan, M., Acharya, S., Anand, S., Sameeta, F., Pramanik, P., Keel, C., Singh,
  S., Carter, J., Dasgupta, S., and Singh, A. (2023a).
\newblock Myb exhibits racially disparate expression, clinicopathologic
  association, and predictive potential for biochemical recurrence in prostate
  cancer, iscience. 26 (2023) 108487.

\bibitem[Khan et~al., 2023b]{khan2023myb}
Khan, M.~A., Acharya, S., Anand, S., Sameeta, F., Pramanik, P., Keel, C.,
  Singh, S., Carter, J.~E., Dasgupta, S., and Singh, A.~P. (2023b).
\newblock Myb exhibits racially disparate expression, clinicopathologic
  association, and predictive potential for biochemical recurrence in prostate
  cancer.
\newblock {\em Iscience}, 26(12).

\bibitem[Khan et~al., 2024]{khan2024mp60}
Khan, M.~A., Acharya, S., Kreitz, N., Anand, S., Sameeta, F., Pramanik, P.,
  Keel, C., Singh, S., Carter, J., Dasgupta, S., et~al. (2024).
\newblock Mp60-05 myb exhibits racially disparate expression and
  clinicopathologic association and is a promising predictor of biochemical
  recurrence in prostate cancer.
\newblock {\em Journal of Urology}, 211(5S):e1000.

\bibitem[Lasry and Lions, 2007]{lasry2007mean}
Lasry, J.-M. and Lions, P.-L. (2007).
\newblock Mean field games.
\newblock {\em Japanese journal of mathematics}, 2(1):229--260.

\bibitem[Love and Turner, 1993]{love1993note}
Love, C. and Turner, M. (1993).
\newblock Note on utilizing stochastic optimal control in aggregate production
  planning.
\newblock {\em European journal of operational research}, 65(2):199--206.

\bibitem[Lu et~al., 2021]{lu2021learning}
Lu, F., Maggioni, M., and Tang, S. (2021).
\newblock Learning interaction kernels in heterogeneous systems of agents from
  multiple trajectories.
\newblock {\em Journal of machine learning research}, 22(32):1--67.

\bibitem[Maki et~al., 2025]{maki2025new}
Maki, E., Glimm, T., Pramanik, P., Chiari, Y., and Kiskowski, M. (2025).
\newblock New approaches for capturing and estimating variation in complex
  animal color patterns from digital photographs: application to the eastern
  box turtle (terrapene carolina).
\newblock {\em PeerJ}, 13:e19690.

\bibitem[Marcet and Marimon, 2019]{marcet2019}
Marcet, A. and Marimon, R. (2019).
\newblock Recursive contracts.
\newblock {\em Econometrica}, 87(5):1589--1631.

\bibitem[Mas-Colell et~al., 1995]{mas1995}
Mas-Colell, A., Whinston, M.~D., Green, J.~R., et~al. (1995).
\newblock {\em Microeconomic theory}, volume~1.
\newblock Oxford university press New York.

\bibitem[McKean, 1967]{mckean1967propagation}
McKean, H.~P. (1967).
\newblock Propagation of chaos for a class of non-linear parabolic equations.
\newblock {\em Stochastic Differential Equations (Lecture Series in
  Differential Equations, Session 7, Catholic Univ., 1967)}, pages 41--57.

\bibitem[Moretti, 2011]{moretti2011}
Moretti, E. (2011).
\newblock Social learning and peer effects in consumption: Evidence from movie
  sales.
\newblock {\em The Review of Economic Studies}, 78(1):356--393.

\bibitem[Nakajima, 2007]{nakajima2007}
Nakajima, R. (2007).
\newblock Measuring peer effects on youth smoking behaviour.
\newblock {\em The Review of Economic Studies}, 74(3):897--935.

\bibitem[Niazi et~al., 2016]{niazi2016}
Niazi, M. U.~B., {\"O}zg{\"u}ler, A.~B., and Yildiz, A. (2016).
\newblock Consensus as a nash equilibrium of a dynamic game.
\newblock In {\em 2016 12th International Conference on Signal-Image Technology
  \& Internet-Based Systems (SITIS)}, pages 365--372. IEEE.

\bibitem[{\O}ksendal, 2003]{oksendal2003}
{\O}ksendal, B. (2003).
\newblock Stochastic differential equations.
\newblock In {\em Stochastic differential equations}, pages 65--84. Springer.

\bibitem[Pardoux and Peng, 1990]{pardoux1990}
Pardoux, E. and Peng, S. (1990).
\newblock Adapted solution of a backward stochastic differential equation.
\newblock {\em Systems \& control letters}, 14(1):55--61.

\bibitem[Polansky and Pramanik, 2021]{polansky2021motif}
Polansky, A.~M. and Pramanik, P. (2021).
\newblock A motif building process for simulating random networks.
\newblock {\em Computational Statistics \& Data Analysis}, 162:107263.

\bibitem[Pramanik, 2016]{pramanik2016}
Pramanik, P. (2016).
\newblock {\em Tail non-exchangeability}.
\newblock Northern Illinois University.

\bibitem[Pramanik, 2020a]{pramanik2020optimization}
Pramanik, P. (2020a).
\newblock Optimization of market stochastic dynamics.
\newblock {\em SN Operations Research Forum}, 1(4):31.

\bibitem[Pramanik, 2020b]{pramanik2020}
Pramanik, P. (2020b).
\newblock Optimization of market stochastic dynamics.
\newblock In {\em SN Operations Research Forum}, volume~1, pages 1--17.
  Springer.

\bibitem[Pramanik, 2021a]{pramanik2021}
Pramanik, P. (2021a).
\newblock Effects of water currents on fish migration through a feynman-type
  path integral approach under $\sqrt {8/3}$ liouville-like quantum gravity
  surfaces.
\newblock {\em Theory in Biosciences}, 140(2):205--223.

\bibitem[Pramanik, 2021b]{pramanik2021thesis}
Pramanik, P. (2021b).
\newblock {\em Optimization of Dynamic Objective Functions Using Path
  Integrals}.
\newblock PhD thesis, Northern Illinois University.

\bibitem[Pramanik, 2022a]{pramanik2022lock}
Pramanik, P. (2022a).
\newblock On lock-down control of a pandemic model.
\newblock {\em arXiv preprint arXiv:2206.04248}.

\bibitem[Pramanik, 2022b]{pramanik2022stochastic}
Pramanik, P. (2022b).
\newblock Stochastic control of a sir model with non-linear incidence rate
  through euclidean path integral.
\newblock {\em arXiv preprint arXiv:2209.13733}.

\bibitem[Pramanik, 2023a]{pramanik2021consensus}
Pramanik, P. (2023a).
\newblock Consensus as a nash equilibrium of a stochastic differential game.
\newblock {\em European Journal of Statistics}, 3:10--10.

\bibitem[Pramanik, 2023b]{pramanik2023cmbp}
Pramanik, P. (2023b).
\newblock Optimal lock-down intensity: A stochastic pandemic control approach
  of path integral.
\newblock {\em Computational and Mathematical Biophysics}, 11(1):20230110.

\bibitem[Pramanik, 2023c]{pramanik2023cont}
Pramanik, P. (2023c).
\newblock Path integral control in infectious disease modeling.
\newblock {\em arXiv preprint arXiv:2311.02113}.

\bibitem[Pramanik, 2023d]{pramanik2023path}
Pramanik, P. (2023d).
\newblock Path integral control of a stochastic multi-risk sir pandemic model.
\newblock {\em Theory in Biosciences}, pages 1--36.

\bibitem[Pramanik, 2024a]{pramanik2024dependence}
Pramanik, P. (2024a).
\newblock Dependence on tail copula.
\newblock {\em J}, 7(2):127--152.

\bibitem[Pramanik, 2024b]{pramanik2024estimation}
Pramanik, P. (2024b).
\newblock Estimation of optimal lock-down and vaccination rate of a stochastic
  sir model: A mathematical approach.
\newblock {\em European Journal of Statistics}, 4:3--3.

\bibitem[Pramanik, 2024c]{pramanik2024measuring}
Pramanik, P. (2024c).
\newblock Measuring asymmetric tails under copula distributions.
\newblock {\em European Journal of Statistics}, 4:7--7.

\bibitem[Pramanik, 2024d]{pramanik2024stochastic}
Pramanik, P. (2024d).
\newblock Stochastic control in determining a soccer player’s performance.
\newblock {\em J. Compr. Pure Appl. Math}, 2:111.

\bibitem[Pramanik, 2025a]{pramanik2025optimal}
Pramanik, P. (2025a).
\newblock An optimal level of stubbornness to win a soccer match.
\newblock {\em arXiv preprint arXiv:2501.18050}.

\bibitem[Pramanik, 2025b]{pramanik2025stubbornness}
Pramanik, P. (2025b).
\newblock Stubbornness as control in professional soccer games: A bppsde
  approach.
\newblock {\em Mathematics}, 13(3):475.

\bibitem[Pramanik et~al., 2024]{pramanik2024parametric}
Pramanik, P., Boone, E.~L., and Ghanam, R.~A. (2024).
\newblock Parametric estimation in fractional stochastic differential equation.
\newblock {\em Stats}, 7(3):745.

\bibitem[Pramanik and Dong, 2025a]{pramanik2025impact}
Pramanik, P. and Dong, L. (2025a).
\newblock Impact of random monetary shock: a keynesian case.
\newblock {\em arXiv preprint arXiv:2505.00800}.

\bibitem[Pramanik and Dong, 2025b]{pramanik2025strategic}
Pramanik, P. and Dong, L. (2025b).
\newblock Strategic complementarities due to monetary shock under sticky price.
\newblock {\em European Journal of Statistics}, 5:9--9.

\bibitem[Pramanik et~al., 2025a]{pramanik2025factors}
Pramanik, P., Graff, J., and Decaro, M. (2025a).
\newblock On factors influencing consumer preference in pipeline stages: an
  experiment.
\newblock {\em arXiv preprint arXiv:2501.03418}.

\bibitem[Pramanik et~al., 2025b]{pramanik2025strategies}
Pramanik, P., Graff, J., and Decaro, M. (2025b).
\newblock Strategies to increase pipeline status: A case study from eclinical
  data.
\newblock {\em European Journal of Statistics}, 5:3--3.

\bibitem[Pramanik and Maity, 2024]{pramanik2024bayes}
Pramanik, P. and Maity, A.~K. (2024).
\newblock Bayes factor of zero inflated models under jeffereys prior.
\newblock {\em arXiv preprint arXiv:2401.03649}.

\bibitem[Pramanik and Polansky, 2020]{pramanik2020motivation}
Pramanik, P. and Polansky, A.~M. (2020).
\newblock Motivation to run in one-day cricket.
\newblock {\em arXiv preprint arXiv:2001.11099}.

\bibitem[Pramanik and Polansky, 2021]{pramanik2021optimala}
Pramanik, P. and Polansky, A.~M. (2021).
\newblock Optimal estimation of brownian penalized regression coefficients.
\newblock {\em arXiv preprint arXiv:2107.02291}.

\bibitem[Pramanik and Polansky, 2023a]{pramanik2023optimization001}
Pramanik, P. and Polansky, A.~M. (2023a).
\newblock Optimization of a dynamic profit function using euclidean path
  integral.
\newblock {\em SN Business \& Economics}, 4(1):8.

\bibitem[Pramanik and Polansky, 2023b]{pramanik2021scoring}
Pramanik, P. and Polansky, A.~M. (2023b).
\newblock Scoring a goal optimally in a soccer game under liouville-like
  quantum gravity action.
\newblock {\em Operations Research Forum}, 4(3):66.

\bibitem[Pramanik and Polansky, 2023c]{pramanik2023semicooperation}
Pramanik, P. and Polansky, A.~M. (2023c).
\newblock Semicooperation under curved strategy spacetime.
\newblock {\em The Journal of Mathematical Sociology}, pages 1--35.

\bibitem[Pramanik and Polansky, 2024]{pramanik2024motivation}
Pramanik, P. and Polansky, A.~M. (2024).
\newblock Motivation to run in one-day cricket.
\newblock {\em Mathematics}, 12(17):2739.

\bibitem[Sharrock et~al., 2021]{sharrock2021parameter}
Sharrock, L., Kantas, N., Parpas, P., and Pavliotis, G.~A. (2021).
\newblock Parameter estimation for the mckean-vlasov stochastic differential
  equation.
\newblock {\em arXiv preprint arXiv:2106.13751}.

\bibitem[Sharrock et~al., 2023]{sharrock2023online}
Sharrock, L., Kantas, N., Parpas, P., and Pavliotis, G.~A. (2023).
\newblock Online parameter estimation for the mckean--vlasov stochastic
  differential equation.
\newblock {\em Stochastic Processes and their Applications}, 162:481--546.

\bibitem[Sheng, 2020]{sheng2020}
Sheng, S. (2020).
\newblock A structural econometric analysis of network formation games through
  subnetworks.
\newblock {\em Econometrica}, 88(5):1829--1858.

\bibitem[Snijders, 2002]{snijders2002}
Snijders, T.~A. (2002).
\newblock Markov chain monte carlo estimation of exponential random graph
  models.
\newblock {\em Journal of Social Structure}, 3(2):1--40.

\bibitem[Stella et~al., 2013]{stella2013opinion}
Stella, L., Bagagiolo, F., Bauso, D., and Como, G. (2013).
\newblock Opinion dynamics and stubbornness through mean-field games.
\newblock In {\em 52nd IEEE Conference on Decision and Control}, pages
  2519--2524. IEEE.

\bibitem[Sznitman, 1991]{sznitman1991topics}
Sznitman, A.-S. (1991).
\newblock Topics in propagation of chaos.
\newblock {\em Ecole d’{\'e}t{\'e} de probabilit{\'e}s de Saint-Flour
  XIX—1989}, 1464:165--251.

\bibitem[Theodorou et~al., 2010]{theodorou2010}
Theodorou, E., Buchli, J., and Schaal, S. (2010).
\newblock Reinforcement learning of motor skills in high dimensions: A path
  integral approach.
\newblock In {\em Robotics and Automation (ICRA), 2010 IEEE International
  Conference on}, pages 2397--2403. IEEE.

\bibitem[Theodorou, 2011]{theodorou2011}
Theodorou, E.~A. (2011).
\newblock {\em Iterative path integral stochastic optimal control: Theory and
  applications to motor control}.
\newblock University of Southern California.

\bibitem[Valdez and Pramanik, 2025a]{valdez2025association}
Valdez, I. and Pramanik, P. (2025a).
\newblock Association between obesity, race, and luminal subtypes of breast
  cancer.
\newblock {\em European Journal of Statistics}, 5:12--12.

\bibitem[Valdez and Pramanik, 2025b]{valdez2025exploring}
Valdez, I. and Pramanik, P. (2025b).
\newblock Exploring the interplay of adiposity, ethnicity, and hormone receptor
  profiles in breast cancer subtypes.
\newblock {\em arXiv preprint arXiv:2507.21348}.

\bibitem[Vikramdeo et~al., 2024a]{vikramdeo2024abstract}
Vikramdeo, K., Anand, S., Sudan, S., Pramanik, P., Singh, S., Godwin, A.,
  Singh, A., and Dasgupta, S. (2024a).
\newblock Abstract po3-16-05: Mitochondrial dna mutation detection in tumors
  and circulating extracellular vesicles of triple negative breast cancer
  patients for biomarker development.
\newblock {\em Cancer Research}, 84(9\_Supplement):PO3--16.

\bibitem[Vikramdeo et~al., 2024b]{vikramdeo2024mitochondrial}
Vikramdeo, K., Anand, S., Sudan, S., Pramanik, P., Singh, S., Godwin, A.,
  Singh, A., and Dasgupta, S. (2024b).
\newblock Mitochondrial dna mutation detection in tumors and circulating
  extracellular vesicles of triple negative breast cancer patients for
  biomarker development.
\newblock In {\em CANCER RESEARCH}, volume~84. AMER ASSOC CANCER RESEARCH 615
  CHESTNUT ST, 17TH FLOOR, PHILADELPHIA, PA~….

\bibitem[Vikramdeo et~al., 2023]{vikramdeo2023profiling}
Vikramdeo, K.~S., Anand, S., Sudan, S.~K., Pramanik, P., Singh, S., Godwin,
  A.~K., Singh, A.~P., and Dasgupta, S. (2023).
\newblock Profiling mitochondrial dna mutations in tumors and circulating
  extracellular vesicles of triple-negative breast cancer patients for
  potential biomarker development.
\newblock {\em FASEB BioAdvances}, 5(10):412.

\bibitem[Yusuf and Pramanik, 2025a]{yusuf2025predictive}
Yusuf, A. and Pramanik, P. (2025a).
\newblock Predictive significance of cd276/b7-h3 expression in baseline
  biopsies of advanced prostate carcinoma.
\newblock {\em arXiv preprint arXiv:2508.09373}.

\bibitem[Yusuf and Pramanik, 2025b]{yusuf2025prognostic}
Yusuf, A. and Pramanik, P. (2025b).
\newblock Prognostic role of b7-h3 (cd276) expression in initial biopsies of
  metastatic prostate cancer.
\newblock {\em Onco}, 5(3):38.

\end{thebibliography}
\end{document}